\newtheorem{theorem}{Theorem}[section]
\newtheorem{proposition}[theorem]{Proposition}
\newtheorem{nota}[theorem]{Notation}
\newtheorem{corollary}[theorem]{Corollary}
\newtheorem{lemma}[theorem]{Lemma}
\newtheorem{conj}[theorem]{Conjecture}
\newtheorem{mydef}[theorem]{Definition}
\newtheorem{claim}[theorem]{Claim}
\newtheorem{question}[theorem]{Question}
\newtheorem{obs}[theorem]{Observation}
\title{Small infinite partitions and other features of the Nowhere Centered ideal}
\author{Mario Jardon Santos}
\date{}
\begin{document}
\maketitle

\begin{abstract}
    The \textit{nowhere dense ideal} $\mathcal{NC}$ is introduced. It is a coanalytic ideal of $\omega\times\omega$ whose defining characteristic is that the sets of the form $X\times Y$, where $X,Y$ are infinite subsets of $\omega$, are dense in the quotient $\mathcal{P}(\omega\times\omega)\slash\mathcal{NC}$. This quotient has countable partitions and consistently has partitions of size $\omega_1$ while $\mathfrak{a}>\omega_1$. This represents a huge contrast with other definable quotients. Other combinatorial features of this ideal are presented, as well as some results on a family of similar, higher dimensional ideals. 
\end{abstract}

\section{Introduction}

A subfamily $\mathcal{I}\subseteq \mathcal{P}(X)$, the power set of some countable set $X$, is called an ideal if \begin{enumerate}
    \item $X\notin\mathcal{I}$,
    
    \item $A\cup B\in\mathcal{I}$ for all $A,B\in\mathcal{I}$, and 
    
    \item $A\in\mathcal{I}$ whenever $A\subseteq B$ and $B\in\mathcal{I}$. 
\end{enumerate} Ideals on countable sets play an important role in infinite combinatorics. In one hand we have the quotients modulo some ideal\footnote{The reader is referred to \cite{handbook} for general concepts and notations on Boolean algebras.}: If $\mathcal{I}\subseteq \mathcal{P}(X)$ is an ideal, for some countable set $X$, define on $\mathcal{P}(X)$ the equivalence relation $\sim_{\mathcal{I}}$, where $A\sim_{\mathcal{I}}B$ if their symmetric difference $A\triangle B\in\mathcal{I}$, for all $A,B\subseteq X$. Then on the set of equivalence classes $\lbrace[A]_{\mathcal{I}}\mid A\subseteq X\rbrace$ define the operations $[A]_{\mathcal{I}}\vee[B]_{\mathcal{I}}:=[A\cup B]_{\mathcal{I}}$, $[A]_{\mathcal{I}}\wedge[B]_{\mathcal{I}}:=[A\cap B]_{\mathcal{I}}$ and $-[A]_{\mathcal{I}}:=[X\setminus A]_{\mathcal{I}}$. It is not hard to see that these operations are well defined and that define a Boolean algebra on the set of equivalence classes of $\sim_\mathcal{I}$, denoted $\mathcal{P}(X)\slash\mathcal{I}$. When studying this structure the notation $[A]_\mathcal{I}$ is mostly avoided and positive elements of $\mathcal{P}(X)\slash\mathcal{I}$ are identified with the elements of $\mathcal{I}^+ :=\mathcal{P}(X)\setminus\mathcal{I}$, as well as the Boolean operations are identified with those of $\mathcal{P}(X)$. The order relation of $\mathcal{P}(X)\slash\mathcal{I}$ being important and not as directly induced by $\subseteq$ as the Boolean operations, its interpretation on $\mathcal{I}^+$ is highlighted: $A\subseteq_{\mathcal{I}}B$ iff $[A]_\mathcal{I} \leq[B]_{\mathcal{I}}$ iff $A\setminus B\in\mathcal{I}$, for all $A,B\in\mathcal{I}^+$.  

Capital among these structures is the case when $X=\omega$, $\mathcal{I}=fin:=[\omega]^{<\omega}$, and $\subseteq^* :=\subseteq_{fin}$. Many combinatorial substructures of $\mathcal{P}(\omega)\slash fin$ and their related cardinal invariants have been extrapolated to general quotients. A short list of some of them, relevant to this text, is given here.

\begin{mydef}
\label{cardinal_invariants_quotient}
Take an ideal $\mathcal{I}\subseteq \mathcal{P}(X)$, of some countable set $X$, and $A,B\in\mathcal{I}^+$.

\begin{enumerate}
    \item If $A\cap B\in\mathcal{I}$, it will be said that $A$ and $B$ are $\mathcal{I}$-almost disjoint. A subfamily of $\mathcal{I}^+$ consisting of pairwise $\mathcal{I}$-almost disjoint sets is called an $\mathcal{I}$-ad family. If said family is maximal with this property it will be called an $\mathcal{I}$-mad family or an infinite partition of $\mathcal{P}(X)\slash\mathcal{I}$. The cardinal $\mathfrak{a}(\mathcal{I})$ is defined as the smallest size of an infinite $\mathcal{I}$-mad family.
    
    \item If $\mathcal{T}\subseteq\mathcal{I}^+$ is a well-ordered substructure of $(\mathcal{I}^+ ,\supseteq_{\mathcal{I}})$ with no lower bound in $\mathcal{I}^+$, it will be called an $\mathcal{I}$-tower. The cardinal $\mathfrak{t}(\mathcal{I})$ is defined as the smallest size of an $\mathcal{I}$-tower.  
    
    \item If $A\cap B\in\mathcal{I}^+$ and $B\setminus A\in\mathcal{I}^+$, it will be said that $A$ splits $B$. The cardinal $\mathfrak{s}(\mathcal{I})$ is defined as the smallest size of a family $\mathcal{S}\subseteq\mathcal{I}^+$ such that for all $X\in\mathcal{I}^+$ there exists $Y\in\mathcal{S}$ such that $Y$ splits $X$.
    
    \item The cardinal $\mathfrak{r}(\mathcal{I})$ is defined as the smallest size of a subfamily $\mathcal{R}\subseteq\mathcal{I}^+$ such that no element of $\mathcal{I}^+$ splits all elements of $\mathcal{R}$. In this case $\mathcal{R}$ is called a reaping family. 
\end{enumerate}
\end{mydef}

All these cardinals are generalizations of the well-studied case when $\mathcal{I}=fin$. In that case no ideal is specified and the cardinal is identified by the corresponding fraktur font letter alone. For a general and basic understanding of this case the reader is referred to \cite{blass}. For results of both general and specific cases of these cardinal invariants, though mostly of $\mathfrak{a}(\mathcal{I})$, the reader is referred to \cite{steprans}, \cite{farkas} and \cite{el_juris_y_el_dilip}. For more general cases of infinite Boolean algebras a great resource is \cite{monk}. 

Another source of study for an ideal $\mathcal{I}$ on a countable set $X$ is seeing it as a combinatorial substructure of $\mathcal{P}(X)\slash[X]^{<\omega}$. There the following cardinal invariants arise.

\begin{mydef}
\label{cichon}

  Let $\mathcal{I}$ be an ideal on the countable set $X$. Define  
    
    \begin{itemize}
        \item (Additivity) $add^* (\mathcal{I}):=\min\lbrace\vert\mathcal{F}\vert\mid\mathcal{F}\subseteq\mathcal{I}~\neg\exists I\in\mathcal{I}~\forall A\in\mathcal{F}~A\nsubseteq^* I\rbrace$
        
        \item (Covering) $cov^* (\mathcal{I}):=\min\lbrace\vert\mathcal{F}\vert\mid\mathcal{F}\subseteq\mathcal{I}~\forall B\in[X]^\omega ~\exists A\in\mathcal{F}~\vert B\cap A\vert=\omega\rbrace$
        
        \item (Uniformity) $non^* (\mathcal{I}):=\min\lbrace\vert\mathcal{X}\vert\mid\mathcal{X}\subseteq[X]^\omega ~\neg\exists A\in\mathcal{I}~\forall B\in\mathcal{X}~\vert A\cap B\vert=\omega\rbrace$
        
        \item (Cofinality) $cof^* (\mathcal{I}):=\min\lbrace\vert\mathcal{F}\vert\mid\mathcal{F}\subseteq\mathcal{I}~\forall A\in\mathcal{I}~\exists B\in\mathcal{F}~A\subseteq^* B\rbrace$
    \end{itemize}
\end{mydef}

In Section \ref{quotient} of this text an ideal on $\omega\times\omega$, the \textit{Nowhere Centered ideal} $\mathcal{NC}$ is introduced and the structures of its quotient, as defined in Definition \ref{cardinal_invariants_quotient}, are studied, with great emphasis on $\mathfrak{a}(\mathcal{I})$ and infinite partitions. In section \ref{third_dimension} hints are given to higher dimensional generalizations of $\mathcal{NC}$, alongside an important result on infinite partitions on the corresponding quotients. Finally in Section \ref{add_cov_non_cof} the cardinal invariants of Definition \ref{cichon} relative to the ideal $\mathcal{NC}$ are studied. The reader is assumed to have a basic knowledge on concepts and notation of set theory and forcing. 

Since most of this work is set in $\omega\times\omega$, for $A\subseteq\omega\times\omega$ and $n<\omega$ set the notation $$A(n):=\lbrace m<\omega\mid(n,m)\in A\rbrace.$$ Also, for $X\subseteq\omega$ and $\lbrace Y_n \mid n\in X\rbrace\subseteq \mathcal{P}(\omega)$ define $$\coprod_{n\in X}Y_n :=\bigcup_{n\in X}\lbrace n\rbrace\times Y_n .$$

\section{The Nowhere Centered Ideal and its quotient}
\label{quotient}

Recall that a family $\mathcal{X}\subseteq[\omega]^\omega$ is said to be \textit{centered}, if $\bigcap F$ is infinite for all $F\in[\mathcal{X}]^{<\omega}$. From this almost natural concept in infinite combinatorics an ideal on $\omega\times\omega$ is defined. 

\begin{mydef} 
\label{nci}

The Nowhere Centered ideal is defined on $\omega\times\omega$ as follows
$$\mathcal{NC}:=\lbrace A\subseteq\omega\times\omega\mid\forall X\in[\omega]^\omega ~the~set~\lbrace A(n) : n\in X \rbrace~is~not~centered\rbrace.$$
\end{mydef}

Obviously $\omega\times\omega\notin\mathcal{NC}$ and $\mathcal{NC}$ is closed under subsets. Take $A_0 , A_1 \subseteq\omega\times\omega$ and suppose that $A_0 \cup A_1 \notin\mathcal{NC}$. There exists $X\in[\omega]^\omega$ such that $\lbrace A_0 (n)\cup A_1 (n)\mid n\in X\rbrace$ is a centered family. If $\mathcal{U}$ is an ultrafilter extending this family, there exist $i<2$ and $X^\prime \in[X]^\omega$ such that $\lbrace A_i (n)\mid n\in X^\prime \rbrace$ is subset of $\mathcal{U}$, and hence is a centered family. Therefore $A_i \notin\mathcal{NC}$, and we conclude that $\mathcal{NC}$ is closed under finite unions, and hence an ideal. 

\begin{obs}
\label{nowhere}
If $X\in[\omega]^\omega$ and $A\in\mathcal{NC}$, then there exist $n<\omega$ and $Y\in[X]^\omega$ such that $\vert A(m)\cap Y\vert<\omega$, for all $m\geq n$. 
\end{obs}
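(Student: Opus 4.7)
The plan is to prove the observation by contradiction. Suppose the conclusion fails, so that for every $n<\omega$ and every $Y\in[X]^\omega$ there exists some $m\geq n$ with $\vert A(m)\cap Y\vert=\omega$. I would use this hypothesis as a diagonalization engine to manufacture an infinite set $Z\subseteq\omega$ for which $\lbrace A(m):m\in Z\rbrace$ is centered, in direct contradiction with $A\in\mathcal{NC}$ (via the definition applied to $Z$ itself, not to the original $X$).

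The construction goes by recursion. Set $Y_0:=X$, and suppose that $m_0<\cdots<m_{k-1}$ and an infinite $Y_k\in[X]^\omega$ have been produced so that $Y_k\subseteq\bigcap_{i<k}A(m_i)$. Apply the failing hypothesis to $n=m_{k-1}+1$ (or $n=0$ at the base step) and to $Y=Y_k$ to obtain $m_k\geq n$ with $A(m_k)\cap Y_k$ infinite, and put $Y_{k+1}:=A(m_k)\cap Y_k$. Since $Y_{k+1}\subseteq Y_k\subseteq X$ is infinite, the recursion continues. The resulting sequence $m_0<m_1<\cdots$ enumerates an infinite $Z\subseteq\omega$, and by construction $Y_{k+1}\subseteq\bigcap_{i\leq k}A(m_i)$, so every finite intersection of the family $\lbrace A(m_k):k<\omega\rbrace$ is infinite; that is, the family is centered, contradicting the definition of $\mathcal{NC}$.

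There is really no serious obstacle here: the statement is a straightforward unpacking of the defining property of $\mathcal{NC}$ via iterated shrinking. The only small point to verify is that the chain of witnesses $Y_k$ stays inside $X$, which is automatic from $Y_0=X$ and $Y_{k+1}\subseteq Y_k$, so that the failing hypothesis can legitimately be reapplied at each stage. Note that it is not required that $Z\subseteq X$; what is needed to contradict $A\in\mathcal{NC}$ is only $Z\in[\omega]^\omega$.
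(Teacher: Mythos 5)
Your proof is correct. It takes a genuinely different (and slightly more elementary) route than the paper's. The paper first isolates a threshold $n<\omega$ such that $X\setminus\bigcup_{m\in F}A(m)$ is infinite for \emph{every} finite $F\subseteq\omega\setminus n$, arguing that otherwise an ultrafilter containing $X$ would select, for each $n$, some column $A(m)$ with $m\in F_n\subseteq\omega\setminus n$ belonging to the ultrafilter, giving a centered family indexed by an infinite set; only afterwards does it diagonalize inside $X$ to extract $Y$. You instead negate the statement as a whole and run the recursive shrinking $Y_{k+1}:=A(m_k)\cap Y_k$ to manufacture the centered family $\lbrace A(m_k)\mid k<\omega\rbrace$ directly, with no ultrafilter. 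Both arguments bottom out in the same contradiction with Definition \ref{nci}; yours avoids choice-of-ultrafilter machinery entirely, while the paper's yields the mildly stronger intermediate fact that a single $n$ works uniformly for all finite $F\subseteq\omega\setminus n$, which is what makes the subsequent construction of $Y$ ``an easy task.'' Your closing remark that the index set $Z=\lbrace m_k\mid k<\omega\rbrace$ need not be a subset of $X$ is exactly the right point to flag: Definition \ref{nci} quantifies over all infinite index sets, so applying it to $Z$ is legitimate, and each $A(m_k)$ is indeed infinite (since $A(m_k)\cap Y_k$ is), so the family lies in $[\omega]^\omega$ as the definition of centeredness requires.
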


Indeed we claim that there exists $n<\omega$ such that $\vert X\setminus\bigcup_{m\in F}A(m)\vert=\omega$, for all $F\in[\omega\setminus n]^{<\omega}$. Otherwise for all $n<\omega$ there exists $F\in[\omega\setminus n]^{<\omega}$ such that $X\subseteq^* \bigcup_{m\in F}A(m)$. But hence any ultrafilter containing $X$ would give us  $B\in[\omega]^\omega$ such that $\lbrace A(m)\mid m\in B\rbrace$ is a centered family, which is a contradiction. Given such $n<\omega$, getting such $Y\in[X]^\omega$ is an easy task.

One could see Observation \ref{nowhere} as a \textit{literary justification} for the naming of the ideal $\mathcal{NC}$, since it has a similar flavour as the observation that whenever we have $\sigma\in\omega^{<\omega}$ and a subtree $T\subseteq\omega^{<\omega}$ whose branches form a nowhere dense set of $\omega^\omega$, then there exists $\tau\in\omega^{<\omega}$, an extension of $\sigma$, such that $\langle\tau\rangle\cap T=\emptyset$. In this case given the rectangle $\omega\times X$ and $A\in\mathcal{NC}$ we get an ``extension" of the rectangle which ``avoids" $A$. A measure of this avoidance as well as a known face to compare $\mathcal{NC}$ to is given by the following ideal.

\begin{mydef}
 $fin\times fin:=\lbrace A\subseteq\omega\times\omega\mid\exists n<\omega~\forall m\geq n~\vert A(m)\vert<\omega\rbrace$.
\end{mydef}

 The ideal $fin\times fin$ is often thought of as the ideal on $\omega\times\omega$ generated by the columns $\lbrace n\rbrace\times\omega$, for $n<\omega$, and the sets below a function $\lbrace(n,m)\in\omega\times\omega\mid m\leq f(n)\rbrace$, for $f\in\omega^\omega$. Since for $A\in fin\times fin$ and $X\in[\omega]^\omega$ there exists $n$ such that $$\bigcap_{i\in X\cap n} A(i)<\omega,$$ it easily follows that $fin\times fin\subseteq\mathcal{NC}$. This inequality is strict. Indeed, if $\lbrace Y_n \mid n<\omega\rbrace\subseteq [\omega]^\omega$ is an almost disjoint family, the set $$Y:=\coprod_{n<\omega}Y_n$$ is an element of $\mathcal{NC}$ which does not lie in $fin\times fin$.

For getting to know the quotient $\mathcal{P}(\omega\times\omega)\slash\mathcal{NC}$ take $A\in\mathcal{NC}^+$. By definition there exists $X\in[\omega]^\omega$ such that $$\lbrace A(n)\mid n\in X\rbrace$$ is a centered family. If $Y\in[\omega]^\omega$ is a pseudointersection of said family, it follows that $$X\times Y\subseteq_{fin\times fin} A,$$ which means that $$\qquad\qquad\qquad X\times Y\subseteq_{\mathcal{NC}} A. \qquad \qquad \qquad (\star)$$ Observe that $Y$ is as much a key in determining $A\in\mathcal{NC}^+$ as $X$ is. Therefore $\mathcal{NC}^+$ is the second coordinate projection of the subset of $[\omega]^\omega \times \mathcal{P}(\omega\times\omega)$ consisting of all $(Y,A)$ satisfying the Borel formula $$\forall n<\omega\exists m\geq n\exists k<\omega\forall l\geq k(l\in Y\Rightarrow l\in A(m)),$$ which means there exist infinite $m<\omega$ such that $Y\subseteq^* A(m)$. It follows that the ideal $\mathcal{NC}$ is coanalytic.\footnote{Since $\mathcal{P}(\omega\times\omega)$ has a natural topology induced by that of the Polish space $2^{\omega\times\omega}$, its ideals can be studied by its topological properties. For elements of descriptive set theory see \cite{kechris}.}

From $(\star)$ it also follows that the set $$\qquad\qquad\lbrace[X\times Y]_{\mathcal{NC}}\mid X,Y\in[\omega]^\omega\rbrace \qquad\quad \circledast$$ is dense in $\mathcal{P}(\omega\times\omega)\slash\mathcal{NC}$. This important fact will be heavily used all through this section. This means that $\mathcal{P}(\omega)\slash fin\oplus \mathcal{P}(\omega)\slash fin$ is (isomorphic to) a dense subalgebra of $\mathcal{P}(\omega\times\omega)\slash\mathcal{NC}. $\footnote{
$\mathcal{P}(\omega)\slash fin\oplus \mathcal{P}(\omega)\slash fin$ is isomorphic to the clopen sets of $(\beta\omega\setminus\omega)^2$, where $\beta\omega\setminus\omega$ is the space of free ultrafilters on $\omega$. Its non-zero elements are of the form $\bigcup_{i<n}\widetilde{X}_i \times\widetilde{Y}_i$, where each $\widetilde{X}_i$ and $\widetilde{Y}_i$ are the sets of free ultrafilters containing $X_i$ and $Y_i$, some infinite subsets of $\omega$. The very conception of $\mathcal{NC}$ derives from the search for a quotient approximating this free product, and so this relation is highlighted. All cardinal invariants like $\mathfrak{a}(2)$ in this text refer to cardinal invariants of this Boolean algebra.} It is also easy to see that $$\lbrace[\omega\times X]_{\mathcal{NC}}\mid X\in[\omega]^\omega \rbrace$$ is a (regular) subalgebra of $\mathcal{P}(\omega\times\omega)\slash\mathcal{NC}$ isomorphic to $\mathcal{P}(\omega)\slash fin$. 

There are several combinatorial ways in which the Boolean algebras $\mathcal{P}(\omega)\slash fin$, $\mathcal{P}(\omega)\slash fin\oplus \mathcal{P}(\omega)\slash fin$ and $\mathcal{P}(\omega\times\omega)\slash\mathcal{NC}$ are similar. 

\begin{theorem}
$\mathfrak{s}=\mathfrak{s}(2)=\mathfrak{s}(\mathcal{NC})$.
\end{theorem}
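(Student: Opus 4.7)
The plan is to prove the two equalities simultaneously by establishing the circular chain
\[
\mathfrak{s}(\mathcal{NC}) \leq \mathfrak{s}(2) \leq \mathfrak{s} \leq \mathfrak{s}(\mathcal{NC}).
\]
The first inequality will follow directly from $(\circledast)$: since $\mathcal{P}(\omega)/fin \oplus \mathcal{P}(\omega)/fin$ is a dense subalgebra of $\mathcal{P}(\omega\times\omega)/\mathcal{NC}$, any splitting family for the former automatically splits the latter (given $A \in \mathcal{NC}^+$, pick a positive rectangle $X \times Y \subseteq_{\mathcal{NC}} A$; a splitter $s$ of $X \times Y$ in the subalgebra has $s\wedge A$ and $A\setminus s$ both bounding $s \wedge (X\times Y)$ and $(X\times Y)\setminus s$ respectively, hence both positive).

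For $\mathfrak{s}(2) \leq \mathfrak{s}$, I will start with a splitting family $\mathcal{S} \subseteq [\omega]^\omega$ for $\mathcal{P}(\omega)/fin$ and consider $\{\widetilde{X} \otimes 1 : X \in \mathcal{S}\}$ inside $\mathcal{P}(\omega)/fin \oplus \mathcal{P}(\omega)/fin$. Density of the rectangles $\widetilde{A} \otimes \widetilde{B}$ reduces the verification to such rectangles, and for any rectangle with both sides infinite, an $X \in \mathcal{S}$ splitting $A$ makes $\widetilde{A \cap X} \otimes \widetilde{B}$ and $\widetilde{A \setminus X} \otimes \widetilde{B}$ both nonzero, so $\widetilde{X} \otimes 1$ splits $\widetilde{A} \otimes \widetilde{B}$.

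The delicate step is $\mathfrak{s} \leq \mathfrak{s}(\mathcal{NC})$. Given a splitting family $\mathcal{S} \subseteq \mathcal{NC}^+$ of cardinality $\kappa$, I plan to extract a splitting family of cardinality $\kappa$ from its rectangle approximants. For each $s \in \mathcal{S}$, fix via $(\circledast)$ infinite sets $X_s, Y_s$ with $X_s \times Y_s \subseteq_{\mathcal{NC}} s$, and when $(\omega\times\omega)\setminus s \in \mathcal{NC}^+$, also $X_s', Y_s'$ with $X_s' \times Y_s' \subseteq_{\mathcal{NC}} (\omega\times\omega)\setminus s$. The candidate splitting family is $\mathcal{T} := \{X_s, Y_s, X_s', Y_s' : s \in \mathcal{S}\}$. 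If some $B_0 \in [\omega]^\omega$ were unsplit by $\mathcal{T}$, each of these sets would be almost contained in or almost disjoint from $B_0$; I then aim to contradict the splitting of $B_0 \times B_0 \in \mathcal{NC}^+$ by any $s \in \mathcal{S}$, since in the ``$B_0\subseteq^* X_s\cap Y_s$'' case one obtains $B_0\times B_0\subseteq_{\mathcal{NC}}s$, and in the ``$B_0$ almost disjoint from $Y_s$'' case the intersection of $B_0\times B_0$ with the chosen rectangle is finite.

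The main obstacle is that the chosen rectangle $X_s \times Y_s$ is only one witness to $s \in \mathcal{NC}^+$, while $s$ may extend well beyond it; so from the dichotomy on $X_s, Y_s$ alone one cannot immediately conclude $(B_0 \times B_0) \cap s \in \mathcal{NC}$ or $(B_0 \times B_0) \setminus s \in \mathcal{NC}$. Overcoming this will likely require strengthening the extraction, for instance by enumerating a countable exhausting family of rectangles below each $s$ (keeping the size at $\kappa\cdot\aleph_0=\kappa$) or by a reaping-type argument producing a single $B_0$ that uniformly handles all such rectangles, invoking that the full collection of $Y$-coordinates of rectangles below the $s$'s still forms a family of size $\kappa<\mathfrak{s}$ whenever $\kappa<\mathfrak{s}$.
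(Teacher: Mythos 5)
Your first two inequalities are fine: $\mathfrak{s}(\mathcal{NC})\leq\mathfrak{s}(2)$ via the dense subalgebra $\circledast$ (a splitting family of a dense subalgebra splits the whole algebra) and $\mathfrak{s}(2)\leq\mathfrak{s}$ via the sets $\widetilde{X}\otimes 1$ are both correct, and together with the third inequality they would close the circle. (The paper instead quotes $\mathfrak{s}(A\oplus B)=\min\lbrace\mathfrak{s}(A),\mathfrak{s}(B)\rbrace$ for the first equality and proves $\mathfrak{s}(\mathcal{NC})\leq\mathfrak{s}$ directly by checking that $\lbrace X_\alpha\times X_\beta\rbrace$ splits every rectangle; the difference is cosmetic.)

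The gap is exactly where you flagged it, in $\mathfrak{s}\leq\mathfrak{s}(\mathcal{NC})$, and neither of your proposed repairs works as stated. A positive set $s$ need not admit a countable family of rectangles below it that ``exhausts'' it (and even if it did, you would still have to control the part of $(B_0\times B_0)\cap s$ lying outside all chosen rectangles), while the collection of \emph{all} $Y$-coordinates of rectangles below the members of $\mathcal{S}$ has size continuum, not $\kappa$. The countable-per-set data that makes the argument go through is not the rectangles below $s$ but the \emph{columns} $s(n)$, and the reaping has to be done in two stages. Concretely (this is the paper's argument, in contrapositive form): given $\kappa<\mathfrak{s}$ and $\lbrace A_\alpha\mid\alpha<\kappa\rbrace\subseteq\mathcal{NC}^+$, first pick $Y\in[\omega]^\omega$ unsplit by the family $\lbrace A_\alpha(n)\mid\alpha<\kappa,\ n<\omega\rbrace$, which has size $\kappa\cdot\omega=\kappa<\mathfrak{s}$. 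This decides, for \emph{every} $\alpha$ and \emph{every} column $n$, whether $Y\subseteq^* A_\alpha(n)$ or $\vert Y\cap A_\alpha(n)\vert<\omega$ --- precisely the uniform dichotomy your single rectangle $X_s\times Y_s$ cannot provide. Then set $X_\alpha:=\lbrace n\mid Y\subseteq^* A_\alpha(n)\rbrace$ and pick $X$ unsplit by $\lbrace X_\alpha\mid\alpha\in K\rbrace$ where $K$ is the set of $\alpha$ with $X_\alpha$ infinite. If $X\subseteq^* X_\alpha$ then $X\times Y\subseteq_{\mathcal{NC}}A_\alpha$; otherwise $\vert Y\cap A_\alpha(n)\vert<\omega$ for almost all $n\in X$, so $(X\times Y)\cap A_\alpha\in fin\times fin\subseteq\mathcal{NC}$. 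Either way $A_\alpha$ fails to split $X\times Y$, so the family is not splitting. Your outline as written does not reach this conclusion, so the third inequality remains unproved.
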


\begin{proof}
The first equality holds because $\mathfrak{s}(A\oplus B)=\min\lbrace\mathfrak{s}(A),\mathfrak{s}(B)\rbrace$ for all non-atomic Boolean algebras $A$ and $B$. This fact can be found in Theorem 10 of \cite{mio}. For proving $\mathfrak{s}\leq\mathfrak{s}(\mathcal{NC})$ first take $\kappa<\mathfrak{s}$ and $\lbrace A_\alpha \mid\alpha<\kappa\rbrace\subseteq\mathcal{NC}^+ $. Since the family $$\lbrace A_\alpha (n)\mid\alpha<\kappa,~n<\omega\rbrace$$ is not splitting in $\mathcal{P}(\omega)\slash fin$, take $Y\in[\omega]^\omega$ witnessing this fact. Define $$K:=\lbrace\alpha<\kappa\mid\exists^{\infty}n<\omega~(Y\subseteq^* A_\alpha (n))\rbrace.$$ For each $\alpha\in K$ consider the set $X_\alpha :=\lbrace n<\omega\mid Y\subseteq^* A_\alpha (n)\rbrace$. The family $\lbrace X_\alpha \mid\alpha\in K\rbrace$ is not splitting in $\mathcal{P}(\omega)\slash fin$. Take $X\in[\omega]^\omega$ witnessing this fact. The set $X\times Y$ witnesses that  $\lbrace A_\alpha \mid\alpha<\kappa\rbrace\subseteq\mathcal{NC}^+ $ is not splitting in $\mathcal{P}(\omega\times\omega)\slash\mathcal{NC}$. Indeed, take $\alpha<\kappa$. If $\alpha\in K$ and $X\subseteq^* X_\alpha$, then $X\times Y\subseteq_{\mathcal{NC}}A_\alpha$. If $\alpha\in K$ and $\vert X\cap X_\alpha \vert<\omega$, then $\vert Y \cap A_\alpha (n)\vert <\omega$ for almost all $n\in X$. On the other hand if $\alpha\notin K$, then $\vert Y \cap A_\alpha (n)\vert <\omega$ for almost all $n<\omega$. From both last cases it follows that $(X\times Y)\cap A_\alpha \in\mathcal{NC}$. We conclude that $\mathfrak{s}\leq\mathfrak{s}(\mathcal{NC})$.

Now take family $\lbrace X_\alpha \mid\alpha<\mathfrak{s}\rbrace\subseteq[\omega]^\omega$ splitting in $\mathcal{P}(\omega)\slash fin$. The family $$\mathcal{S}:=\lbrace X_\alpha \times X_\beta \mid\alpha,~\beta<\mathfrak{s}\rbrace$$ is splitting in $\mathcal{P}(\omega\times\omega)\slash\mathcal{NC}$. Take $A\in\mathcal{NC}^+$, which will be proved to be split by some element of $\mathcal{S}$. Since the set $\circledast$ is dense, w.l.o.g $A=X\times Y$ for some $X,~Y\in[\omega]^\omega$. Take $\alpha<\mathfrak{s}$ and $\beta<\mathfrak{s}$ such that $X_\alpha$ splits $X$ and $X_\beta$ splits $Y$. It follows that both $(X\times Y)\cap(X_\alpha \times X_\beta)\in\mathcal{NC}^+$ and $(X\times Y)\setminus(X_\alpha \times X_\beta)\in\mathcal{NC}^+$, i.e. $X_\alpha \times X_\beta$ splits $A$. We conclude that $\mathfrak{s}(\mathcal{NC})\leq\mathfrak{s}$. 
 
\end{proof}

Concerning reaping families in $\mathcal{P}(\omega\times\omega)\slash\mathcal{NC}$ matters are not as straightforward. Nevertheless non surprising bounds were found for $\mathfrak{r}(\mathcal{NC})$. A family $\mathcal{R}\subseteq[\omega]^\omega$ is said to be $\sigma$-\textit{reaping} if for all $\lbrace X_n \mid n<\omega\rbrace\subseteq[\omega]^\omega$ there exists $A\in\mathcal{R}$ such that for all $n<\omega$ either 
$A\subseteq^* X_n$ or $\vert A\cap X_n \vert<\omega$. The cardinal $\mathfrak{r}_\sigma$ is defined to be the smallest cardinality of a $\sigma$-reaping family. 

\begin{theorem}
$\mathfrak{r}\leq\mathfrak{r}(2)\leq\mathfrak{r}(\mathcal{NC})\leq\mathfrak{r}_\sigma$. 
\end{theorem}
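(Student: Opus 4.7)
The plan is to prove the three inequalities separately, all leveraging the density $\circledast$ of rectangles $X\times Y$ in $\mathcal{P}(\omega\times\omega)/\mathcal{NC}$ and the identification of the rectangle subalgebra with $\mathcal{P}(\omega)/fin\oplus\mathcal{P}(\omega)/fin$. The two left inequalities are routine refinement arguments; the real content lies in the right-hand one, where the $\sigma$-reaping hypothesis enters through the countable family of columns of $A$.

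For the left two inequalities the pattern is the same: given a reaping family $\mathcal{R}$ in the larger algebra, for each $r\in\mathcal{R}$ pick a single rectangle $X_r\times Y_r$ lying below $r$ (available by density of rectangles). For $\mathfrak{r}(2)\leq\mathfrak{r}(\mathcal{NC})$, the family $\{X_r\times Y_r : r\in\mathcal{R}\}$ should reap the rectangle subalgebra: given any $X\times Y$ positive in that subalgebra, some $r\in\mathcal{R}$ does not split it in the full quotient, and then $X_r\times Y_r\leq r$ inherits the property once one checks that complementation in the subalgebra agrees with complementation in $\mathcal{P}(\omega\times\omega)/\mathcal{NC}$ (which reduces to the observation that $X'\times Y'\in\mathcal{NC}$ iff one of $X',Y'$ is finite). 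For $\mathfrak{r}\leq\mathfrak{r}(2)$, project to the first coordinate: given $X\in[\omega]^\omega$, apply reaping in the free product to $X\otimes 1$, and transfer the resulting inequality through $X_r\otimes Y_r\leq r$ down to $\mathcal{P}(\omega)/fin$, forcing $X_r\subseteq^* X$ or $|X_r\cap X|<\omega$, so that $\{X_r : r\in\mathcal{R}\}$ reaps.

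For $\mathfrak{r}(\mathcal{NC})\leq\mathfrak{r}_\sigma$, let $\mathcal{R}\subseteq[\omega]^\omega$ be a $\sigma$-reaping family of size $\mathfrak{r}_\sigma$; I claim the product $\{X\times Y : X,Y\in\mathcal{R}\}$ is reaping in $\mathcal{P}(\omega\times\omega)/\mathcal{NC}$. Fix $A\in\mathcal{NC}^+$. The key move is to apply the $\sigma$-reaping property to the countable sequence of columns $\{A(n) : n<\omega\}$ (treating finite columns trivially) to obtain $Y\in\mathcal{R}$ such that for every $n$ either $Y\subseteq^* A(n)$ or $|Y\cap A(n)|<\omega$. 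Set $Z:=\{n<\omega : Y\subseteq^* A(n)\}$: if $Z$ is finite then $(\omega\times Y)\cap A\in fin\times fin$, so any $X\in\mathcal{R}$ does the job; otherwise apply the ordinary reaping property to $Z\in[\omega]^\omega$ to get $X\in\mathcal{R}$ with $X\subseteq^* Z$ or $|X\cap Z|<\omega$, yielding respectively $X\times Y\subseteq_{fin\times fin} A$ and $(X\times Y)\cap A\in fin\times fin$. In every case $X\times Y$ does not split $A$ modulo $\mathcal{NC}$. The only delicate point is the bookkeeping in this final case analysis, verifying column by column that each piece really lands in $fin\times fin\subseteq\mathcal{NC}$; the product family has a priori size $\mathfrak{r}_\sigma^2=\mathfrak{r}_\sigma$.
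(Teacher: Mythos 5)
Your proposal is correct and follows essentially the same route as the paper: the two left inequalities are handled by the density of rectangles and the embedding of $\mathcal{P}(\omega)\slash fin\oplus\mathcal{P}(\omega)\slash fin$ (the paper phrases them contrapositively, showing a non-reaping witness transfers upward, but the content is identical), and the right inequality is exactly the paper's argument — take the product family $\lbrace X\times Y\mid X,Y\in\mathcal{R}\rbrace$, $\sigma$-reap the columns of $A$, then reap the set $Z$ of columns almost containing $Y$. Your explicit treatment of the case $Z$ finite is a small point the paper glosses over; just fix the slip in the middle inequality where "some $r\in\mathcal{R}$ does not split it" should read "is not split by it," and note that candidate splitters in the free product are finite unions of rectangles, to which your transfer argument applies verbatim.
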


\begin{proof}
Take $\kappa<\mathfrak{r}$ and $\lbrace A_{\alpha} ,B_{\alpha}\mid\alpha<\kappa\rbrace\subseteq[\omega]^\omega$. Take $X,Y\in[\omega]^\omega$ such that $X$ splits $A_\alpha$ and $Y$ splits $B_\alpha$, for all $\alpha<\kappa$. Therefore $\widetilde{X}\times\widetilde{Y}$ splits (as clopen subset of $(\beta\omega\setminus\omega)^2$) $\widetilde{A}_\alpha \times\widetilde{B}_\alpha$, for all $\alpha<\kappa$. We conclude that $\kappa<\mathfrak{r}(2)$.

Take $\kappa<\mathfrak{r}(2)$ and $\lbrace A_\alpha \mid\alpha<\kappa\rbrace\subseteq\mathcal{NC}^+$. For $\alpha<\kappa$ take $X_\alpha ,~Y_\alpha \in[\omega]^\omega$ such that $(X_\alpha \times Y_\alpha )\subseteq_{\mathcal{NC}}A_\alpha$. There exist $C_0 ,...,C_{n-1},D_0 ,...,D_{n-1}\in[\omega]^\omega$, for some $n<\omega$, such that $$\widetilde{X}_\alpha \times\widetilde{Y}_\alpha \cap\bigcup_{i<n}\widetilde{C}_i \times\widetilde{D}_i\neq\emptyset\neq\widetilde{X}_\alpha \times\widetilde{Y}_\alpha \setminus\bigcup_{i<n}\widetilde{C}_i \times\widetilde{D}_i,$$ for all $\alpha<\kappa$. Therefore $$X_\alpha \times Y_\alpha \cap\bigcup_{i<n}C_i \times D_i \in\mathcal{NC}^+$$ and $$X_\alpha \times Y_\alpha \setminus\bigcup_{i<n} C_i \times D_i \in\mathcal{NC}^+,$$ for all $\alpha<\kappa$. We conclude that $\kappa<\mathfrak{r}(\mathcal{NC}) $.

Let $\mathcal{R}=\lbrace A_\alpha \mid\alpha<\mathfrak{r}_\sigma \rbrace\subseteq[\omega]^\omega$ be a $\sigma$-reaping family. Consider the family $\lbrace A_\alpha \times A_\beta \mid\alpha,\beta<\mathfrak{r}_\sigma \rbrace$ and take $B\in\mathcal{NC}^+$. Since infinitely many elements of $\lbrace B(n)\mid n<\omega\rbrace$ are infinite, there exists $\alpha<\mathfrak{r}_\sigma$ such that for all $n<\omega
$ either $A_\alpha \subseteq^* B(n)$ or $\vert A_\alpha \cap B(n)\vert<\omega$. Define $X:=\lbrace n<\omega \mid A_\alpha \subseteq^* B(n)\rbrace$. Since $\mathcal{R}$ is a reaping family, there exists $\beta<\mathfrak{r}_\sigma$ such that either $A_\beta \subseteq^* X$ or $\vert A_\beta \cap X\vert<\omega$. In the first case it follows that $A_\beta \times A_\alpha \subseteq_{\mathcal{NC}}B$; in the second one, that $(A_\beta \times A_\alpha)\cap B\in\mathcal{NC}$. In either case we conclude that $\lbrace A_\alpha \times A_\beta \mid\alpha,\beta<\mathfrak{r}_\sigma \rbrace$ is a reaping family in $\mathcal{P}(\omega\times\omega)\slash\mathcal{NC}$. Therefore $\mathfrak{r}(\mathcal{NC})\leq\mathfrak{r}_\sigma$. 
\end{proof}

Notwithstanding these results, considering towers and infinite partitions huge differences between the corresponding cardinal invariants of $\mathcal{P}(\omega)\slash fin$ and $\mathcal{P}(\omega\times\omega)\slash\mathcal{NC}$ begin to arise. On one hand any infinite partition of   $\mathcal{P}(\omega)\slash fin$ induces a partition of  $\mathcal{P}(\omega)\slash fin\oplus \mathcal{P}(\omega)\slash fin$, whose infinite partitions induce infinite partitions on $\mathcal{P}(\omega\times\omega)\slash\mathcal{NC}$. It follows that $$\mathfrak{a}(\mathcal{NC})\leq\mathfrak{a}(2)\leq\mathfrak{a}.$$ Nevertheless the left inequality is strict. This stems from the fact that the all countable subsets of $$\lbrace[\omega\times X]_{\mathcal{NC}}\mid X\in[\omega]^\omega \rbrace$$ 
have a supremum in $\mathcal{P}(\omega\times\omega)\slash\mathcal{NC}$. 

\begin{lemma}
\label{aI=omega}
If $\lbrace Y_n : n<\omega\rbrace\subseteq[\omega]^\omega ,$ then 

$$\coprod_{n<\omega}\bigcup_{i\leq n} Y_i = \bigvee_{n<\omega} \omega\times Y_n .$$
\end{lemma}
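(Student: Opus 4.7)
The plan is to verify two things, writing $Z:=\coprod_{n<\omega}\bigcup_{i\leq n}Y_i$: first that $[Z]_{\mathcal{NC}}$ is an upper bound for the family $\{[\omega\times Y_n]_{\mathcal{NC}}:n<\omega\}$, and then that it is the least such upper bound.

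The upper bound part is routine: for fixed $n<\omega$ and any $m\geq n$ we have $(\omega\times Y_n)(m)\setminus Z(m)=Y_n\setminus\bigcup_{i\leq m}Y_i=\emptyset$, so $(\omega\times Y_n)\setminus Z\subseteq n\times\omega\in fin\times fin\subseteq\mathcal{NC}$. Hence $\omega\times Y_n\subseteq_{\mathcal{NC}}Z$ for every $n$.

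For the least-upper-bound part, suppose $W\subseteq\omega\times\omega$ satisfies $\omega\times Y_n\subseteq_{\mathcal{NC}}W$ for all $n<\omega$, and assume toward a contradiction that $Z\setminus W\in\mathcal{NC}^+$. By the density of the rectangles in $\circledast$, pick $X,Y\in[\omega]^\omega$ with $X\times Y\subseteq_{fin\times fin}Z\setminus W$; in particular, for every $n\in X$ we have both $Y\subseteq^*Z(n)=\bigcup_{i\leq n}Y_i$ and $|Y\cap W(n)|<\omega$. Fix any $n_0\in X$; since $Y$ is infinite and almost contained in the finite union $\bigcup_{i\leq n_0}Y_i$, pigeonhole yields $i^*\leq n_0$ with $|Y\cap Y_{i^*}|=\omega$.

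Now apply Observation \ref{nowhere} to the $\mathcal{NC}$-set $(\omega\times Y_{i^*})\setminus W$ and the infinite set $Y\cap Y_{i^*}$: this delivers $n^*<\omega$ and $Y''\in[Y\cap Y_{i^*}]^\omega$ such that $|(Y_{i^*}\setminus W(m))\cap Y''|<\omega$ for every $m\geq n^*$, i.e.\ $Y''\subseteq^*W(m)$ for every $m\geq n^*$. Pick $m\in X$ with $m\geq n^*$; since $Y''\subseteq Y$, the first paragraph of this argument gives $|Y''\cap W(m)|<\omega$, while the previous sentence gives $|Y''\setminus W(m)|<\omega$, forcing $Y''$ to be finite—a contradiction. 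The main obstacle is precisely this last step: bridging the quotient assumption $\omega\times Y_{i^*}\subseteq_{\mathcal{NC}}W$ with the pointwise information $|Y\cap W(m)|<\omega$ for $m\in X$, which is exactly where Observation \ref{nowhere} is pulling its weight.
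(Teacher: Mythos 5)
Your proof is correct, but it takes a different route from the paper's. You verify the least-upper-bound property head-on: you take an arbitrary upper bound $W$, suppose $Z\setminus W\in\mathcal{NC}^+$, extract a rectangle $X\times Y\subseteq_{fin\times fin}Z\setminus W$, and then use Observation \ref{nowhere} applied to $(\omega\times Y_{i^*})\setminus W$ to manufacture an infinite $Y''$ that is simultaneously almost contained in and almost disjoint from some column $W(m)$. The paper instead uses the dual Boolean-algebra criterion: after checking $Z$ is an upper bound, it takes a positive rectangle $A\times B\subseteq_{\mathcal{NC}}Z$ and shows by a diagonalization (if $B\nsubseteq^*\bigcup_{i\leq n}Y_i$ for every $n$, one finds $B'\in[B]^\omega$ almost disjoint from all $Y_i$, contradicting positivity below $Z$) that $A\times B$ actually sits below a \emph{finite} union $\bigcup_{i\leq n}\omega\times Y_i$, hence meets some $\omega\times Y_i$; this suffices because in a Boolean algebra an upper bound is the supremum iff no positive element below it avoids all terms. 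The paper's version is shorter since it only ever manipulates rectangles below $Z$; yours is more literal (it never invokes the equivalence of the two supremum criteria) at the cost of having to unpack the relation $\omega\times Y_{i^*}\subseteq_{\mathcal{NC}}W$ for an arbitrary $W$, which is exactly the work Observation \ref{nowhere} does for you. Two cosmetic points: the containment $X\times Y\subseteq_{fin\times fin}Z\setminus W$ only gives your two column conditions for all but finitely many $n\in X$, so shrink $X$ first; and the final contradiction can be reached slightly faster by noting that the positive rectangle $X\times(Y\cap Y_{i^*})$ would be both $\mathcal{NC}$-almost contained in $W$ (via $\omega\times Y_{i^*}$) and $\mathcal{NC}$-almost disjoint from $W$ (via $Z\setminus W$).
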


\begin{proof}
It is immediate that $$\omega\times Y_n \subseteq_{\mathcal{NC}}\coprod_{n<\omega}\bigcup_{i\leq n}Y_i$$ for all $n<\omega$. Take $A\times B\subseteq_{\mathcal{NC}}\coprod_{n<\omega}\bigcup_{i\leq n}Y_i$, with $A,B\in[\omega]^\omega$. If $B\nsubseteq^* \bigcup_{i\leq n}Y_i$ for all $n<\omega$, then there exists $B^\prime \in[B]^\omega$ such that $\vert B^\prime \cap Y_n \vert<\omega$ for all $n<\omega$, but this is clearly a contradiction. Hence there exists $n<\omega$ such that $B\subseteq^* \bigcup_{i\leq n}Y_i$. It follows that $$A\times B\subseteq_{\mathcal{NC}}\bigcup_{i\leq n}\omega\times Y_i .$$ Since every lower bound of $\coprod_{n<\omega}\bigcup_{i\leq n}Y_i$ is met by $\omega\times Y_n$ for some $n<\omega$, the lemma follows.

\end{proof}

It analogously follows that for any countable family $\lbrace X_n \mid n<\omega\rbrace\subseteq[\omega]^\omega$, the set $\coprod_{n<\omega}\bigcap_{i\leq n}X_i$ is the infimum of the family $\lbrace \omega\times X_n \mid n<\omega\rbrace$. Observe that also if $\lbrace X_n \mid n<\omega\rbrace\subseteq[\omega]^\omega$ is an increasing (decreasing) family the set $\coprod_{n<\omega}X_n$ is the supremum (infimum) of the family $\lbrace \omega\times X_n \mid n<\omega\rbrace$.

There are several other instances where a countable family $\lbrace A_n \mid n<\omega\rbrace\subseteq\mathcal{NC}^+$ has supremum in $\mathcal{P}(\omega\times\omega)\slash\mathcal{NC}$, e.g. if there exists $\lbrace X_n \mid n<\omega\rbrace$ a partition of $\omega$ such that $A_n (m)\subseteq^* X_n$ for all $n,m<\omega$. It is also worth noticing, as it will be proved later, that not all countable subfamilies of $\mathcal{P}(\omega\times\omega)\slash\mathcal{NC}$ have supremum. Nevertheless Lemma \ref{aI=omega} proves to be strong enough for  getting interesting results. 

\begin{corollary}
$\mathfrak{a}(\mathcal{NC})=\omega$.
\end{corollary}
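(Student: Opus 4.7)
The plan is to exhibit an explicit countable maximal $\mathcal{NC}$-almost disjoint family; combined with the trivial lower bound $\mathfrak{a}(\mathcal{NC})\geq\omega$, this yields the equality. I fix a partition $\{Y_n : n<\omega\}$ of $\omega$ into infinite sets and first consider the family $\{\omega\times Y_n : n<\omega\}$. By Lemma \ref{aI=omega}, this family has a supremum in $\mathcal{P}(\omega\times\omega)/\mathcal{NC}$, namely $S := \coprod_{n<\omega}\bigcup_{i\leq n}Y_i$. Its set-theoretic complement $C := \coprod_{n<\omega}\bigcup_{i>n}Y_i$ has rows $\bigcup_{i>n}Y_i$ forming a decreasing chain of infinite sets, which is therefore centered, so $C\in\mathcal{NC}^+$. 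The idea is that the $\omega\times Y_n$'s fall short of exhausting the top element of the quotient, and $C$ is precisely the one extra piece needed to complete them to a maximal family.

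My candidate is $\mathcal{A}:=\{\omega\times Y_n : n<\omega\}\cup\{C\}$. Pairwise almost disjointness is immediate: the $\omega\times Y_n$'s are disjoint as sets since the $Y_n$ partition $\omega$, and $C\cap(\omega\times Y_k)$ has only the rows $n<k$ nonempty (each equal to $Y_k$), so it lies in $fin\times fin\subseteq\mathcal{NC}$. For maximality I take $B\in\mathcal{NC}^+$ almost disjoint from every member of $\mathcal{A}$ and aim for a contradiction. From $B\cap(\omega\times Y_n)\in\mathcal{NC}$, i.e.\ $\omega\times Y_n\subseteq_{\mathcal{NC}}(\omega\times\omega)\setminus B$ for every $n$, the upper-bound property of the supremum yields $S\subseteq_{\mathcal{NC}}(\omega\times\omega)\setminus B$, hence $B\cap S\in\mathcal{NC}$. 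Combined with $B\cap C\in\mathcal{NC}$ and the set identity $S\cup C=\omega\times\omega$, this forces $B\in\mathcal{NC}$, contradicting $B\in\mathcal{NC}^+$.

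The main obstacle is conceptual rather than computational: one must recognize that the natural family $\{\omega\times Y_n\}$ does \emph{not} partition the top of the quotient, and identify the extra generator $C$. Lemma \ref{aI=omega} is essential here precisely because it both guarantees that the supremum of the $\omega\times Y_n$'s exists and gives the explicit formula that reveals the nontrivial residue $C$. Once $C$ is in hand, the verification of maximality is a short argument via the universal property of the supremum.
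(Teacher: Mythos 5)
Your proposal is correct and is essentially identical to the paper's proof: the paper takes a partition $\{A_n\}$ of $\omega$ and declares $\{\omega\times A_n : n<\omega\}\cup\{\omega\times\omega\setminus\bigvee_{n<\omega}\omega\times A_n\}$ to be the desired mad family, which is exactly your $\{\omega\times Y_n\}\cup\{C\}$ via Lemma \ref{aI=omega}. You have merely written out the verifications (positivity of $C$, pairwise almost disjointness, and maximality via the universal property of the supremum) that the paper compresses into the word ``clearly.''
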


\begin{proof}
Take $\lbrace A_n \mid n<\omega\rbrace\subseteq[\omega]^\omega$ a partition of $\omega$. Then $$\lbrace\omega\times A_n \mid n<\omega\rbrace\cup\lbrace\omega\times\omega\setminus\bigvee_{n<\omega}\omega\times A_n \rbrace$$ is clearly an $\mathcal{NC}$-mad family witnessing this statement.
\end{proof}

Furthermore we have that $\mathfrak{t}(\mathcal{NC})=\omega$. The equality $\mathfrak{a}(\mathcal{I})=\omega$ not being an uncommon result when $\mathcal{I}$ is a definable ideal on a countable set $X$, as can be seen in Chapter 2 of \cite{farkas}, the cardinal $\overline{\mathfrak{a}}(\mathcal{I})$, defined as the smallest size of an uncountable $\mathcal{I}$-mad family, was introduced. On one hand the study of this cardinal helps deepen the understanding of the combinatorics $\mathcal{P}(X)\slash\mathcal{I}$. On the other hand, be it equal to $\mathfrak{a}(\mathcal{I})$ or not, it often reflects some of the behaviour of $\mathfrak{a}$ (see \cite{farkas}). The next step in this work is to study the cardinal invariant $\overline{\mathfrak{a}}(\mathcal{NC})$. On that matter we have the following result. 

\begin{theorem}
\label{t=omega_1}
If $\mathfrak{t}=\omega_1$, then $\overline{\mathfrak{a}}(\mathcal{NC})=\omega_1$.
\end{theorem}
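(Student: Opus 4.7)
The plan is to produce an uncountable $\mathcal{NC}$-mad family of size $\omega_1$; the reverse inequality $\overline{\mathfrak{a}}(\mathcal{NC}) \geq \omega_1$ is immediate from the definition. Using $\mathfrak{t} = \omega_1$, I would fix a $\subseteq^*$-decreasing tower $\{T_\alpha : \alpha < \omega_1\} \subseteq [\omega]^\omega$ with $T_0 = \omega$, no infinite pseudointersection, and each $S_\alpha := T_\alpha \setminus T_{\alpha+1}$ infinite (achievable by thinning). My first candidate is the \emph{nested squares}
$$F_\alpha := (T_\alpha \times T_\alpha) \setminus (T_{\alpha+1} \times T_{\alpha+1}),\qquad \alpha < \omega_1.$$

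These are $\mathcal{NC}$-AD: for $\alpha < \beta$, $F_\beta \subseteq T_\beta \times T_\beta \subseteq^* T_{\alpha+1} \times T_{\alpha+1}$, which is disjoint from $F_\alpha$, so $F_\alpha \cap F_\beta \in fin \times fin \subseteq \mathcal{NC}$. For maximality I would use the density of rectangles ($\circledast$): given $X, Y \in [\omega]^\omega$, seek $\alpha$ with $(X \times Y) \cap F_\alpha \in \mathcal{NC}^+$. Since that intersection contains $(X \cap S_\alpha) \times (Y \cap T_\alpha)$, it suffices to have both factors infinite for some $\alpha$, or the symmetric version. Let $\alpha_X = \min\{\alpha < \omega_1 : X \setminus T_\alpha \text{ infinite}\}$ (which exists since $X$ is not a pseudointersection of the tower) and similarly $\alpha_Y$. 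If, say, $\alpha_X \leq \alpha_Y$ and $\alpha_X = \beta+1$ is a successor, then $X \cap S_\beta =^* X \setminus T_{\beta+1}$ is infinite and $Y \cap T_\beta$ is infinite (because $Y \subseteq^* T_\beta$), so $\alpha = \beta$ works.

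The hard part will be the case where $\alpha_X$ and $\alpha_Y$ are both limit ordinals: then $X \cap S_\alpha$ and $Y \cap S_\alpha$ are finite for all $\alpha < \min(\alpha_X, \alpha_Y)$, and the nested squares alone fail. Geometrically, $X$ and $Y$ are pseudointersections of $\{T_\beta : \beta < \alpha_X\}$ incompatible with the chosen continuation $T_{\alpha_X}$. My remedy would be to augment the family at each countable limit $\lambda < \omega_1$: fix a cofinal sequence $\beta^\lambda_n \nearrow \lambda$ and add an auxiliary set shaped like $\coprod_{n<\omega} T_{\beta^\lambda_n}$ (a ``diagonal'' pseudointersection), exploiting the countable-supremum structure of $\mathcal{NC}$ from Lemma \ref{aI=omega}; any $X \times Y$ with $\alpha_X = \alpha_Y = \lambda$ intersects such a set positively because both $X$ and $Y$ are pseudointersections of the sequence $\{T_{\beta^\lambda_n}\}$. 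The delicate step is to further carve these auxiliary sets so that they remain $\mathcal{NC}$-AD from every $F_\alpha$, likely by subtracting the nested-square tails corresponding to $\alpha \geq \lambda$; this AD verification, together with confirming that the augmented family of total size $\omega_1 \cdot \omega_1 = \omega_1$ is indeed $\mathcal{NC}$-maximal, is the main technical obstacle of the proof.
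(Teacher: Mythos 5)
Your nested squares $F_\alpha$ do form an uncountable $\mathcal{NC}$-AD family, and your successor-stage maximality argument is correct, but the limit-stage repair is a genuine gap and the specific carving you propose provably fails. Take a limit $\lambda$, a cofinal sequence $\beta^\lambda_n\nearrow\lambda$, and set $G_\lambda:=\coprod_{n<\omega}T_{\beta^\lambda_n}\setminus(T_\lambda\times T_\lambda)$ (this is the natural reading of ``subtracting the tails $\alpha\geq\lambda$'', since each $F_\gamma$ with $\gamma\geq\lambda$ is $\mathcal{NC}$-almost contained in $T_\lambda\times T_\lambda$). Now fix any $\alpha<\lambda$. For all but finitely many $n\in S_\alpha$ one has $n\notin T_\lambda$ (because $T_\lambda\subseteq^* T_{\alpha+1}$ and $S_\alpha\cap T_{\alpha+1}=\emptyset$) and $\beta^\lambda_n>\alpha+1$; for such $n$ the $n$-th column of $F_\alpha$ is all of $T_\alpha$, the subtraction of $T_\lambda\times T_\lambda$ removes nothing from column $n$, and so the $n$-th column of $G_\lambda\cap F_\alpha$ is $T_{\beta^\lambda_n}\cap T_\alpha=^* T_{\beta^\lambda_n}$. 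These columns, indexed by a cofinite subset of the infinite set $S_\alpha$, form a centered family, hence $G_\lambda\cap F_\alpha\in\mathcal{NC}^+$ for \emph{every} $\alpha<\lambda$: the diagonal pseudointersection meets the vertical strip $S_\alpha\times T_\alpha$ of every earlier annulus positively. Any further carving that kills these intersections for all $\alpha<\lambda$ threatens exactly the positivity against $X\times Y$ that you need for maximality at $\lambda$ (and you would still have to treat the subcases where only one of $\alpha_X,\alpha_Y$ is a limit, or where $X\cap T_\lambda$ is again infinite, which forces a recursion through higher limits). So the step you flag as ``the main technical obstacle'' is in fact the entire content of the theorem, and it is not closed.

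For comparison, the paper's proof sidesteps the two-dimensional geometry entirely. It takes an \emph{increasing} tower $\lbrace X_\alpha\mid\alpha<\omega_1\rbrace$ (the complements form a $\subseteq^*$-decreasing tower with no pseudointersection) and puts $A_\alpha:=\omega\times X_\alpha\setminus\bigvee_{\beta<\alpha}\omega\times X_\beta$. The point is that Lemma \ref{aI=omega} realizes the countable supremum $\bigvee_{\beta<\alpha}\omega\times X_\beta$ as the concrete set $\coprod_{n<\omega}\bigcup_{i\leq n}X_{\beta_i}$ for an enumeration $\lbrace\beta_i\mid i<\omega\rbrace$ of $\alpha$, so $A_\alpha$ is an honest positive set. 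AD-ness is immediate because $\omega\times X_\beta$ is one of the terms removed from $A_{\alpha}$ whenever $\beta<\alpha$, and maximality follows by taking, for a rectangle $A\times B$, the least $\alpha$ with $B\cap X_\alpha$ infinite: then $(A\times B)\cap(\omega\times X_\beta)\in fin\times fin$ for all $\beta<\alpha$, so $(A\times B)$ meets the realized supremum in an $\mathcal{NC}$ set, while $(A\times B)\cap(\omega\times X_\alpha)$ is positive. If you want to salvage your write-up, replacing the nested squares by this one-coordinate construction is the way to go.
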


\begin{proof}
Suppose that $\mathfrak{t}=\omega_1$ and that $\lbrace X_\alpha :\alpha <\omega_1 \rbrace\subseteq[\omega]^\omega$ is an increasing tower of $\mathcal{P}(\omega)\slash fin$ (i.e. the complements of its elements form a tower). For $\alpha <\omega_1 $ define $$A_\alpha :=\omega\times X_\alpha \setminus\bigvee_{\beta<\alpha}\omega\times X_\beta .$$ 

These sets are well defined from lemma \ref{aI=omega} because each $\alpha$ is a countable ordinal. From its very definition it follows that  $\lbrace A_\alpha :\alpha<\omega_1 \rbrace$ is an $\mathcal{NC}$-ad family. Take $A,B\in[\omega]^\omega$ and let $\alpha$ be the least ordinal number less than $\omega_1$ such that $\vert B\cap X_\alpha \vert =\omega$. It follows that $(A\times B)\cap(\omega\times X_\alpha) \in\mathcal{NC}^+$ and that $(A\times B)\cap(\omega\times X_\beta )\in\mathcal{NC}$ for all $\beta <\alpha$. Therefore $(A\times B)\cap A_\alpha \in\mathcal{NC}^+$. Then we have an $\mathcal{NC}$-mad family of size $\omega_1$.
\end{proof}

Recall that $\mathfrak{b}$, is the smallest size of a family $\mathcal{F}\subseteq\omega^\omega$ such that for all $g\in\omega^\omega$ there exists $f\in\mathcal{F}$ such that $f(m)> g(m)$, for infinitely many $m<\omega$ ($f\nleq^* g$, in standard notation).  It is known that adding $\kappa$ many Hechler reals to a GCH model, for an uncountable regular cardinal $\kappa$, we get a model satisfying $\mathfrak{t}=\omega_1$ and $\mathfrak{b}=\kappa=\mathfrak{c}$ (see \cite{baumgartner} or \cite{brendle}). On the other hand one can prove that $\mathfrak{b}\leq\mathfrak{a}(2)$ (see \cite{note_on_products}). From this we get the following consistency result.

\begin{corollary}
\label{small_uncountable_}
If $\kappa$ is an uncountable regular cardinal of a model $V$ of ZFC, there exists a $ccc$ generic extension of $V$ where $\overline{\mathfrak{a}}(\mathcal{NC})=\omega_1 <\mathfrak{b}=\mathfrak{a}(2)=\mathfrak{a}=\kappa$. 
\end{corollary}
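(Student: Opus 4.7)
The plan is essentially to assemble the ingredients that have already been cited in the paragraph preceding the corollary, so the proof is a verification rather than genuine new work. I will take $V$ to be a model of $\mathrm{ZFC}+\mathrm{GCH}$ (without loss of generality, since one can first collapse to obtain GCH), and force over $V$ with the finite support iteration of Hechler forcing of length $\kappa$. This iteration is ccc (Hechler forcing is $\sigma$-centered and a finite support iteration of ccc posets is ccc), so $\omega_1$ and $\kappa$ are preserved and the result is indeed a ccc generic extension.

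In the extension I would invoke two classical facts from the cited sources of Baumgartner--Dordal / Brendle: first, $\mathfrak{b}=\mathfrak{c}=\kappa$ in the Hechler model (the added dominating reals witness $\mathfrak{b}\geq\kappa$, and $\mathfrak{b}\leq\mathfrak{c}\leq\kappa^{\aleph_0}=\kappa$ by the GCH ground model and the size of the iteration); second, $\mathfrak{t}=\omega_1$ in the Hechler model. From $\mathfrak{t}=\omega_1$ and Theorem~\ref{t=omega_1} one immediately gets $\overline{\mathfrak{a}}(\mathcal{NC})=\omega_1$.

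For the right-hand side of the chain, I would combine three already-noted inequalities. The inequality $\mathfrak{b}\leq\mathfrak{a}(2)$ is the cited result from \cite{note_on_products}; the inequality $\mathfrak{a}(2)\leq\mathfrak{a}$ is part of the chain $\mathfrak{a}(\mathcal{NC})\leq\mathfrak{a}(2)\leq\mathfrak{a}$ displayed earlier in this section; and the trivial bound $\mathfrak{a}\leq\mathfrak{c}$ holds in $\mathrm{ZFC}$. Putting these together in the extension yields
\[
\kappa=\mathfrak{b}\leq\mathfrak{a}(2)\leq\mathfrak{a}\leq\mathfrak{c}=\kappa,
\]
which forces equality throughout, while $\overline{\mathfrak{a}}(\mathcal{NC})=\omega_1<\kappa$ separates the two sides.

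There is no real obstacle beyond choosing the right standard facts to quote: the two delicate pieces are (a) $\mathfrak{t}=\omega_1$ in the Hechler model, which is the reason the paper specifically refers to \cite{baumgartner,brendle}, and (b) the inequality $\mathfrak{b}\leq\mathfrak{a}(2)$ from \cite{note_on_products}, which is what makes $\mathfrak{b}$ a lower bound for $\mathfrak{a}(2)$ and forces the right-hand equalities. Once both are in hand, the corollary is just the concatenation of inequalities above with Theorem~\ref{t=omega_1}, so the proof reduces to a few lines.
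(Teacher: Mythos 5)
Your proof is correct and is essentially the paper's own argument: the corollary is presented as an immediate consequence of the preceding paragraph (a finite support Hechler iteration of length $\kappa$ over a GCH model gives $\mathfrak{t}=\omega_1$ and $\mathfrak{b}=\mathfrak{c}=\kappa$, Theorem \ref{t=omega_1} then gives $\overline{\mathfrak{a}}(\mathcal{NC})=\omega_1$, and the chain $\mathfrak{b}\le\mathfrak{a}(2)\le\mathfrak{a}\le\mathfrak{c}$ forces the remaining equalities). The only caveat, which the paper shares, is that your preliminary ``collapse to obtain GCH'' is not ccc, so strictly speaking one should assume $V\models\mathrm{GCH}$ (or at least $\mathfrak{c}^{V}\le\kappa=\kappa^{\aleph_0}$) in order for the resulting model to be literally a ccc extension of $V$.
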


Since $\mathfrak{b}\leq\mathfrak{a}(fin\times fin)$, $\mathfrak{b}\leq\overline{\mathfrak{a}}(\mathcal{I})$, if $\mathcal{I}$ is an analytic $P$-ideal\footnote{An ideal $\mathcal{I}$ is a $P$-ideal iff $add^* (\mathcal{I})>\omega$.}, and $add(\mathcal{M})\leq\overline{\mathfrak{a}}(nwd)$,\footnote{$add(\mathcal{M})$ is the smallest size of a family of meager sets of the real line whose union is not a meager set. It is also equal to $\kappa$ in this generic extension.} where $nwd$ is the ideal of nowhere dense subsets of the rationals, 
these cardinal invariants are also equal to $\kappa$ in the model of Corollary \ref{small_uncountable_}. All this signals a huge contrast between the combinatorial nature of $\mathcal{P}(\omega\times\omega)\slash\mathcal{NC}$ and that of other definable quotients, as well to that of its subalgebras $\mathcal{P}(\omega)\slash fin$ and $\mathcal{P}(\omega)\slash fin\oplus \mathcal{P}(\omega)\slash fin$.


Other small substructure is given in the following theorem. Since it depends on the existence of \textit{Aronszajn trees}, a note on trees and their notation will be useful for the remaining of this section. 

If $\kappa$ and $\lambda$ are infinite ordinals, $T\subseteq\kappa^{<\lambda}$ will be called a (sub)tree if $\sigma\upharpoonright\alpha\in T$, for all $\sigma\in T$ and $\alpha\in dom(\sigma)$. If $\alpha<\lambda$, the $\alpha$-th level of $T$ is the set $$T_\alpha :=\lbrace\sigma\in T\mid dom(\sigma)=\alpha\rbrace.$$ Accordingly the restriction to $\alpha$ is the subtree of $\kappa^{<\alpha}$ $$T_{<\alpha}:=\lbrace\sigma\in T\mid dom(\sigma)<\alpha\rbrace.$$ The (cofinal) \textit{branches} of $T$ are the elements of $$[T]:=\lbrace f\in\kappa^\lambda \mid\forall\alpha<\lambda~f\upharpoonright\alpha\in T\rbrace.$$ We will say that $T$ is \textit{well-pruned} if for all $\sigma\in T$ and all $dom(\sigma)<\alpha<\lambda$ there exists $\tau\in T_{\alpha}$ such that $\sigma\subseteq\tau$.\footnote{For the purposes of this text this definition will be enough. For a more standard definition of well-pruned tree see \cite{kunen}.} A tree $T\subseteq\omega^{<\omega_1}$ is called an \textit{Aronszajn} tree if $0<\vert T_\alpha \vert\leq\omega$, for all $\alpha<\omega_1$,  and $[T]=\emptyset$. The existence of Aronszajn trees is provable in ZFC. Basic and abundant information on this subject can be found in Chapter III of \cite{kunen}.

\begin{theorem}
\label{aronszjan2}
Let $T\subseteq\omega^{<\omega_1}$ be an Aronszajn tree. Take $\lbrace A_\sigma \mid \sigma\in T\rbrace\subseteq[\omega]^\omega$ such that if $\sigma\subseteq\tau\in T$, then $A_\tau \subseteq^* A_\sigma$ and $\lbrace A_\sigma \mid \sigma\in T_\alpha \rbrace$ is a partition of $\omega$, for all $\alpha <\omega_1$. Then the sets $X_\alpha :=\bigvee_{\sigma \in T_\alpha }\omega\times A_\sigma$ for $\alpha<\omega_1$ form an $\mathcal{NC}$-tower.
\end{theorem}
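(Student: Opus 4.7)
The plan is to handle well-definedness and monotonicity cheaply from Lemma~\ref{aI=omega}, then invest all the work in the no-lower-bound clause; strictness of the chain follows by a soft pigeonhole argument. For well-definedness, enumerating $T_\alpha = \{\sigma_n : n<\omega\}$, Lemma~\ref{aI=omega} realizes $X_\alpha = \coprod_n \bigcup_{i\leq n} A_{\sigma_i}$ as a supremum in the quotient (independent of the enumeration), and $\omega \times A_{\sigma_0} \subseteq_{\mathcal{NC}} X_\alpha$ places $X_\alpha$ in $\mathcal{NC}^+$. For monotonicity, whenever $\alpha<\beta$ and $\tau \in T_\beta$, the assumption $A_\tau \subseteq^* A_{\tau\upharpoonright\alpha}$ gives $\omega\times A_\tau \subseteq_{\mathcal{NC}} \omega\times A_{\tau\upharpoonright\alpha} \subseteq_{\mathcal{NC}} X_\alpha$, and taking suprema over $\tau \in T_\beta$ yields $X_\beta \subseteq_{\mathcal{NC}} X_\alpha$.

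For the no-lower-bound clause, suppose some $A \in \mathcal{NC}^+$ satisfies $A \subseteq_{\mathcal{NC}} X_\alpha$ for every $\alpha<\omega_1$. By density of $\circledast$ one may assume $A = X\times Y$, and unwinding Lemma~\ref{aI=omega} this forces, at every level $\alpha$, a finite $F_\alpha \subseteq T_\alpha$ with $Y \subseteq^* \bigcup_{\sigma\in F_\alpha} A_\sigma$. Let $G_\alpha := \{\sigma\in T_\alpha : |Y\cap A_\sigma|=\omega\}$. The partition of $\omega$ at level $\alpha$ gives $G_\alpha \subseteq F_\alpha$ (hence finite), and since $Y$ is infinite some $\sigma \in F_\alpha$ must have $Y \cap A_\sigma$ infinite, so $G_\alpha \neq \emptyset$. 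The refinement $A_\tau \subseteq^* A_{\tau\upharpoonright\alpha}$ gives $\sigma \in G_\beta \Rightarrow \sigma\upharpoonright\alpha \in G_\alpha$, so $T' := \bigcup_\alpha G_\alpha$ is a subtree of $T$. The crucial point is that the restriction $G_\beta \to G_\alpha$ is surjective: if some $\sigma \in G_\alpha$ had no extension in $G_\beta$, then for every $\tau \in F_\beta$ the intersection $Y \cap A_\sigma \cap A_\tau$ is finite --- either because $Y \cap A_\tau$ is finite (when $\tau\upharpoonright\alpha = \sigma$, since such $\tau$ lies outside $G_\beta$) or because $A_\sigma \cap A_\tau$ is finite (when $\tau\upharpoonright\alpha \neq \sigma$, by the partition at level $\alpha$ combined with $A_\tau \subseteq^* A_{\tau\upharpoonright\alpha}$); using $Y \subseteq^* \bigcup_{F_\beta} A_\tau$ then makes $Y \cap A_\sigma$ a finite union of finite sets modulo a finite error, contradicting $\sigma \in G_\alpha$. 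Hence $T'$ is a well-pruned subtree of $T$ with finite nonempty levels at every $\alpha < \omega_1$.

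I would then build a cofinal branch of $T'$ by transfinite recursion on $\omega_1$: successor steps are trivial by well-prunedness; at a limit $\lambda$, given the partial branch $(\sigma_\gamma)_{\gamma<\lambda}$, the sets $S_\gamma := \{\tau \in G_\lambda : \sigma_\gamma \subseteq \tau\}$ form a descending chain of nonempty finite subsets of $G_\lambda$, so they stabilize, and the common stable value contains the unique element $\sigma_\lambda = \bigcup_\gamma \sigma_\gamma \in G_\lambda$. The resulting branch is a cofinal branch of $T$, contradicting $[T] = \emptyset$. Strictness --- that $\{[X_\alpha]_{\mathcal{NC}} : \alpha < \omega_1\}$ has size $\omega_1$ --- follows by pigeonhole: were some value attained on an uncountable (hence cofinal) $S \subseteq \omega_1$, that common value would be in $\mathcal{NC}^+$ and a lower bound of the whole chain, contradicting the preceding. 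The main obstacle I anticipate is the surjectivity of $G_\beta \to G_\alpha$, which requires delicately combining the partition property with the $\subseteq^*$-refinement; after that, the tree-climbing at limits is routine (descending finite nonempty sets have nonempty intersection) and the strictness is purely combinatorial.
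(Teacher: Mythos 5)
Your proposal is correct, and for the heart of the theorem --- the no-lower-bound clause --- it takes a genuinely different route from the paper. The paper's proof is a two-line ultrafilter trick: from $Y\subseteq^{*}\bigcup_{\sigma\in F_\alpha}A_\sigma$ it extends the centered family $\lbrace\bigcup_{\sigma\in F_\alpha}A_\sigma\mid\alpha<\omega_1\rbrace$ (together with $Y$) to an ultrafilter, uses finiteness of each $F_\alpha$ to select a single $\sigma_\alpha\in F_\alpha$ with $A_{\sigma_\alpha}$ in the ultrafilter, and then observes that centeredness of $\lbrace A_{\sigma_\alpha}\mid\alpha<\omega_1\rbrace$, combined with the partition-at-each-level and $\subseteq^{*}$-refinement hypotheses, forces the $\sigma_\alpha$ to be pairwise comparable, i.e.\ a cofinal branch of $T$. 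You instead make the branch extraction fully combinatorial: you isolate the subtree $T'=\bigcup_\alpha G_\alpha$ of nodes whose $A_\sigma$ meets $Y$ infinitely, verify that its levels are finite and nonempty, prove surjectivity of the restriction maps $G_\beta\to G_\alpha$ (your case analysis there is correct --- the partition kills the cross terms and the non-membership in $G_\beta$ kills the extensions of $\sigma$), and then run the standard stabilization argument for trees of height $\omega_1$ with finite levels; this is in fact the same ``compacity argument'' the paper itself invokes later in Proposition~\ref{limits_of_aronszjan}. What each approach buys: the ultrafilter selection is much shorter and delegates all the bookkeeping to maximality of the ultrafilter, whereas your argument avoids ultrafilters entirely, exhibits the branch concretely, and additionally supplies the routine verifications (well-definedness of the suprema via Lemma~\ref{aI=omega}, monotonicity, and the pigeonhole argument that the chain of classes really has size $\omega_1$) that the paper leaves implicit. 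Both are sound.
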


\begin{proof}
Suppose there exist $A,B\in[\omega]^\omega$ such that $A\times B\subseteq_{\mathcal{NC}}X_\alpha$, for all $\alpha<\kappa$. This means that for all $\alpha$ there exists $F_\alpha \in[ T_\alpha ]^{<\omega}$ such that $$B \subseteq^* \bigcup_{\sigma\in F_\alpha} A_\sigma.$$ Extending $\lbrace\bigcup_{\sigma\in F_\alpha}A_\sigma \mid\alpha<\omega_1 \rbrace$ to an ultrafilter we can choose $\sigma_\alpha$, for all $\alpha<\omega_1$, such that $\lbrace A_{\sigma_\alpha}\mid\alpha<\omega_1 \rbrace$ is a centered family. But this means that $\lbrace\sigma_\alpha \mid\alpha<\omega_1 \rbrace$ is a branch of $T$, which is a contradiction. 
\end{proof}

While Corollary \ref{small_uncountable_} detached the ideal $\mathcal{NC}$ from many definable ideals, Theorem \ref{aronszjan2} provides it with a companion: it was proved in \cite{szymanski} that $\mathcal{P}(\omega\times\omega)\slash fin\times fin$ has a tower of size $\omega_1$.

Given this suite of results on small substructures of $\mathcal{P}(\omega\times\omega)\slash\mathcal{NC}$, one could wonder if this tower could be used in a way similar to that of proposition \ref{t=omega_1} to build a partition of size $\omega_1$, whose existence were provable in ZFC. However this is not the case for this idea. To prove this we will first prove some limits that iterating $\bigvee$ and $\bigwedge$, on countable indices, has on this Boolean algebra.  

\begin{proposition}
\label{gdelta,fsigma}
Let $T\subseteq\omega^{<\omega}$ be a subtree and take  $\lbrace A_\sigma \mid \sigma\in T\rbrace\subseteq[\omega]^\omega$ such that if $\sigma\subseteq\tau\in T$, then $A_\tau \subseteq^* A_\sigma$ and $\lbrace A_\sigma \mid \sigma\in T_n \rbrace$ is a partition of $\omega$, for all $n<\omega$. Then $$\bigwedge_{n<\omega}\bigvee_{\sigma\in T_n}\omega\times A_\sigma$$ exists in $\mathcal{P}(\omega\times\omega)\slash\mathcal{NC}$ iff there exist $T^n \subseteq T$, for $n<\omega$, finite level subtrees such that 
$$[T]=\bigcup_{n<\omega}[T^n].$$
\end{proposition}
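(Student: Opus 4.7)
The plan is to characterise rectangular lower bounds of $\{B_m:=\bigvee_{\sigma\in T_m}\omega\times A_\sigma\}_m$ in tree-theoretic terms and then prove both directions. First, using the partition $\{A_\sigma:\sigma\in T_m\}$ of $\omega$, I would show $X\times Y\subseteq_{\mathcal{NC}}B_m$ iff the set $F_m(Y):=\{\sigma\in T_m:|A_\sigma\cap Y|=\omega\}$ is finite. Setting $T(Y):=\bigcup_m F_m(Y)$, the hypothesis $\sigma\subseteq\tau\Rightarrow A_\tau\subseteq^* A_\sigma$ makes $T(Y)$ a subtree of $T$; a partition-chasing argument (noting that $A_\sigma\cap Y$ is infinite while the contributions to it from non-extensions $\tau\in T_{|\sigma|+1}$ are finite) then shows $T(Y)$ is well-pruned. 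Hence $X\times Y$ is a lower bound of $\{B_m\}$ iff $[T(Y)]$ is a compact subset of $[T]$; by density of rectangles, this captures all lower bounds.

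For the $(\Leftarrow)$ direction, given nested well-pruned finite-level $T^n$ with $[T]=\bigcup_n[T^n]$, I would set $V_m:=\bigcup_{\sigma\in T^m_m}A_\sigma$ and $I:=\coprod_m V_m$. Using the $\mathcal{NC}$-equivalent representative $B_m\sim\coprod_k\bigcup_{\tau\in T^k_m}A_\tau$ (valid by Lemma \ref{aI=omega} applied to the $\subseteq$-increasing family $\{\bigcup_{\tau\in T^k_m}A_\tau\}_k$) and the column inclusion $V_k\subseteq^*\bigcup_{\tau\in T^k_m}A_\tau$ for $k\geq m$ (since any $\sigma\in T^k_k$ restricts to $\sigma\upharpoonright m\in T^k_m$ with $A_\sigma\subseteq^* A_{\sigma\upharpoonright m}$), one concludes $I\setminus B_m\in fin\times fin\subseteq\mathcal{NC}$. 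For a rectangular lower bound $\omega\times Y$, compactness of $[T(Y)]\subseteq\bigcup_n[T^n]$ with nested closed pieces yields $[T(Y)]\subseteq[T^{n_0}]$ for some $n_0$; well-prunedness forces $T(Y)\subseteq T^{n_0}$, whence $Y\subseteq^* V_m$ for $m\geq n_0$ and $\omega\times Y\leq I$. Density of rectangles then gives $I=\bigwedge_m B_m$.

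For the $(\Rightarrow)$ direction, assume $I:=\bigwedge_m B_m$ exists with representative $J$. For each $\sigma\in[T]$, I would pick $a_n^\sigma\in\bigcap_{m\leq n}A_{\sigma\upharpoonright m}$ and set $Y_\sigma:=\{a_n^\sigma:n<\omega\}$, so that $T(Y_\sigma)=\{\sigma\upharpoonright n:n<\omega\}$ and $\omega\times Y_\sigma\leq I$. Expanding $(\omega\times Y_\sigma)\setminus J\in\mathcal{NC}$ column-wise gives: for every $X\in[\omega]^\omega$ a finite $F\subseteq X$ exists with $Y_\sigma\subseteq^* W_F:=\bigcup_{k\in F}J(k)$; taking $X=\omega$ yields $\sigma\in[T(W_{F_\sigma})]$ for some finite $F_\sigma\subseteq\omega$ (since $Y_\sigma\cap A_{\sigma\upharpoonright m}$ is cofinite in $Y_\sigma$, forcing $W_{F_\sigma}\cap A_{\sigma\upharpoonright m}$ infinite for all $m$). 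Enumerating $[\omega]^{<\omega}$ as $(F_n)_n$ and setting $T^n:=T(W_{F_n})$ would then give $[T]=\bigcup_n[T^n]$.

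The main obstacle is showing each $T(W_F)$ has finite levels; the inclusion $T(W_F)_m\subseteq\bigcup_{k\in F}T(J(k))_m$ reduces this to ensuring every column $J(k)$ is lower-bound-type, i.e., $T(J(k))$ finite-level, which is not automatic for a generic representative. My plan for resolving this is to fix a canonical representative whose columns are lower-bound-type — for instance by replacing each $J(k)$ with a maximal lower-bound-type subset $Y_k\subseteq J(k)$ and verifying $\mathcal{NC}$-equivalence via the density of rectangles. Alternatively, a Hurewicz-dichotomy contrapositive might work: if $[T]$ is not $K_\sigma$, it contains a closed copy of $\omega^\omega$ via some $\phi$, and a Baire-category argument on the $F_\sigma$-sets $\{x\in\omega^\omega:Y_{\phi(x)}\subseteq^* W_F\}$ should produce a basic open set $\langle\sigma\rangle$ with $\phi(\langle\sigma\rangle)\subseteq[T(W_F)]$, forcing some $T(J(k))$ with $k\in F$ to have an infinite level; one would then need to turn this into a contradiction with $J\leq B_m$ for a suitable $m$.
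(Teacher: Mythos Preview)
Your $(\Leftarrow)$ direction has the right candidate $I$ and the compactness step is fine, but the claim ``$Y\subseteq^* V_m$ for $m\geq n_0$'' is false in general: $F_m(Y)\subseteq T^m_m$ only says that $Y\cap A_\sigma$ is \emph{finite} for $\sigma\in T_m\setminus T^m_m$, and since that level may be infinite, $Y\setminus V_m$ can be an infinite union of finite pieces. Consequently $\omega\times Y\leq I$ need not hold. What does follow is that any branch $f\in[T(Y)]\subseteq[T^{n_0}]$ gives a centered family $\{Y\}\cup\{A_{f\upharpoonright m}:m\geq n_0\}\supseteq\{Y\}\cup\{I(m):m\geq n_0\}$, so $(\omega\times Y)\cap I\in\mathcal{NC}^+$; this weaker conclusion is exactly what the paper proves, and it suffices (if some lower bound $c$ had $c\setminus I\notin\mathcal{NC}$, a rectangle inside $c\setminus I$ would still meet $I$).

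For $(\Rightarrow)$ the obstacle you name is genuine, and neither of your fixes is complete. Fix (a) would need a real argument that replacing each column $J(k)$ by a maximal $Y_k\subseteq J(k)$ with finite-level $T(Y_k)$ yields something $\mathcal{NC}$-equivalent to $J$; this is not obvious and you have not supplied it. Fix (b) is even more speculative. The paper sidesteps the problem entirely with a different decomposition: instead of looking at single columns of a representative $A$, it proves that $A=\bigvee_{F\in[\omega]^{<\omega}}A_F$ where $A_F:=\coprod_{n\in\omega\setminus F}\bigcap_{i\in(n{+}1)\setminus F}A(i)\setminus\bigcup_{i\in F}A(i)$. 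The point of this claim (whose proof uses the infimum property of $A$ in an essential way, via a maximal-extension argument on the set of columns whose family together with $B$ is centered) is that each positive $A_F$ is \emph{automatically} of the form $\bigwedge_n\omega\times B_n$ with $B_n$ decreasing. One then shows, for each such piece, that for every level $k$ some $B_{n_k}$ is almost contained in a finite union $\bigcup_{\sigma\in F_k}A_\sigma$; the $F_k$'s assemble into the required finite-level subtree. So the key idea you are missing is not to fix the columns of $J$ but to pass to a canonical $F_\sigma$-type decomposition of the infimum whose pieces are already of lower-bound type.
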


\begin{proof}
First suppose that we have such decomposition $\lbrace T^n \mid n<\omega\rbrace$ of $T$. Define $$A:=\coprod_{n<\omega}\bigcup\lbrace A_\sigma \mid\sigma\in\bigcup_{i\leq n}T^{i}_{n}\rbrace.$$ Take a positive set $C\times B\subseteq_{\mathcal{NC}}A$. Without loss of generality, this means that $B\subseteq^* A(n)$ for all $n\in C$. It follows that for all $n\in C$ there exists $F_n \in[T_n ]^{<\omega}$ such that $B\subseteq^* \bigcup_{\sigma\in F_n} A_\sigma$, and hence that $C\times B \subseteq_{\mathcal{NC}}\bigvee_{\sigma\in T_n}\omega\times A_\sigma$ for all $n<\omega$. Therefore $A$ is a lower bound of these sets. 
 
Let $C\times B$ be a lower bound of $\bigvee_{\sigma\in T_n}\omega\times A_\sigma$, for all $n<\omega$. For all $n<\omega$ there exists $F_n \in[T_n ]^{<\omega}$ such that $B\subseteq^* \bigcup_{\sigma\in F_n}A_\sigma$. This means that there exists $f\in[T]$ such that $$\lbrace A_{f\upharpoonright n}\mid n<\omega\rbrace\cup\lbrace B\rbrace $$ is a centered family. If $f\in T^n$, this means that $\lbrace A(m)\mid m\geq n\rbrace\cup\lbrace B\rbrace$ is a centered family. Therefore $A\cap (C\times B)\in\mathcal{NC}^+$. It follows that $A=\bigwedge_{n<\omega}\bigvee_{\sigma\in T_n}\omega\times A_\sigma$.

Now suppose that there exists $A\in\mathcal{NC}^+$ such that 
$$A=\bigwedge_{n<\omega}\bigvee_{\sigma\in T_n}\omega\times A_\sigma .$$ For $F\in[\omega]^{<\omega}$ define $$A_F :=\coprod_{n\in\omega\setminus F} \bigcap_{i\in n+1\setminus F}A(i)\setminus\bigcup_{i\in F}A(i). $$

\begin{claim}
$$A= \bigvee_{F\in[\omega]^{<\omega}} A_F .$$

\end{claim}

\begin{proof} 
It is clear that $A_F \subseteq_{\mathcal{NC}}A$, for all $F\in[\omega]^{<\omega}$. Take $C\times B\subseteq_{\mathcal{NC}}A$. Without loss of generality this means that $\lbrace A(n)\mid n\in C\rbrace\cup\lbrace B\rbrace$ is a centered family. Extend $C$ to a set $C^\prime$ such that $\lbrace A(n)\mid n\in C^\prime \rbrace\cup\lbrace B\rbrace$ is a centered family, and which is maximal with this property. Let $B^\prime$ be a pseudointersection of said family and suppose that $C^\prime $ is coinfinite. 

Since $C^\prime \times B^\prime$ is a lower bound of $\bigvee_{\sigma\in T_n}\omega\times A_\sigma$ for all $n<\omega$, we can take $F_n \in[T_n ]^{<\omega}$ such that $B^\prime \subseteq^* \bigcup_{\sigma\in F_n}A_\sigma $. But it easily follows that $(\omega\setminus C^\prime )\times B^\prime$ is a positive set $\mathcal{NC}$-almost contained in each $\bigvee_{\sigma\in T_n}\omega\times A_\sigma$. Meanwhile, from the maximality of $C^\prime$, it follows that $\vert A(n)\cap B^\prime \vert<\omega$ for all $n\in\omega\setminus C^\prime$. This means that $(\omega\setminus C^\prime )\times B^\prime$ is $\mathcal{NC}$-almost disjoint to $A$, which is a contradiction. Therefore $C^\prime $ is cofinite. 

Take $F=\omega\setminus C^\prime$. It is not hard to verify that $C\times B^\prime \subseteq_{\mathcal{NC}} A_F$, whereby the claim is proved.
\end{proof}

From their definition we have that for each $\mathcal{NC}$-positive $A_F$ there exists $\lbrace B_n \mid n<\omega\rbrace\subseteq[\omega]^{\omega}$ a decreasing family such that $A_F=\bigwedge_{n<\omega}\omega\times B_n $. So there exists a family $\lbrace B_{n}^m \mid n,m<\omega\rbrace\subseteq[\omega]^\omega$ such that $ B_{n+1}^m \subseteq B_{n}^m$, for all $m,n<\omega$, and $$A=\bigvee_{m<\omega}\bigwedge_{n<\omega}\omega\times B_{n}^m .$$ 

\begin{claim}
Fix $m<\omega$. For all $k<\omega$ there exist $F_{k} \in[T_k ]^{<\omega}$ and $n_k <\omega$ such that $B^{m}_{n_k} \subseteq^* \bigcup_{\sigma\in F_k }A_\sigma$.

\end{claim}

\begin{proof}
Suppose on the contrary that there exists $k<\omega$ such that for all $F\in[T_k]^{<\omega}$ and for all $n<\omega$, $B_{n}^{m}\nsubseteq^* \bigcup_{\sigma\in F}A_\sigma$. This means that there exists $B$, a pseudointersection of the family $\lbrace B^{m}_n \mid n<\omega\rbrace$, which is almost disjoint to every element of the family $\lbrace A_\sigma \mid\sigma\in T_k \rbrace$. Therefore $\omega\times B\subseteq_{\mathcal{NC}}\bigwedge_{n<\omega}\omega\times B_{n}^m$ while being $\mathcal{NC}$-almost disjoint to $\bigvee_{\sigma\in T_k}\omega\times A_\sigma$, which is a contradiction. 

\end{proof}

Fix $m<\omega$. If $n_k$ is the least integer and $F_k$ the smallest finite subset of $T_k$ such that $B^{m}_{n_k} \subseteq^* \bigcup_{\sigma\in F_k }A_\sigma$, it follows that $$T^{m}:=\bigcup_{k<\omega}F_k$$ is a subtree of $T$. 

Unfix $m$. Take $f\in[T]$. Since $A^\prime :=\bigwedge_{n<\omega}\omega\times A_{f\upharpoonright n}\subseteq_{\mathcal{NC}}A$, it follows that there exists $m<\omega$ such that $$ A^\prime \cap \bigwedge_{n<\omega}\omega\times B_{n}^m \in\mathcal{NC}^+ .$$ Take $k<\omega$. If $n_k$ and $F_k$ are as in the previous paragraph, then $f\upharpoonright k\in F_k$, for otherwise we would have that $\vert A_{f\upharpoonright k}\cap B_{n_k}^m \vert<\omega$. Therefore $f\in[T^m ]$ and we conclude that $[T]=\bigcup_{n<\omega}[T_n]$.




\end{proof}

Notice that the existence of said family of finite level subtrees of $T$ gives us the necessary finite steps recipe for approximating the desired substructure. 
  Also observe that if the countable amount of columns of $\omega\times\omega$ gives space to finitely approximating some simple sumprema or infima, like those of Lemma \ref{aI=omega} and Proposition \ref{gdelta,fsigma}, said columns lack ``depth" to approximate some countable but barely more complex algebraic operations. 

In order to use the tower of theorem \ref{aronszjan2} to get a partition of size $\omega_1$ on $\mathcal{P}(\omega\times\omega)\slash\mathcal{NC}$, we need, according to the last proposition, an Aronszajn tree $T$ such that for all $\alpha<\omega_1$ there exists $\lbrace S^{\alpha,m}\mid m<\omega\rbrace$ a family of subtrees of finite levels of $T_\alpha$ such that $$[T_\alpha ]=\bigcup_{m<\omega}[S^{\alpha,m}].$$ The last result of this section shows that this is not the case.

\begin{proposition}
\label{limits_of_aronszjan}
Let $T\subseteq\omega^{<\omega_1}$ be an Aronszajn tree. There exists $\alpha\in Lim(\omega_1 )$ such that if we take $\lbrace S^n \mid n<\omega\rbrace$, a family of finite level subtrees of $T_{<\alpha}$, then $$\bigcup_{n<\omega}[S^n ]\neq[T_{<\alpha}].$$
\end{proposition}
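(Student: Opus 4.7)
The plan is to argue by contradiction: suppose that for every $\alpha\in Lim(\omega_1)$ one can cover $[T_{<\alpha}]$ with a countable family $\{[S^{n,\alpha}]\}_{n<\omega}$ of finite-level subtrees of $T_{<\alpha}$, and from this I will extract an $\omega_1$-branch of $T$, contradicting $[T]=\emptyset$. I first assume $T$ is well-pruned (if $T^{\dagger}\subseteq T$ is a well-pruned Aronszajn subtree, then $\{S^{n,\alpha}\cap T^{\dagger}\}_{n<\omega}$ is still a countable family of finite-level subtrees whose branch spaces cover $[T^{\dagger}_{<\alpha}]$). The foundational observation is that, for each countable limit $\alpha$, $[T_{<\alpha}]$ is a Polish space (closed in $\omega^\alpha$) and each $[S^{n,\alpha}]$ is compact (it is an inverse limit of finite discrete levels), so the hypothesis amounts to each $[T_{<\alpha}]$ being $\sigma$-compact.

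The first real step is a Baire category argument on each $[T_{<\alpha}]$. The union $\bigcup_n \mathrm{int}([S^{n,\alpha}])$ is dense, and basic neighborhoods of $[T_{<\alpha}]$ are of the form $U_\sigma=\{f\in[T_{<\alpha}] : f\upharpoonright dom(\sigma)=\sigma\}$ for $\sigma\in T_{<\alpha}$. Whenever $U_\sigma\subseteq[S^{n,\alpha}]$, well-prunedness forces every $\nu\in T$ with $\sigma\subseteq\nu$ and $dom(\nu)<\alpha$ to lie in $S^{n,\alpha}$, so the cone $\{\nu\in T : \sigma\subseteq\nu,\ dom(\nu)<\alpha\}$ has finite levels; call such a $\sigma$ \emph{tame at $\alpha$}. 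Baire thus shows that above every $\tau\in T_{<\alpha}$ there is a tame-at-$\alpha$ extension in $T_{<\alpha}$.

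Next is a pressing-down argument. Fix $\tau_0\in T$ and, for every $\alpha\in Lim(\omega_1)$ with $\alpha>dom(\tau_0)$, pick a tame-at-$\alpha$ extension $\tau_0^\alpha\supseteq\tau_0$ with $dom(\tau_0^\alpha)<\alpha$. The assignment $\alpha\mapsto dom(\tau_0^\alpha)$ is regressive on a tail of $Lim(\omega_1)$, so Fodor's lemma produces a stationary set on which it equals some fixed $\beta^*$; pigeonhole over the countable set $T_{\beta^*}$ then yields a stationary $S'\subseteq\omega_1$ on which $\tau_0^\alpha=\tau^*$ is constant. Since tame-at-$\alpha$ is downward-closed in $\alpha$ (a smaller cone inherits finite levels) and $S'$ is cofinal in $\omega_1$, the node $\tau^*$ is tame at every countable $\beta>dom(\tau^*)$. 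Hence the cone $C=\{\nu\in T : \tau^*\subseteq\nu\}$ is a well-pruned subtree of $T$ of height $\omega_1$ all of whose levels are finite.

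Finally, I view $(C_\beta)_{\beta<\omega_1}$ as an inverse system of nonempty finite discrete spaces with restriction maps; finite sub-limits are nonempty (take any node at the largest level and restrict), so a standard Tychonoff/compactness argument (an $\omega_1$-version of K\"onig's lemma) makes the full inverse limit nonempty, producing a function $b:\omega_1\to\omega$ with $b\upharpoonright\beta\in T$ for all $\beta<\omega_1$---an $\omega_1$-branch of $T$, contradicting $[T]=\emptyset$. The main obstacle in this plan is the third step: combining Fodor and pigeonhole to extract a node that is tame with respect to cofinally many countable $\alpha$, and maintaining well-prunedness throughout so that the cone $C$ really has the nonempty finite levels needed for the final compactness step.
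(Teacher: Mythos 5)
Your proof is correct, and it shares the combinatorial engine of the paper's argument -- pass to the well-pruned subtree of nodes with uncountably many extensions, press down with Fodor's lemma plus a pigeonhole over a countable level to stabilize a single node, and finish with the inverse-limit/compactness fact that a height-$\omega_1$ tree with nonempty finite levels has a cofinal branch -- but it is organized inside out. The paper first isolates a concrete good $\alpha$ via a Claim: there is a limit $\alpha$ such that every $t\in T_{<\alpha}$ satisfies $\vert t\uparrow\cap T_\gamma\vert=\omega$ for some $dom(t)\leq\gamma<\alpha$. The Claim's proof is where Fodor and compactness appear, applied to the putative counterexamples $t_\alpha$; note that ``$t_\alpha$ never branches infinitely below $\alpha$'' is literally your ``tame at $\alpha$,'' so the two proofs press down on the same objects, obtained by different means (you from Baire category applied to the assumed covering, the paper from the assumed failure of its Claim). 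The paper then handles an arbitrary family $\lbrace S^n\mid n<\omega\rbrace$ at its good $\alpha$ by an explicit fusion: jump to a level where the current node branches infinitely, step off the finite level of $S^i$, and push past a fixed sequence cofinal in $\alpha$. Your Baire-category step is exactly the abstract form of that fusion, and your single global contradiction replaces the paper's two-stage structure. What the paper's version buys is an explicit identification of which $\alpha$ work and an explicit branch escaping any given covering; what yours buys is brevity, by outsourcing the diagonalization to the Baire category theorem. One small point to make explicit in your write-up: for a countable limit $\alpha$ and well-pruned $T$, every node of $T_{<\alpha}$ extends to a cofinal branch of $T_{<\alpha}$ (thread through an $\omega$-sequence converging to $\alpha$); this is what legitimizes both the passage from $U_\sigma\subseteq[S^{n,\alpha}]$ to ``the entire cone above $\sigma$ in $T_{<\alpha}$ lies in $S^{n,\alpha}$'' and the non-vacuousness of the Baire argument.
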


\begin{proof}
First, for $t\in T$ set the notation $t\uparrow:=\lbrace s\in T\mid t\subseteq s\rbrace$. Notice that $$T^\prime :=\lbrace t\in T\mid\vert t\uparrow\vert=\omega_1 \rbrace$$ is a well-pruned Aronszajn tree and that any $\alpha\in Lim(\omega_1)$ proving the proposition for $T^\prime$ will prove it for $T$. So without loss of generality suppose that $T$ is well-pruned. 

\begin{claim}
There exists $\alpha\in Lim(\omega_1 )$ such that for all $t\in T_{<\alpha}$ there exists $dom(t)\leq\gamma<\alpha$ such that $\vert t\uparrow \cap T_\gamma \vert=\omega$. 
\end{claim}
\begin{proof}
Suppose on the contrary that for all $\alpha\in Lim(\omega_1 )$ there exists $t_\alpha \in T_{<\alpha}$ such that $\vert t_\alpha \uparrow\cap T_\gamma \vert<\omega$, for all $dom(t_\alpha )\leq\gamma<\alpha$. Define $g:Lim(\omega_1 )\rightarrow\omega_1$ as $g(\alpha)=dom(t_\alpha )$. This is a regressive function defined on a stationary set. Therefore (see Lemma III.6.14 of \cite{kunen}) there exist $\beta<\omega_1$ and $S\in[\omega_1 ]^{\omega_1 }$ such that $g(\alpha) =\beta$, for all $\alpha\in S$. Furthermore, there exists $t\in T_\beta$ and $S^\prime \in[S]^{\omega_1 }$ such that $t_\alpha =t$ for all $\alpha\in S^\prime$. Since $S^\prime$ is a cofinal subset of $\omega_1$, it follows that $0<\vert t\uparrow\cap T_\gamma \vert<\omega$ for all $dom(t)\leq\gamma<\omega_1$. But a compacity argument gives us a cofinal branch on $T$, which is a contradiction.
\end{proof}

Take the $\alpha$ given by this claim and let $\lbrace S^n \mid n<\omega\rbrace$ be a family of subtrees of $T_{<\alpha}$ with finite levels. Fix $\lbrace\alpha_n \mid n<\omega\rbrace\subseteq\alpha$ an increasing sequence converging to $\alpha$. Begin with $t_0 \in T_{<\alpha}$. The claim gives us $dom(t_0 )\leq\gamma_0 <\alpha$ such that $\vert t_0 \uparrow \cap T_{\gamma_0} \vert=\omega$. Take $t^{\prime}_1 \in(t_0 \uparrow \cap T_{\gamma_0})\setminus S^0$ and choose $t_1 \in t^{\prime}_1 \uparrow\cap T_{\max\lbrace\alpha_0 ,\gamma_0 \rbrace}$. Suppose now that we have $\lbrace t_0 ,...,t_i \rbrace$, a chain in $T_{<\alpha}$, for some $i<\omega$. We know that there exists $dom(t_i)\leq\gamma_i <\alpha$ such that $\vert t_i \uparrow \cap T_{\gamma_i} \vert=\omega$. Therefore we can choose $t_{i+1} \in (t_i \uparrow\cap T_{\max\lbrace\alpha_i ,\gamma_i \rbrace})\setminus S^i$. If $f$ is the cofinal branch of $T_{<\alpha}$ extending the chain $\lbrace t_i \mid i<\omega\rbrace$, by its construction we get that $f\notin\bigcup_{n<\omega}[S^n ]$, whereby proving the proposition.   
\end{proof}

We conclude this section giving some observations and stating a couple of questions. All constructions of towers and partitions on $\mathcal{P}(\omega\times\omega)\slash\mathcal{NC}$ given in this section consist of sets of the type $\omega\times X$, $\bigwedge_{n<\omega}\omega\times X_n$ or  $\bigvee_{n<\omega}\omega\times X_n$ where $\lbrace X\rbrace\cup\lbrace X_n \mid n<\omega\rbrace\subseteq[\omega]^\omega$. Thus far these sets have been proved to be very useful. However they are far from being the only kind of sets of this algebra. For example, if $\lbrace X_n \mid n<\omega\rbrace$ is an independent family, i.e. for all disjoint non-empty $F_0 ,F_1 \in[\omega]^{<\omega}$ the set $$\bigcap_{i\in F_0}X_i \setminus\bigcup_{j\in F_1}X_j$$ is infinite, then $\coprod_{n<\omega}X_n$ is an element of $\mathcal{NC}^+$ not as easily pictured as those of the ``nice" types mentioned above. It is also easy to imagine that there are even less nice elements of $\mathcal{NC}^+$.

Although a good idea for getting a $\mathcal{NC}$-mad family of size $\omega_1$ in ZFC was finally rejected by Proposition \ref{limits_of_aronszjan} the next question remains open:

\begin{question}
\label{omega1_partitions}
Is provable in ZFC that there exists a $\mathcal{NC}$-mad family of size $\omega_1$? Is its existence at least consistent with $\mathfrak{p}>\omega_1$?
\end{question}

It is easy to see that any $\mathcal{NC}$-mad family consisting only of elements of the type $\omega\times X$ or $\bigvee_{n<\omega}\omega\times X_n$ must be of size at least $\mathfrak{a}$. 
Therefore the only hope for getting a $\mathcal{NC}$-mad family of size $\omega_1$ consisting of ``nice" sets, as those discussed in the last paragraphs, while also having $\mathfrak{p}>\omega_1$, can only be achieved with sets of the form $\bigwedge_{n<\omega}\omega\times X_n$. So an auxiliary question to question \ref{omega1_partitions}, translated to a language more familiar to those who deal with infinite combinatorics and very interesting in itself, is the following one. 

\begin{question}
Is it provable in ZFC the existence of a family $\lbrace X_{\alpha}^{n}\mid\alpha<\omega_1 ,~n<\omega\rbrace\subseteq[\omega]^\omega$ such that 

\begin{enumerate}
    \item $X_{\alpha}^{n+1}\subseteq X_{\alpha}^{n}$, for every $\alpha<\omega_1$ and every $n<\omega$,
    
    \item for all $\alpha<\beta<\omega_1$ there exists $n<\omega$ such that $\vert X_{\alpha}^{n}\cap X_{\beta}^{n}\vert<\omega$ and
    
    \item for every $X\in[\omega]^{\omega}$ there exists $\alpha<\omega_1$ such that $\lbrace X\rbrace\cap\lbrace X_{\alpha}^{n}\mid n<\omega\rbrace$ is a centered family?
\end{enumerate}

Is its existence at least consistent with $\mathfrak{p}>\omega_1$?
\end{question}

Notice that if we define $X_\alpha :=\coprod_{n<\omega} X_{\alpha}^{n}$, for all $\alpha<\omega_1$, point 1 says that $X_\alpha$'s are of the ``infimum" type, point 2 says that they are $\mathcal{NC}$-almost disjoint and point 3 gives the madness of the family.

\section{Higher dimensional relatives of the Nowhere Centered ideal}
\label{third_dimension}

As well as $fin^2 :=fin\times fin$ is a ``two-dimensional" version of the ideal $fin$, we can easily generalize it to ideals on $\omega^k$, for all $0<k<\omega$.

\begin{mydef}
\label{fin^k}
For $0<k<\omega$ we will recursively define an ideal $fin^k$ on $\omega^k$ as follows:

\begin{itemize}
    \item $fin^1 :=fin$
    
    \item $fin^k :=\lbrace A\subseteq\omega^k \mid\forall^\infty n<\omega ~A(n)\in fin^{k-1}\rbrace$ for all $k>1$, where $A(n):=\lbrace\overline{x}\in\omega^{k-1}\mid(n)\frown\overline{x}\in A\rbrace$.
\end{itemize}
\end{mydef}

It is easy to see that each $fin^k$ is an ideal and a Borel subset of $\omega^k$. Some kinds of sets that are found in these ideals are the following ones: 

\begin{lemma}
Take $k\geq 1$. Then the following statements hold:

\begin{enumerate}
    \item $\lbrace\overline{x}\in\omega^k \mid\overline{x}(i)<n\rbrace\in fin^k$, for all $i<k$ and $n<\omega$.
    
    \item If $A\subseteq\omega^k$ and $\vert\lbrace n<\omega\mid\overline{x}\frown(n)\in A\rbrace\vert<\omega$, for all $\overline{x}\in\omega^{k-1}$, then $A\in fin^k$.
    
    \item If $1\leq j<k$, $e_0 <...<e_{j-1}<e<k$ and $g:\omega^j \rightarrow\omega$, then $$\lbrace\overline{x}\in\omega^k \mid\overline{x}(e)<g(\overline{x}(e_0 ),...,\overline{x}(e_{j-1}))\rbrace\in fin^k .$$
    
\end{enumerate}

\end{lemma}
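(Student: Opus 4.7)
The plan is to prove all three parts by induction on $k$, carried out in the stated order because (3) will ultimately lean on both (1) and (2) at its inductive step.

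For (1) I would handle the base $k=1$ trivially ($\{x < \omega : x < n\} = \{0,\dots,n-1\}$ is finite) and then at the inductive step split on the value of $i$. If $i = 0$, then for every $m \geq n$ the slice $A(m)$ is empty, so $A \in fin^k$ straight from the recursive definition. If $i \geq 1$, the fibers are constant in $m$: $A(m) = \{\overline{y} \in \omega^{k-1} : \overline{y}(i-1) < n\}$, which lies in $fin^{k-1}$ by the inductive hypothesis. For (2) I would use the same scheme: at step $k$, for fixed $n$ and any $\overline{z} \in \omega^{k-2}$, the identity $\{m : \overline{z}\frown(m) \in A(n)\} = \{m : (n)\frown\overline{z}\frown(m) \in A\}$ shows that the hypothesis on $A$, applied to the tuple $(n)\frown\overline{z} \in \omega^{k-1}$, transfers to the analogous hypothesis for $A(n)$ at level $k-1$. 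By induction $A(n) \in fin^{k-1}$ for every $n$, whence $A \in fin^k$.

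For (3) I would again induct on $k$, starting from $k=2$ (where necessarily $j=1$, $e_0=0$, $e=1$, and $A(n) = \{x : x < g(n)\}$ is finite for every $n$). The inductive step naturally splits according to whether $e = k-1$ or $e < k-1$. In the first case, for any $\overline{y} \in \omega^{k-1}$ the set $\{n : \overline{y}\frown(n) \in A\} = \{n : n < g(\overline{y}(e_0),\dots,\overline{y}(e_{j-1}))\}$ is finite, and part (2) finishes the job. In the second case, all indices $e_0,\dots,e_{j-1},e$ are at most $k-2$, so $A(n)$ is independent of $n$ and is literally an instance of (3) in dimension $k-1$ with the same $j$ and indices; to invoke the inductive hypothesis one needs $1 \leq j < k-1$, and this follows from a pigeonhole argument on the strictly increasing chain $e_0 < \cdots < e_{j-1} < e \leq k-2$ inside $\{0,\dots,k-2\}$.

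The hard part will be the bookkeeping in the second subcase of (3): one must recognise that the last coordinate is genuinely free, so $A(n)$ depends on neither $n$ nor on coordinates beyond $e$ and carries the same shape required by (3) one dimension lower, and then justify the strict inequality $j < k-1$ via the pigeonhole step. Everything else is routine unwinding of the recursive definition of $fin^k$.
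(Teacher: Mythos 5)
Your treatments of (1) and (2) are correct (the paper dismisses these as routine and proves only (3)), and your overall plan for (3) --- an induction with a case split at each step --- matches the paper's in spirit. But the case split itself is different, and your second case has a gap. The paper splits on whether $e_0 =0$ or $e_0 >0$ and always works with the first-coordinate slices $A(n)=\lbrace\overline{x}\in\omega^{k-1}\mid(n)\frown\overline{x}\in A\rbrace$, i.e.\ exactly the slicing that appears in the recursive definition of $fin^k$: when $e_0 >0$ the slice is independent of $n$ and is an instance of (3) for the pair $(j,k-1)$, and when $e_0 =0$ the slice \emph{does} depend on $n$ (the value $n$ gets plugged into $g$) and is an instance for $(j-1,k-1)$.

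The problem is your second case, $e<k-1$. You assert that since all of $e_0 ,\dots,e_{j-1},e$ are at most $k-2$, ``$A(n)$ is independent of $n$ and is literally an instance of (3) in dimension $k-1$ with the same $j$ and indices.'' If $A(n)$ means the first-coordinate slice from the definition of $fin^k$, this is false whenever $e_0 =0$: in that subcase $A(n)=\lbrace\overline{y}\in\omega^{k-1}\mid\overline{y}(e-1)<g(n,\overline{y}(e_1 -1),\dots,\overline{y}(e_{j-1}-1))\rbrace$, which depends on $n$ and is an instance of (3) with $j-1$ parameters (or of (1) when $j=1$), not with ``the same $j$ and indices.'' If instead you mean the last-coordinate slice --- which is what ``the last coordinate is genuinely free'' suggests, i.e.\ you are writing $A=A^\prime \times\omega$ with $A^\prime \in fin^{k-1}$ by the inductive hypothesis --- then the conclusion $A\in fin^k$ does not follow from anything you have established: membership in $fin^k$ is defined via first-coordinate slices, and none of (1), (2), (3) gives you the implication $B\in fin^{k-1}\Rightarrow B\times\omega\in fin^k$. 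That implication is true, but it requires its own (short) induction on $k$, which your proof never supplies; note that part (2) does not apply here, since the last-coordinate fibers of $A^\prime \times\omega$ are all of $\omega$, not finite. Either repair is quick --- add the $e_0 =0$ subcase to your first-coordinate analysis, or state and prove the product fact --- but as written the step ``so $A\in fin^k$'' in your second case is unjustified.
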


\begin{proof}
Since statements 1 and 2 are simple and even provable from Definition \ref{fin^k} and statement 3, only the last one will be proved. It will be inductively proved for $1\leq j<k<\omega$. For the basic case when $j=1$ and $k=2$ it is easy to see that $\lbrace(x,y)\in\omega^2 \mid y<g(x)\rbrace\in fin^2$, for all $g\in\omega^\omega$. 

Now suppose that we have proved it for all $j^\prime <k^\prime $ such that either $j^\prime <j$ or $k^\prime <k$, for some $1\leq j<k<\omega$. Take $e_0 <...<e_{j-1}<e<k$ as well as $g:\omega^j \rightarrow\omega$. If $A_g :=\lbrace\overline{x}\in\omega^k \mid\overline{x}(e)<g(\overline{x}(e_0 ),...,\overline{x}(e_{j-1}))\rbrace$ and $e_0 =0$, then $A_g (n)\in fin^{k-1}$, for all $n<\omega$, by hypothesis induction for $(j-1,k-1)$. If $0<e_0$, equally $A_g (n)\in fin^{k-1}$, for all $n<\omega$, by hypothesis induction for $(j,k-1)$. Either way $A\in fin^k$ and the lemma is proved. 
\end{proof}

In order to relate these paragraphs with the main subject of Section \ref{quotient}, notice that the relation $\subseteq^*$, though not explicit in Definition \ref{nci}, is fundamental in defining the Nowhere Centered ideal. Its relation with the ideal $fin\times fin$ has also been highlighted. Therefore it is not surprising that the ideals $fin^k$ help get higher dimensional relatives of $\mathcal{NC}$. 

\begin{mydef}
 If $k\geq 2$, define $$\mathcal{NC}^k :=\lbrace A\subseteq\omega^k \mid\forall(A_0 ,...,A_{k-2})\in([\omega]^\omega)^{k-1}~ \forall^\infty n<\omega ~\prod_{i<k-1}A_i \nsubseteq_{fin^{k-1}}A(n)\rbrace.$$
\end{mydef}

Observe that $\mathcal{NC}^2 =\mathcal{NC}$ and that $fin^k \subseteq\mathcal{NC}^k$, for all $k<\omega$. Also that $\mathcal{NC}^k$ is a downward closed coanalytic subset of $\mathcal{P}(\omega^k )$, for all $k\geq 2$. 
In order to prove that they are ideals we will need a lemma. 

\begin{lemma}
\label{cubes}
Take $k\geq 1$ and a family $\lbrace A_{n}^0 ,A_{n}^1 \mid n<\omega\rbrace\subseteq (fin^k )^+$. If there exists $(X_0 ,..., X_{k-1})\in([\omega]^\omega )^k$ such that $$\prod_{l<k}X_l \subseteq_{fin^k} A_{n}^0 \cup A_{n}^1 ,$$ for all $n<\omega$, then there exist $(Y_0 ,...,Y_{k-1})\in\prod_{l<k}[X_l]^\omega$, $Z\in[\omega]^\omega$ and $i<2$ such that $$\prod_{l<k}Y_l \subseteq_{fin^k} A_{n}^i ,$$ for all $n\in Z$.
\end{lemma}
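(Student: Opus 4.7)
My plan is to use a Fubini product of free ultrafilters to pick the right direction $i$, then build the box $\prod Y_l$ coordinate by coordinate via countable pseudointersections.

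First I fix, for each $l<k$, a free ultrafilter $\mathcal{U}_l$ on $\omega$ with $X_l\in\mathcal{U}_l$, and form the Fubini product $\mathcal{U}:=\mathcal{U}_0\otimes\cdots\otimes\mathcal{U}_{k-1}$ on $\omega^k$, defined so that $A\in\mathcal{U}$ iff $\lbrace m_0:A(m_0)\in\mathcal{U}_1\otimes\cdots\otimes\mathcal{U}_{k-1}\rbrace\in\mathcal{U}_0$. An easy induction on $k$ gives $fin^k\cap\mathcal{U}=\emptyset$: if $A\in fin^k$, then $A(m)\in fin^{k-1}$ for cofinitely many $m$, so by the inductive hypothesis the set of $m$ with $A(m)\in\mathcal{U}_1\otimes\cdots\otimes\mathcal{U}_{k-1}$ is finite, hence not in $\mathcal{U}_0$.

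Since $\prod_{l<k}X_l\in\mathcal{U}$ and $\prod_{l<k}X_l\setminus(A_n^0\cup A_n^1)\in fin^k$, the previous step gives $A_n^0\cup A_n^1\in\mathcal{U}$ for every $n$, and ultrafilter-ness yields $i_n<2$ with $A_n^{i_n}\in\mathcal{U}$. A pigeonhole on $\omega$ then produces $i<2$ and $Z\in[\omega]^\omega$ such that $A_n^i\in\mathcal{U}$ for every $n\in Z$.

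Now I recursively build $Y_l\in[X_l]^\omega$ for $l<k$. Having fixed $Y_0,\ldots,Y_{l-1}$, for every $n\in Z$ and every $\overline m=(m_0,\ldots,m_{l-1})\in\prod_{j<l}Y_j$ for which $A_n^i(m_0,\ldots,m_{l-1})\in\mathcal{U}_l\otimes\cdots\otimes\mathcal{U}_{k-1}$ (a condition arranged inductively by the previous $Y_j$'s), the set
$$B^l_{n,\overline m}:=\lbrace m_l<\omega\mid A_n^i(m_0,\ldots,m_l)\in\mathcal{U}_{l+1}\otimes\cdots\otimes\mathcal{U}_{k-1}\rbrace$$
belongs to $\mathcal{U}_l$; at $l=k-1$ this simply reads $\lbrace m_{k-1}:(m_0,\ldots,m_{k-1})\in A_n^i\rbrace\in\mathcal{U}_{k-1}$. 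The whole family $\lbrace B^l_{n,\overline m}\cap X_l\rbrace$ indexed by such pairs $(n,\overline m)$ is countable and centered within $\mathcal{U}_l$, and I take $Y_l\in[X_l]^\omega$ to be a pseudointersection (which always exists by the standard ZFC fact that countable centered subfamilies of $[\omega]^\omega$ admit infinite pseudointersections). Unfolding the Fubini definition one layer at a time then yields $\prod_{l<k}Y_l\subseteq_{fin^k}A_n^i$ for every $n\in Z$, as required.

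I expect the main obstacle to be the bookkeeping of this recursion: each $Y_l$ has to handle every $n\in Z$ and every partial tuple $\overline m$ from $\prod_{j<l}Y_j$ simultaneously. The escape is that at every step the indexing set is still countable and the relevant family sits inside a single ultrafilter $\mathcal{U}_l$, so a pseudointersection exists and can be chosen inside $X_l$, keeping the construction aligned with the Fubini structure.
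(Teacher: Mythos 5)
Your proof is correct, but it is organized quite differently from the paper's. The paper proves the lemma by induction on $k$: the base case is the ultrafilter-plus-pigeonhole argument on $\omega$, and the inductive step applies the $k$-dimensional statement recursively to the sections $A_n^0(m)\cup A_n^1(m)$, producing a decreasing chain of boxes $\prod_{1\le l\le k}Y_l^n$ together with signs $i_n$, and only at the very end takes coordinatewise pseudointersections and pigeonholes the $i_n$ to extract $Z$ and $i$. You instead decide $i$ and $Z$ \emph{first}, uniformly, by testing each $A_n^0\cup A_n^1$ against a single Fubini product $\mathcal{U}_0\otimes\cdots\otimes\mathcal{U}_{k-1}$ (using that $fin^k$ is disjoint from such a product), and then build $Y_0,\ldots,Y_{k-1}$ in one left-to-right pass, each $Y_l$ a pseudointersection of a countable subfamily of $\mathcal{U}_l$. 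The final unfolding works because $fin^k$ asks only that cofinitely many sections at each level be small, which is exactly what $Y_l\subseteq^* B^l_{n,\overline m}$ delivers (and sections at $m_l\notin Y_l$ are empty). Your route buys a cleaner separation of the two tasks (choosing the side versus shrinking the box) and avoids the nested recursion-within-induction bookkeeping of the paper; the paper's route stays entirely inside ultrafilters on $\omega$ and pseudointersections, never naming the Fubini product, which makes it slightly more self-contained. One phrasing to tighten: the membership condition $A_n^i(\overline m)\in\mathcal{U}_l\otimes\cdots\otimes\mathcal{U}_{k-1}$ is not ``arranged'' for all tuples $\overline m\in\prod_{j<l}Y_j$ by the earlier choices --- it is simply the defining condition of the (still countable) index set at stage $l$, and the earlier choices guarantee only that, for each $n\in Z$, it fails for at most finitely many $m_j\in Y_j$ at each level, which is all the $fin^k$ computation needs.
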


\begin{proof}
It will be inductively proved for $k\geq 1$. Suppose that $k=1$ and that we have $\lbrace A_{n}^0 ,A_{n}^1 \mid n<\omega\rbrace\subseteq[\omega]^\omega$ and $X\in[\omega]^\omega$ such that $X\subseteq^* A_{n}^0 \cup A_{n}^1$ for all $n<\omega$. Then any ultrafilter $\mathcal{U}$ containing $X$ will choose $i_n <2$ such that $A_{n}^{i_n}\in\mathcal{U}$, for all $n<\omega$. There exist $Z\in[\omega]^\omega$ and $i<2$ such that $i=i_n$, for all $n\in Z$. Take $Y\in[\omega]^\omega$, a pseudointersection of $\lbrace X\rbrace\cup\lbrace A_{n}^i \mid n\in Z\rbrace$ and the basic case is proved. 

Suppose now that the lemma is proved for some $k\geq 1$ and that we have $\lbrace A_{n}^0 ,A_{n}^1 \mid n<\omega\rbrace\subseteq (fin^{k+1})^+$ and $(X_0 ,..., X_{k})\in([\omega]^\omega )^{k+1}$ such that $$\prod_{l\leq k}X_l \subseteq_{fin^{k+1}} A_{n}^0 \cup A_{n}^1 ,$$ for all $n<\omega$. Observe that for all $n<\omega$ 
$$\prod_{1\leq l\leq k}X_l \subseteq_{fin^k} A_{n}^0 (m)\cup A_{n}^1 (m),$$ for almost all $m\in X_0$.  

Using the inductive hypothesis, we can recursively construct, for $n<\omega$, sequences $\lbrace (Y_{0}^n ,...,Y_{k}^n )\mid n<\omega\rbrace$ and $\langle i_n \mid n<\omega\rangle\in 2^\omega$ such that 

\begin{itemize}
    \item $Y_{l}^0 \in[X_l ]^\omega$, 

    \item $Y_{l}^{n+1}\in[Y_{l}^n ]^\omega$, for all $l\leq k$ and $n<\omega$, and 
    
    \item 
    $$\prod_{1\leq l\leq k}Y_{l}^n \subseteq_{fin^k}A_{n}^{i_n}(m),$$ for all $m\in Y_{0}^n$, for all $n<\omega$. 
\end{itemize}

For $l\leq k$, take $Y_l \in[\omega]^\omega$, a pseudointersection of the family $\lbrace Y_{l} \mid n<\omega\rbrace$. There exist $Z\in[\omega]^\omega$ and $i<2$ such that $i_n =i$, for all $n\in Z$. If $n\in Z$, then $$\prod_{1\leq l\leq k}Y_{l}^n \subseteq_{fin^k}A_{n}^i (m),$$ for almost all $m\in Y_{0}$. Therefore $$\prod_{l\leq k}Y_{l} \subseteq_{fin^{k+1}}A_{n}^i ,$$ for all $n\in Z$. 
\end{proof}









\begin{proposition}
\label{next_level}
For all $k\geq 2$ the family $\mathcal{NC}^k$ is a proper ideal of $\omega^k$.

\end{proposition}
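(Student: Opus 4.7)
The plan is to verify the three defining properties of a proper ideal: $\omega^k \notin \mathcal{NC}^k$, downward closure, and closure under binary unions. Properness is immediate: taking $(A_0,\ldots,A_{k-2}) = (\omega,\ldots,\omega)$, we have $(\omega^k)(n) = \omega^{k-1} = \prod_{i<k-1} A_i$, so $\prod A_i \subseteq_{fin^{k-1}} (\omega^k)(n)$ for every $n$, placing $\omega^k$ in $(\mathcal{NC}^k)^+$. Downward closure is equally direct from the definition: if $B \subseteq A$, then $B(n) \subseteq A(n)$, so a witness $(A_0,\ldots,A_{k-2})$ showing $B \notin \mathcal{NC}^k$ (infinitely many $n$ with $\prod A_i \subseteq_{fin^{k-1}} B(n)$) would also witness $A \notin \mathcal{NC}^k$.

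The main content is closure under finite unions, which I would establish contrapositively: assume $A^0 \cup A^1 \notin \mathcal{NC}^k$ and produce $i<2$ with $A^i \notin \mathcal{NC}^k$. The assumption gives infinite sets $Y_0,\ldots,Y_{k-2}$ and an infinite $Z_0 \subseteq \omega$ such that
$$\prod_{i<k-1} Y_i \subseteq_{fin^{k-1}} A^0(n)\cup A^1(n) \quad \text{for all } n \in Z_0.$$
This is precisely the hypothesis of Lemma \ref{cubes} applied at dimension $k-1$, except that the lemma requires both $A^0(n), A^1(n)$ to be $fin^{k-1}$-positive. I would therefore partition $Z_0$ into three pieces: $Z^0 = \{n \in Z_0 : A^1(n) \in fin^{k-1}\}$, $Z^1 = \{n \in Z_0 : A^0(n) \in fin^{k-1}\}$, and $Z^\ast = Z_0 \setminus (Z^0 \cup Z^1)$.

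At least one of these three sets is infinite. If $Z^i$ is infinite (for $i<2$), then $\prod Y_j \subseteq_{fin^{k-1}} A^{1-i}(n)$ for all $n$ in that infinite set, witnessing $A^{1-i} \notin \mathcal{NC}^k$ directly. If instead $Z^\ast$ is infinite, then the family $\{A^0(n), A^1(n) \mid n \in Z^\ast\} \subseteq (fin^{k-1})^+$ satisfies the full hypothesis of Lemma \ref{cubes}, which hands us infinite $Y_0' \subseteq Y_0, \ldots, Y_{k-2}' \subseteq Y_{k-2}$, an infinite $Z \subseteq Z^\ast$, and some $i<2$ with $\prod Y_j' \subseteq_{fin^{k-1}} A^i(n)$ for all $n \in Z$; this witnesses $A^i \notin \mathcal{NC}^k$.

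The main obstacle is the mismatch between the quantifier structure of membership in $\mathcal{NC}^k$ (``for some product of infinite sets, infinitely many slices contain it'') and the formulation of Lemma \ref{cubes} (which assumes positivity of each slice). This is resolved cleanly by the three-way split above, so the whole argument reduces to a routine invocation of Lemma \ref{cubes}; no further combinatorics is needed.
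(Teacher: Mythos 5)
Your proof is correct and follows essentially the same route as the paper: properness and downward closure are read off the definition, and closure under binary unions is handled contrapositively by extracting a witnessing product from $A^0\cup A^1\notin\mathcal{NC}^k$ and feeding it to Lemma~\ref{cubes}. The only difference is your three-way split of $Z_0$ to secure the $fin^{k-1}$-positivity hypothesis of that lemma --- a detail the paper's proof applies the lemma without checking; your handling of it is correct and slightly more careful.
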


\begin{proof}
Since $\omega^k \notin\mathcal{NC}^k$ and it is downward closed, it only remains to be proved that $A_0 \cup A_1 \in\mathcal{NC}^k$, for all $A_0 ,A_1 \in\mathcal{NC}^k$.  Take $A_0$ and $A_1$ subsets of $\omega^k$ such that $A_0 \cup A_1 \in \mathcal{P}(\omega^k )\setminus\mathcal{NC}^k$. There exists $(A_0 ,..., A_{k-2})\in([\omega]^\omega )^{k-1}$ such that $$\prod_{l<k-1}A_l \subseteq_{fin^k} A_{0}(n)\cup A_{1}(n),$$ for infinitely many $n<\omega$. Applying Lemma \ref{cubes} we get $(A^{\prime}_0 ,...A^{\prime}_{k-2})\in([\omega]^\omega )^{k-1}$ and $i<2$ such that $$\prod_{l<k-1}A^{\prime}_l \subseteq_{fin^k} A_{i}(n),$$ for infinitely many $n<\omega$, which means that $A_i \notin\mathcal{NC}^k$. Therefore, if $A_0 ,A_1 \in\mathcal{NC}^k$, then $A_0 \cup A_1 \in\mathcal{NC}^k$.

\end{proof}



Observe that for all $A\in(\mathcal{NC}^k )^+$ there exists $(X_0 ,...,X_{k-1})\in([\omega]^\omega )^k$ such that $\prod_{i<k}X_i \subseteq_{fin^k}A$. Obviously the set $$\lbrace\omega^{k-1}\times X\mid X\in[\omega]^\omega \rbrace$$ form a subalgebra of $\mathcal{P}(\omega^k)\slash\mathcal{NC}^k$ isomorphic to $\mathcal{P}(\omega)\slash fin$.  All results from Lemma \ref{aI=omega} to Theorem \ref{aronszjan2} have a version on these quotients. But of more importance to this section is how the increase in dimension upgrades the reach of Lemma \ref{aI=omega}. 

\begin{nota}
Take $1\leq j<k<\omega$ and $E\in[k-1]^j$. If $\lbrace A_{\overline{n}}\mid\overline{n}\in\omega^E \rbrace\subseteq \mathcal{P}(\omega)$, define $$\coprod_{\overline{n}\in\omega^E}A_{\overline{n}} :=\lbrace\overline{x}\in\omega^k\mid
\overline{x}(k-1)\in A_{\overline{x}\upharpoonright E}\rbrace.$$ 
\end{nota}

\begin{theorem}
\label{countable_iteration}
Take $1\leq j<k<\omega$ and $l_0 <...<l_{j-1}<k-1$, and define $E:=\lbrace l_i \mid i<j\rbrace$. Then for all family $\lbrace A_{\overline{n}}\mid\overline{n}\in\omega^j \rbrace\subseteq[\omega]^\omega$ the following equality holds in $\mathcal{P}(\omega^k )\slash\mathcal{NC}^k$: 

$$\coprod_{\overline{n}\in\omega^F}(\bigcap_{i_0 \leq\overline{n}(l_0 )}\bigcup_{i_1 \leq\overline{n}(l_1 )}...~ A_{(i_0 ,...,i_{j-1})})=$$

$$=\bigwedge_{n_0 <\omega}\bigvee_{n_1 <\omega}...~\omega^{k-1}\times A_{(n_0 ,...,n_{j-1})}.$$

\end{theorem}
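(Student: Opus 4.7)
The plan is to proceed by induction on $j \geq 1$, simultaneously establishing the stated equality (outermost operator $\bigwedge$/$\bigcap$) and its dual with flipped alternation (outermost $\bigvee$/$\bigcup$). The dual version is unavoidable because fixing the outermost index $n_0$ on the right-hand side transforms the remaining $(j-1)$-level expression into one starting with the opposite symbol. The base case $j=1$ amounts to Lemma~\ref{aI=omega} and its dual, transferred to $\mathcal{P}(\omega^k)/\mathcal{NC}^k$ with the indexing coordinate allowed to be $l_0$ rather than just the default one and with $\omega^{k-1}\times X$ in place of $\omega\times X$; both follow the rectangle-density argument of Lemma~\ref{aI=omega}, using that $\{\prod_{i<k} X_i : X_i\in[\omega]^\omega\}$ is dense in $\mathcal{P}(\omega^k)/\mathcal{NC}^k$.

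For the inductive step, write $L$ for the left-hand side and, for each $n_0$, set
$$C_{n_0} := \bigvee_{n_1}\bigwedge_{n_2}\cdots\,\omega^{k-1}\times A_{(n_0,n_1,\ldots,n_{j-1})}.$$
Applying the dual inductive hypothesis to $C_{n_0}$ with $E':=E\setminus\{l_0\}$ would yield $C_{n_0} = \{\bar x\in\omega^k : \bar x(k-1)\in f_{n_0}(\bar x\restriction E')\}$ for an explicit function $f_{n_0}:\omega^{E'}\to\mathcal{P}(\omega)$, and by unfolding definitions one checks that $L = \{\bar x : \bar x(k-1)\in\bigcap_{i_0\leq\bar x(l_0)} f_{i_0}(\bar x\restriction E')\}$. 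The containment $L\subseteq_{\mathcal{NC}^k} C_{n_0}$ follows immediately, since $\bar x\in L\setminus C_{n_0}$ forces $\bar x(l_0)<n_0$, and that condition defines a set in $fin^k$. The remaining task is to show that any $\mathcal{NC}^k$-lower bound of $\{C_{n_0}\}_{n_0<\omega}$ is $\mathcal{NC}^k$-below $L$.

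I would handle this by contradiction: given a lower bound $M$ with $M\setminus L\notin\mathcal{NC}^k$, rectangular density furnishes $\prod_{i<k} Y_i\subseteq_{fin^k} M\setminus L$ with each $Y_i\in[\omega]^\omega$, so $\prod Y_i\cap L\in fin^k$ and $\prod Y_i\subseteq_{\mathcal{NC}^k}C_{n_0}$ for every $n_0$. The hard part---and the main obstacle---is to translate the latter family of containments into threshold data on the indexing coordinates: by a parallel induction that iterates the argument at the heart of Lemma~\ref{aI=omega} (namely, that failure to be almost contained in any finite union/intersection yields, by diagonalization, an infinite pseudointersection witnessing $\mathcal{NC}^k$-positivity), and using that $f_{n_0}$ is built from an alternating chain of monotone operations, I expect to produce for each $n_0$ finitely many thresholds $n_i^*(n_0)\in Y_{l_i}$ ($1\leq i<j$) such that whenever each $\bar x(l_i)$ lies suitably beyond $n_i^*(n_0)$ in the direction dictated by the $i$th level's monotonicity, one has $Y_{k-1}\subseteq^* f_{n_0}(\bar x\restriction E')$. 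Once this is in hand, for any $x_0\in Y_{l_0}$ only finitely many $n_i^*(i_0)$ with $i_0\leq x_0$ are involved, so I can pick $x_i\in Y_{l_i}$ uniformly suitable for all of them, forcing $Y_{k-1}\subseteq^* \bigcap_{i_0\leq x_0} f_{i_0}(x_1,\ldots,x_{j-1})$; sweeping $x_0$ over $Y_{l_0}$ then produces a non-$fin^k$ subset of $\prod Y_i\cap L$, contradicting $\prod Y_i\cap L\in fin^k$. The bookkeeping of the alternating monotonicity through all $j$ levels is the only genuine difficulty; the underlying diagonalization is essentially the one from Lemma~\ref{aI=omega} iterated.
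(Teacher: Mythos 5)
Your plan follows the paper's proof essentially step for step: the simultaneous induction on $j$ together with the dual (outer $\bigvee$/$\bigcup$) form, the easy containment $L\subseteq_{\mathcal{NC}^k}C_{n_0}$ via the $fin^k$ set $\lbrace\overline{x}\mid\overline{x}(l_0)<n_0\rbrace$, and, for the reverse direction, the reduction to a rectangle followed by the level-by-level extraction of witnessing indices at the $\bigvee$-levels (the paper's functions $g_1,g_3,\dots$) and their monotonizations $h_1,h_3,\dots$, so that over the ``staircase'' region $\overline{x}(l_i)\geq h_i(\dots)$ the last-coordinate fibers of the rectangle are almost contained in the corresponding fibers of $L$. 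One caution on your final step: you must use the whole staircase region rather than a single choice of $x_1,\dots,x_{j-1}$ for each $x_0$, since a set in which coordinate $l_i$ is pinned to one value depending on the earlier coordinates is contained in a set of the form $\lbrace\overline{x}\mid\overline{x}(l_i)<g(\overline{x}(l_0),\dots)\rbrace\in fin^k$ and so cannot witness $\prod_{i<k}Y_i\cap L\notin fin^k$; your own ``whenever each $\overline{x}(l_i)$ lies suitably beyond the threshold'' formulation already yields the needed $fin^k$-positive set, exactly as in the paper's treatment of the set $C'$.
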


\begin{proof}
It will be inductively proved for $j\leq 1$. Take $k\leq 2$, $l<k-1$ and a family $\lbrace A_n \mid n <\omega\rbrace\subseteq[\omega]^\omega$. Define $$A:=\coprod_{\overline{n}\in\omega^{\lbrace l\rbrace}}\bigcap_{i\leq\overline{n}(l)}A_i.$$ If $n<\omega$, then $A\setminus(\omega^{k-1}\times A_n )\subseteq\lbrace\overline{x}\in\omega^k \mid\overline{x}(l)<n\rbrace$ which is an element of $\mathcal{NC}^k$. Therefore $A\subseteq_{\mathcal{NC}^k}\omega^{k-1}\times A_n$, for all $n<\omega$. Take $B_0 ,...,B_{k-1}\in[\omega]^\omega$ such that $$\prod_{i<k}B_i\subseteq_{fin^k}\omega^{k-1}\times A_n ,$$ for all $n<\omega$. Therefore $B_{k-1}\subseteq^* A_n $, for all $n<\omega$, and hence it follows that $$\prod_{i<k}B_i \setminus A\in fin^k .$$ We conclude that $A=\bigvee_{n<\omega}A_{(n)}$.

Suppose we have proved the theorem for some $j^\prime \geq 1$. For $j=j^\prime +1$, take $k\geq j+1$, $l_0 <...<l_{j-1}<k-1$, and a family $\lbrace A_{\overline{n}}\mid\overline{n}\in\omega^j \rbrace\subseteq[\omega]^\omega$. With $E:=\lbrace l_i \mid i<j\rbrace$. Define $$A:=\coprod_{\overline{n}\in\omega^F}\bigcap_{i_0 \leq\overline{n}(l_0)}\bigcup_{i_1 \leq\overline{n}(l_1)}... ~A_{(i_0 ,...,i_{j-1})}$$ and $$A_n :=\coprod_{\overline{n}\in\omega^{F\setminus\lbrace l_0 \rbrace}}\bigcup_{i_1 \leq\overline{n}(l_1)}\bigcap_{i_2 \leq\overline{n}(l_2)}... ~A_{(n,i_1 ,...,i_{j-1})},$$ for all $n<\omega$. By hypothesis induction $$A_n =\bigvee_{n_1 <\omega}\bigwedge_{n_2 <\omega}...~\omega^{k-1}\times A_{(n,n_1,...,n_{j-1})},$$ for all $n<\omega$. We need to prove that $A=\bigwedge_{n<\omega}A_n$. Take $n<\omega$ and $\overline{x}\in A$ such that $\overline{x}(l_0 )\geq n$. It follows that $$\overline{x}(k-1)\in\bigcup_{i_1 \leq\overline{x}(l_1 )}\bigcap_{i_2 \leq\overline{x}(l_2 )}...~A_{(n,i_1 ,...,i_{j-1})}$$ and hence that $\overline{x}\in A_n$.  We conclude that $A\subseteq_{\mathcal{NC}^k}A_n$, for all $n<\omega$. 

Now take $B_0 ,...,B_{k-1}\in[\omega]^\omega$ such that $$\prod_{i<k}B_i\subseteq_{fin^k}A_n,$$ for all $n<\omega$. Then for all $n_0 <\omega$ we have that $$\prod_{i<k}B_i \subseteq_{fin^k}\bigvee_{n_1 <\omega}\bigwedge_{n_2 <\omega}...~\omega^{k-1}\times A_{(n,n_1,...,n_{j-1})}.$$ Notice that we can construct $g_1 :\omega\rightarrow\omega$ and get $B_{i}^1 \in[B_i ]^\omega$, for $i<k$, such that for all $n_0 <\omega$ we have that $$\prod_{i<k}B_{i}^1 \subseteq_{fin^k}\bigwedge_{n_2 <\omega}\bigvee_{n_3 <\omega}...~\omega^k \times A_{(n_0 ,g_1 (n_0 ),n_2 ,...,n_{j-1})}.$$ Similarly we can get $B_{i}^3$, for $i<k$, and $g_3 :\omega^2 \rightarrow\omega$ such that for all $n_0 <\omega$ and for all $n_2 <\omega$ $$\prod_{i<k}B_{i}^3 \subseteq_{fin^k}\bigwedge_{n_4 <\omega}\bigvee_{n_5 <\omega}...~\omega^k \times A_{(n_0 ,g_1 (n_0 ),n_2 ,g_3 (n_0 ,n_1 ),...,n_{j-1})}.$$  Following this path we finally get $B_{i}^{\prime}\in [B_i ]^\omega$, for $i<k$, and $g_1 :\omega\rightarrow\omega$, $g_3 :\omega^2 \rightarrow\omega$, $g_5 :\omega^3 \rightarrow\omega$,..., for impair numbers $<j$, such that for all $n_0 ,n_2, n_4 ,... <\omega$ (as many variables as pair numbers $<j$), we have that $$\prod_{i<k}B_{i}^\prime \subseteq_{fin^k}\omega^{k-1}\times A_{(n_0 , g_1 (n_0),n_2 ,g_3 (n_0 ,n_2),n_4 ,g_5(n_0 ,n_2 ,n_4),....)}$$ and hence that $$B_{k-1}^\prime \subseteq^* A_{(n_0 , g_1 (n_0)),n_2 ,g_3 (n_0 ,n_2),n_4 ,g_5(n_0 ,n_2 ,n_4),....)}.$$ 

If $C:=\prod_{i<k}B_{i}^\prime \setminus A$, we need to prove that $C\in\mathcal{NC}^k$ to finish the proof. For impair $i<j$ define $h_i :\omega^{\frac{i+1}{2}}\rightarrow\omega$ as follows: $$h_i (n_0, n_1 ,...)=\max\lbrace g_i (i_0 ,i_1 ,...)\mid i_0 \leq n_0 , i_1 \leq n_1 ...\rbrace.$$ Since $$\lbrace\overline{x}\in\omega^k \mid\overline{x}(l_1 )<h_1 (\overline{x}(l_0 ))\vee\overline{x}(l_3 )<h_3 (\overline{x}(l_0 ),\overline{x}(l_2 ))\vee...\rbrace$$ is an element of $\mathcal{NC}^k$, to prove that $C\in\mathcal{NC}^k$ we just need to prove that $$C^\prime :=C\cap\lbrace\overline{x}\in\omega^k \mid\overline{x}(l_1 )\geq h_1 (\overline{x}(l_0 ))\wedge\overline{x}(l_3 )\geq h_3 (\overline{x}(l_0 ),\overline{x}(l_2 ))\wedge...\rbrace$$ lies in $\mathcal{NC}^k$. If $\overline{y}\in\omega^{k-1}$ and $\overline{y}(l_1 )\geq h_1 (\overline{y}(l_0 ))$, $\overline{y}(l_3 )\geq h_3 (\overline{y}(l_0 ),\overline{y}(l_2 ))$ and so forth, then $$\lbrace n<\omega\mid\overline{y}\frown(n)\in C^\prime \rbrace\subseteq B_{k-1}^\prime \setminus\bigcap_{i_0 \leq\overline{y}(l_0 )}\bigcup_{i_1 \leq\overline{y}(l_1)}...~A_{(i_0 ,...,i_{j-1})}$$ which is a finite set. Therefore $C^\prime \in\mathcal{NC}^k$ and we conclude the proof.  

\end{proof}

Theorem \ref{countable_iteration} is powerful enough to settle Question \ref{omega1_partitions} on $\mathcal{P}(\omega^k )\slash\mathcal{NC}^k$, for $k\geq 2$, while it is left open on $\mathcal{P}(\omega^2 )\slash\mathcal{NC}$. Whether these results give some light or cast shadows on the questions of Section \ref{quotient} is not known. 

\begin{theorem}
Let $T\subseteq\omega^{<\omega_1}$ be an Aronszajn tree. Take $\lbrace A_\sigma \mid \sigma\in T\rbrace\subseteq[\omega]^\omega$ such that if $\sigma\subseteq\tau\in T$, then $A_\tau \subseteq^* A_\sigma$ and that the set $\lbrace A_\sigma \mid \sigma\in T_\alpha \rbrace$ is a partition of $\omega$ for all $\alpha <\omega_1$. If $3\leq k<\omega$, then 
\begin{enumerate}
    \item the sets $X_\alpha :=\bigvee_{\sigma \in T_\alpha }\omega^{k-1}\times A_\sigma$, for $\alpha<\omega_1$, form a tower in $\mathcal{P}(\omega^k )\slash\mathcal{NC}^k$ and
    \item the sets $Y_\alpha := (\bigwedge_{\beta <\alpha }X_\beta )\setminus X_\alpha$, for $0 <\alpha <\omega_1$, form a partition in $\mathcal{P}(\omega^k )\slash\mathcal{NC}^k$.
\end{enumerate}
\end{theorem}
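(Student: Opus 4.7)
The plan is to lift the tower construction of Theorem \ref{aronszjan2} to $\mathcal{P}(\omega^k)/\mathcal{NC}^k$ and extract the partition from it, using Theorem \ref{countable_iteration} to make sense of the iterated $\bigwedge\bigvee$ expression in the definition of each $Y_\alpha$.

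For part (1), each $X_\alpha$ is well-defined by the $k$-dimensional analogue of Lemma \ref{aI=omega}: enumerating $T_\alpha=\{\sigma_n : n<\omega\}$, we take $\bigvee_n \omega^{k-1}\times A_{\sigma_n}$. Monotonicity is immediate, since for $\beta<\alpha$ and $\sigma\in T_\alpha$ the inclusion $A_\sigma\subseteq^* A_{\sigma\upharpoonright\beta}$ places $\omega^{k-1}\times A_\sigma$ beneath $X_\beta$. For the no-positive-lower-bound clause, I suppose $C\in(\mathcal{NC}^k)^+$ satisfies $C\subseteq_{\mathcal{NC}^k}X_\alpha$ for every $\alpha<\omega_1$ and, using density of rectangles, reduce to $C=\prod_{i<k}B_i$. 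Since the only non-trivial coordinate of $X_\alpha$ is the last one, the calculation of Theorem \ref{aronszjan2} carries over essentially verbatim and forces $B_{k-1}\subseteq^*\bigcup_{\sigma\in F_\alpha}A_\sigma$ for some finite $F_\alpha\subseteq T_\alpha$. Extending $\{B_{k-1}\}\cup\{\bigcup_{\sigma\in F_\alpha}A_\sigma : \alpha<\omega_1\}$ to a free ultrafilter and choosing $\sigma_\alpha\in F_\alpha$ with $A_{\sigma_\alpha}$ in the ultrafilter, the partition condition at each level $\beta$ forces $\sigma_\alpha\upharpoonright\beta=\sigma_\beta$ whenever $\beta<\alpha$; hence $\{\sigma_\alpha : \alpha<\omega_1\}$ is an $\omega_1$-branch of $T$, contradicting Aronszajn-ness.

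For part (2), I first verify that $\bigwedge_{\beta<\alpha}X_\beta$ exists in the quotient. Enumerating $\alpha=\{\beta_n : n<\omega\}$ and each $T_{\beta_n}=\{\sigma^n_m : m<\omega\}$, this rewrites as $\bigwedge_n\bigvee_m \omega^{k-1}\times A_{\sigma^n_m}$, which matches the form handled by Theorem \ref{countable_iteration} with $j=2$; that theorem requires indices $l_0<l_1<k-1$, which is precisely where the hypothesis $k\geq 3$ is used. Pairwise almost-disjointness is formal in the quotient: for $\alpha<\alpha'$, $Y_{\alpha'}\leq\bigwedge_{\beta<\alpha'}X_\beta\leq X_\alpha$, while $Y_\alpha\wedge X_\alpha=0$ by construction, so $Y_\alpha\wedge Y_{\alpha'}=0$. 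Maximality and the positivity of $\omega_1$-many $Y_\alpha$ come simultaneously from part (1): given a positive $C$ (reduced to a rectangle), the tower property yields a least $\alpha(C)<\omega_1$ with $C\setminus X_{\alpha(C)}\in(\mathcal{NC}^k)^+$; minimality gives $C\subseteq_{\mathcal{NC}^k}X_\beta$ for every $\beta<\alpha(C)$, hence $C\subseteq_{\mathcal{NC}^k}\bigwedge_{\beta<\alpha(C)}X_\beta$, and therefore $C\wedge Y_{\alpha(C)}=C\setminus X_{\alpha(C)}>0$, which in particular shows $Y_{\alpha(C)}>0$. Were the set of $\alpha$ with $Y_\alpha>0$ bounded by some $\gamma<\omega_1$, an induction on $\alpha\geq\gamma$ would yield $X_\alpha=X_\gamma$, making $X_\gamma$ a positive lower bound of the tower and contradicting part (1); hence the positive $Y_\alpha$'s form an $\mathcal{NC}^k$-mad family of size $\omega_1$.

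The main obstacle I expect is the careful justification of the equivalence $\prod_{i<k} B_i\subseteq_{\mathcal{NC}^k}X_\alpha\iff B_{k-1}\subseteq^*\bigcup_{\sigma\in F}A_\sigma$ for some finite $F\subseteq T_\alpha$, which requires unpacking the higher-dimensional analogue of Lemma \ref{aI=omega} and confirming that the ``last-coordinate only'' centered-family obstruction behind the 2-dimensional argument transfers cleanly to $\mathcal{NC}^k$.
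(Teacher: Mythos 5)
The paper states this theorem without proof, but the surrounding text makes clear the intended route: part (1) via the $\mathcal{NC}^k$-analogues of Lemma \ref{aI=omega} and Theorem \ref{aronszjan2}, and part (2) via Theorem \ref{countable_iteration} with $j=2$ (which is exactly where $k\geq 3$ enters). Your reconstruction follows precisely that route, and the details you supply — the reduction to $B_{k-1}\subseteq^*\bigcup_{\sigma\in F_\alpha}A_\sigma$, the ultrafilter/branch contradiction, and the least-$\alpha(C)$ argument for maximality together with cofinally many positive $Y_\alpha$ — are correct.
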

\begin{corollary}
$\overline{\mathfrak{a}}(\mathcal{NC}^k )=\omega_1$, for all $3\leq k<\omega$. 
\end{corollary}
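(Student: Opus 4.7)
For item 1 I would adapt the proof of Theorem~\ref{aronszjan2} to dimension $k$. The decreasing property follows from the tree structure: for $\alpha<\beta$, each $\sigma'\in T_\beta$ has a unique $\subseteq$-ancestor $\sigma\in T_\alpha$ with $A_{\sigma'}\subseteq^* A_\sigma$, so $\omega^{k-1}\times A_{\sigma'}\leq\omega^{k-1}\times A_\sigma\leq X_\alpha$ in the quotient; taking the sup over $\sigma'\in T_\beta$ gives $X_\beta\leq X_\alpha$. For the absence of a lower bound, suppose $\prod_{i<k}B_i\leq X_\alpha$ for every $\alpha<\omega_1$. Unpacking $X_\alpha=\bigvee_{\sigma\in T_\alpha}\omega^{k-1}\times A_\sigma$ as a $\coprod$-expression (the higher-dimensional analogue of Lemma~\ref{aI=omega}) yields, for each $\alpha$, a finite $F_\alpha\subseteq T_\alpha$ with $B_{k-1}\subseteq^*\bigcup_{\sigma\in F_\alpha}A_\sigma$. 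Extending $\{B_{k-1}\}\cup\{\bigcup_{\sigma\in F_\alpha}A_\sigma:\alpha<\omega_1\}$ to an ultrafilter $\mathcal{U}$, the partition property of $\{A_\sigma:\sigma\in T_\alpha\}$ singles out a unique $\sigma_\alpha\in F_\alpha$ with $A_{\sigma_\alpha}\in\mathcal{U}$, and this uniqueness forces $\langle\sigma_\alpha:\alpha<\omega_1\rangle$ to be a $\subseteq$-chain, yielding a cofinal branch of $T$ and contradicting its being Aronszajn.

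For item 2 the crucial new ingredient is that when $k\geq 3$ the infima defining the $Y_\alpha$ genuinely exist in the quotient. Enumerating $\{\beta<\alpha\}=\{\beta_n:n<\omega\}$ and each $T_{\beta_n}=\{\sigma_{n,m}:m<\omega\}$ one may rewrite
\[\bigwedge_{\beta<\alpha}X_\beta=\bigwedge_{n<\omega}\bigvee_{m<\omega}\omega^{k-1}\times A_{\sigma_{n,m}},\]
which is a two-level ($j=2$) alternation and therefore exists in $\mathcal{P}(\omega^k)/\mathcal{NC}^k$ by Theorem~\ref{countable_iteration} (applicable exactly because $j<k$). Pairwise almost-disjointness is immediate: for $\alpha<\gamma$ we have $Y_\gamma\leq\bigwedge_{\beta<\gamma}X_\beta\leq X_\alpha$ while $Y_\alpha\leq -X_\alpha$, so $Y_\alpha\wedge Y_\gamma=0$. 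For maximality, take $A\in(\mathcal{NC}^k)^+$; item 1 gives some $\alpha$ with $A\not\leq X_\alpha$, so let $\alpha$ be the least such (necessarily $\alpha>0$ because $X_0=\omega^k$). Then $A\leq X_\beta$ for every $\beta<\alpha$, hence $A\leq\bigwedge_{\beta<\alpha}X_\beta$, and consequently $A\wedge Y_\alpha=A\setminus X_\alpha>0$. Finally, applying this argument with $A=X_{\alpha_0}$ for every $\alpha_0<\omega_1$ forces the chosen $\alpha$ to satisfy $\alpha>\alpha_0$, so the set $\{\gamma:Y_\gamma>0\}$ is cofinal in $\omega_1$ and the nonzero $Y_\gamma$ form a partition of size $\omega_1$.

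The main obstacle is controlling the Boolean-algebraic step $A\leq X_\beta\ \forall\beta<\alpha\Rightarrow A\leq\bigwedge_{\beta<\alpha}X_\beta$ that drives the maximality argument: it requires the iterated infimum to genuinely exist in $\mathcal{P}(\omega^k)/\mathcal{NC}^k$, which is precisely what Theorem~\ref{countable_iteration} supplies. The dimension hypothesis $k\geq 3$ is indispensable at exactly this point: Proposition~\ref{limits_of_aronszjan} shows that the corresponding countable inf of sups fails to exist generically when $k=2$, which is why the same construction collapses in two dimensions and Question~\ref{omega1_partitions} remains open there.
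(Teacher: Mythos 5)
Your proposal is correct and takes exactly the route the paper intends but leaves unwritten: the tower argument is the $k$-dimensional rerun of Theorem~\ref{aronszjan2}'s branch-extraction, the existence of $\bigwedge_{\beta<\alpha}X_\beta$ is the $j=2$ case of Theorem~\ref{countable_iteration} (which is where $k\geq 3$ enters), and the maximality and cofinality arguments are the natural adaptation of Theorem~\ref{t=omega_1}. Your closing observation correctly locates the obstruction at $k=2$ in Propositions~\ref{gdelta,fsigma} and~\ref{limits_of_aronszjan}, matching the paper's own discussion of why Question~\ref{omega1_partitions} stays open in two dimensions.
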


From the proof of Theorem \ref{countable_iteration} it is clear how the first $k-1$ dimensions of $\omega^k$ give us room enough to finitely approximate in the last coordinate the desired algebraic operation. It is also clear that the same idea cannot be equally applied to a family indexed by $\omega^k$. If anything, Proposition \ref{gdelta,fsigma} hints to the possibility that there is a limit on the times that $\bigvee$ and $\bigwedge$, with countable indexes, can be iterated on $\mathcal{P}(\omega^k )\slash\mathcal{NC}^k$. 

\begin{conj}
\label{limits_of_dimension}
For all $2\leq k<\omega$, there exists $\lbrace A_{\overline{x}}\mid\overline{x}\in\omega^k \rbrace\subseteq[\omega]^\omega$ such that $$\bigwedge_{n_0 <\omega}\bigvee_{n_1 <\omega}...~\omega^{k-1}\times A_{(n_0 ,n_1 ,...,n_{k-1})}$$ does not exist in $\mathcal{P}(\omega^k )\slash\mathcal{NC}^k$. 
\end{conj}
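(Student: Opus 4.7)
My plan is to construct the counterexample family by lifting the argument behind Proposition \ref{gdelta,fsigma} to higher dimensions, with $k = 2$ essentially reducing to that proposition and $k \geq 3$ requiring a genuine $\mathcal{NC}^k$-analog.

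For the base case $k = 2$, I would take $T = \omega^{<\omega}$ and build $\lbrace A_\sigma : \sigma \in T \rbrace \subseteq [\omega]^\omega$ by iteratively refining partitions, so that $\lbrace A_\sigma : \sigma \in T_n \rbrace$ partitions $\omega$ at every level and $A_\tau \subseteq^* A_\sigma$ whenever $\sigma \subseteq \tau$. For each $n_0$ fix an enumeration $T_{n_0+1} = \lbrace \sigma^{n_0}_{n_1} : n_1 < \omega \rbrace$, and set $A_{(n_0, n_1)} := A_{\sigma^{n_0}_{n_1}}$. Then $\bigvee_{n_1 < \omega} \omega \times A_{(n_0, n_1)}$ coincides with $\bigvee_{\sigma \in T_{n_0+1}} \omega \times A_\sigma$, so the conjecture's iteration is, up to a harmless index shift, precisely the expression treated by Proposition \ref{gdelta,fsigma}. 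That proposition says its existence is equivalent to $[T] = \omega^\omega$ being a countable union of branch-spaces of finite-level subtrees, i.e.\ $\sigma$-compact, which it is not.

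For $k \geq 3$ I would establish an $\mathcal{NC}^k$-analog of Proposition \ref{gdelta,fsigma}: a characterization of when the fully alternating $k$-fold iteration $\bigwedge_{n_0} \bigvee_{n_1} \cdots \omega^{k-1} \times A_{(n_0, \ldots, n_{k-1})}$ exists in $\mathcal{P}(\omega^k)/\mathcal{NC}^k$, again in terms of a $\sigma$-compactness condition on an auxiliary tree. The key input is that Theorem \ref{countable_iteration} already realizes the inner $k-1$ alternations as explicit $\coprod$-expressions, so only the outermost alternation genuinely needs new analysis. Once this analog is in place, taking the full tree $\omega^{<\omega}$ with partitioned refining $A_\sigma$'s and re-indexing as above yields the required family.

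The main obstacle is the negative direction of this $\mathcal{NC}^k$-analog. The positive direction should follow by adapting the constructive portion of Theorem \ref{countable_iteration}. Showing that no element of $\mathcal{P}(\omega^k)/\mathcal{NC}^k$ can serve as the infimum or supremum of the iteration, however, requires a careful cube-by-cube argument, using the density of positive cubes $\prod_{i<k} X_i$ in $\mathcal{P}(\omega^k)/\mathcal{NC}^k$ together with an analysis of how such a cube can almost-fit inside the successively alternating terms. This is markedly more delicate than the two-dimensional rectangle argument in the proof of Proposition \ref{gdelta,fsigma}, because the inner alternations interact nontrivially with the cube structure and must be traversed coordinate by coordinate. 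I expect this cube-level bookkeeping to be the technical heart of the argument, and presumably the reason the statement is left as a conjecture here.
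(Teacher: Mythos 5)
This statement is labeled a \emph{Conjecture} in the paper and carries no proof, so there is nothing to compare your argument against; the only question is whether your proposal actually settles it, and it does not. Your $k=2$ reduction is sound: taking $T=\omega^{<\omega}$ with a refining system of partitions $\lbrace A_\sigma \mid \sigma\in T\rbrace$ and re-indexing each level $T_{n_0 +1}$ by $n_1$, the inner suprema exist by Lemma \ref{aI=omega}, the outer infimum over $n_0$ agrees with $\bigwedge_{n<\omega}\bigvee_{\sigma\in T_n}\omega\times A_\sigma$ (the omitted $n=0$ term is the top element), and Proposition \ref{gdelta,fsigma} then says this infimum exists iff $\omega^\omega$ is a countable union of branch sets of finitely-branching subtrees, i.e. $\sigma$-compact, which fails. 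That is a correct and complete argument for $k=2$, and it is worth observing that it turns the $k=2$ instance into a theorem already implicit in the paper.

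For $k\geq 3$, however, your proposal is a research plan rather than a proof. The entire content of the conjecture in those cases is concentrated in the step you defer: the ``$\mathcal{NC}^k$-analog of Proposition \ref{gdelta,fsigma}'' whose negative direction you would need is precisely the open problem, and no argument is supplied for it. Your observation that Theorem \ref{countable_iteration} realizes the inner $k-1$ alternations as concrete $\coprod$-sets is correct, but it changes the shape of the problem rather than solving it: the resulting sets $B_{n_0}$ are no longer of the form $\bigvee_{\sigma}\omega^{k-1}\times A_\sigma$ for a level of a tree, so the rectangle/maximal-extension argument in the proof of Proposition \ref{gdelta,fsigma} does not transfer, and one must analyze which positive cubes $\prod_{i<k}X_i$ sit below all $B_{n_0}$ simultaneously. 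You acknowledge this yourself (``presumably the reason the statement is left as a conjecture''), which is an accurate self-assessment: the proposal identifies the right obstacle but leaves the conjecture open for every $k\geq 3$.
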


We finish this section with a question about similar results for $\omega\leq\alpha<\omega_1$. Take $X$ a countable set and an ideal $\mathcal{I}$ such that there exists $\mathcal{A}\leq \mathcal{P}(X)\slash\mathcal{I}$ isomorphic to $\mathcal{P}(\omega)\slash fin$. For $\alpha<\omega_1$, recursively define \begin{itemize}
    \item $\Sigma_{0}^0 (\mathcal{A}):=\mathcal{A}$
    
    \item $\Pi_{\alpha}^0 (\mathcal{A}):=\lbrace A\in \mathcal{P}(X)\slash\mathcal{I}\mid A\in\Sigma_{\alpha}^0 (\mathcal{A})\rbrace$
    
    \item $\Sigma_{\alpha}^0 (\mathcal{A}):=\lbrace A\in \mathcal{P}(X)\slash\mathcal{I}\mid\exists\lbrace A_n \mid n<\omega\rbrace\subseteq\bigcup_{\beta<\alpha}\Pi_{\beta}^0 (\mathcal{A}) ~A=\bigvee_{n<\omega}A_n \rbrace.$
\end{itemize} For $\alpha<\omega_1$, we will say that $\mathcal{A}$ is $\alpha$-$\sigma$-\textit{closed}, if it is $\beta$-$\sigma$-\textit{closed}, for all $\beta<\alpha$, and for all $\lbrace A_n \mid n<\omega\rbrace\subseteq\bigcup_{\beta<\alpha}\Pi_{\beta}^0 (\mathcal{A})$ there exists $A\in \mathcal{P}(X)\slash\mathcal{I}$ such that $A=\bigvee_{n<\omega}A_n$. With this notation, Theorem \ref{countable_iteration} says that $\lbrace\omega^{k-1}\times X\mid X\in[\omega]^\omega \rbrace$ is $k-1$-$\sigma$-closed while Conjecture \ref{limits_of_dimension} says that it is not $k$-$\sigma$-closed, for all $2\leq k<\omega$. 

\begin{question}
Does there exist a sequence $\lbrace\mathcal{I}_{\alpha}\mid\alpha<\omega_1 \rbrace$, of definable ideals on countable sets $X_\alpha$, and a sequence $\lbrace\mathcal{A}_\alpha \mid\alpha<\omega_1 \rbrace$, of subalgebras of $\mathcal{P}(X_\alpha )\slash\mathcal{I}_\alpha$ isomorphic to $\mathcal{P}(\omega)\slash fin$, such that $\mathcal{A}_\alpha$ is $\alpha$-$\sigma$-closed, but not $\alpha+1$-$\sigma$-closed, for all $\alpha<\omega_1$?
\end{question}


\section{The ideal $\mathcal{NC}$ as a substructure of $\mathcal{P}(\omega\times\omega)$ modulo $[\omega\times\omega]^{<\omega}$}
\label{add_cov_non_cof}

In this section we will briefly study $\mathcal{NC}$ as such. Some close ideals and Kat\v{e}tov relations between them will be helpful. 

\begin{mydef}
 The Kat\v{e}tov-Blass order $\leq_{KB}$ on ideals is defined as follows: if $\mathcal{I}$ and $\mathcal{J}$ be two ideals on $\omega$,  then $\mathcal{I}\leq_{KB}\mathcal{J}$ if there exists $f:\omega\rightarrow\omega$ finite-to-one such that $f^{-1}[I]\in\mathcal{J}$ for all $I\in\mathcal{I}$. 
 
 If the function $f$ is not required to be finite-to-one, we get the Kat\v{e}tov relation $\mathcal{I}\leq_K \mathcal{J}$. 
\end{mydef}

\begin{mydef}
 \begin{itemize}
 
         \item $\Delta :=\lbrace(n,m)\in\omega\times\omega\mid m\leq n\rbrace$
         
         \item $\mathcal{ED}_{fin}:=\lbrace X\subseteq\Delta\mid\exists n<\omega~\forall m<\omega~\vert X(m)\vert\leq n\rbrace$
         
        \item A graph is a pair $(X,E)$, where $E\subseteq[X]^2$. We say that $Y\subseteq X$ induces a complete subgraph of $(X,E)$ if $[Y]^2 \subseteq E$. 
         
         \item $\mathcal{G}_c :=\lbrace E\subseteq[\omega]^2 \mid\forall Y\in[\omega]^\omega Y~does~not~induce~a~complete~subgraph\rbrace.$
     \end{itemize}
\end{mydef}

The ideal $\mathcal{ED}_{fin}$ can also be defined as the ideal on $\Delta$ generated by the functions from $\omega$ to $\omega$ bounded by the identity function. 

\begin{proposition}
\label{katetov relations}
The following relations hold: 

\begin{itemize} 

\item $fin\times fin\leq_{KB} \mathcal{NC}$ 

\item $\mathcal{ED}_{fin} \leq_{KB} \mathcal{NC}$

\item $\mathcal{NC}\leq_{KB}\mathcal{G}_c$.
\end{itemize}
\end{proposition}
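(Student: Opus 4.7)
The plan is to produce an explicit finite-to-one map for each of the three reductions; since the maps determine everything, the main work is to guess them correctly. The first item is immediate, the third is almost tautological once $[\omega]^2$ is identified with the strict upper triangle of $\omega\times\omega$, and the genuine content sits in the second, where the bounded-rows condition defining $\mathcal{ED}_{fin}$ has to be translated into an $\mathcal{NC}$-witness.

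For $fin\times fin\leq_{KB}\mathcal{NC}$, I would just use the identity on $\omega\times\omega$, which is certainly finite-to-one, and invoke the inclusion $fin\times fin\subseteq\mathcal{NC}$ already noted in the text.

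For $\mathcal{ED}_{fin}\leq_{KB}\mathcal{NC}$, I would set $f\colon\omega\times\omega\to\Delta$ by $f(n,m)=(\max(n,m),\min(n,m))$, which is $2$-to-$1$ off the diagonal and $1$-to-$1$ on it, with image $\Delta$. A direct computation yields $(f^{-1}[E])(n)=E(n)\cup\{m>n:n\in E(m)\}$, so assuming $|E(a)|\leq k$ for every $a<\omega$, the argument proceeds by contradiction: if some infinite $X\subseteq\omega$ made $\{(f^{-1}[E])(n):n\in X\}$ centered, picking $n_0<\dots<n_k$ in $X$ and any $m>n_k$ in the (infinite) intersection $\bigcap_{j\leq k}(f^{-1}[E])(n_j)$ would force, using $E(n_j)\subseteq\{0,\dots,n_j\}$, that $n_j\in E(m)$ for every $j\leq k$, contradicting $|E(m)|\leq k$. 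The only subtle step is choosing the map; the verification is just this pigeonhole observation.

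For $\mathcal{NC}\leq_{KB}\mathcal{G}_c$, I would take the injection $f\colon[\omega]^2\to\omega\times\omega$ given by $f(\{a,b\})=(\min\{a,b\},\max\{a,b\})$. Given $A\in\mathcal{NC}$, if some infinite $Y\subseteq\omega$ induced a complete subgraph of $f^{-1}[A]$, then for every $a\in Y$ one would have $Y\setminus(a+1)\subseteq A(a)$, so $Y\subseteq^*A(a)$, and the family $\{A(a):a\in Y\}$ would be centered with pseudointersection $Y$, contradicting $A\in\mathcal{NC}$. Hence $f^{-1}[A]\in\mathcal{G}_c$, and the main obstacle in the whole proposition really reduces to finding the $\max/\min$ map of the second item.
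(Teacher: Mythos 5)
Your proposal is correct and uses exactly the same three Kat\v{e}tov--Blass maps as the paper (the identity, $(n,m)\mapsto(\max,\min)$ into $\Delta$, and $\{n,m\}\mapsto(\min,\max)$ from $[\omega]^2$), with the verifications of items 1 and 3 essentially identical to the paper's. The only cosmetic difference is in item 2, where the paper reduces to the generating bounded functions and splits the preimage into a $fin\times fin$ part plus a disjoint-columns part, whereas you handle an arbitrary $E$ with $|E(a)|\leq k$ directly by a pigeonhole on $k+1$ columns; both verifications are valid.
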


\begin{proof}
Since $fin\times fin\subseteq\mathcal{NC}$, the Kat\v{e}tov-Blass inequality follows. 

For the second relation take the function $f:\omega\times\omega\rightarrow\Delta$ defined by the rule $$f(n,m)=(\max\lbrace n,m\rbrace,\min\lbrace n,m\rbrace),$$ for all $n,m<\omega$. Let $A\subseteq\Delta$ be a function. The set $f^{-1}[A]$ is clearly the union $$A\cup\lbrace (A(n),n)\mid n<\omega\rbrace.$$ The set $A$ is an element of $fin\times fin$. Notice that the second term of the union is equal to the set $$B:=\coprod_{m<\omega} A^{-1}(m).$$ Since $\lbrace A^{-1}(m)\mid m<\omega\rbrace$ is a pairwise disjoint family, it follows that $B\in\mathcal{NC}$. We conclude that $f$ is a finite-to-one Kat\v{e}tov function.  

For the last inequality consider the function $f:[\omega]^2 \rightarrow\omega\times\omega$ defined by the rule $$f(\lbrace n,m\rbrace)=(\min\lbrace n,m\rbrace,\max\lbrace n,m\rbrace).$$ Take $A\in\mathcal{NC}$ and $X\in[\omega]^\omega$. Since $A\in\mathcal{NC}$, it follows that there exists non-empty $F\in[X]^{<\omega}$ such that $\vert\bigcap_{i\in F}A(i)\vert<\omega$. In particular $\vert\bigcap_{i\in F}A(i)\cap X\vert<\omega$. Take $k\in X\setminus\bigcap_{i\in F}A(i)$ bigger than every element of $F$. Then there exists $i\in F$ such that $k\notin A(i)$. Since $f(\lbrace i,k\rbrace)=(i,k)\notin A$, it follows that $\lbrace i,k\rbrace\notin f^{-1}[A]$. Therefore $X$ does not induce a complete subgraph of $(\omega,f^{-1}[A])$, and we conclude that $f^{-1}[A]\in\mathcal{G}_c$. 
\end{proof}

Both the additivity and the cofinality of the Nowhere Centered ideal are well-known cardinals. In order to prove that its cofinality is equal to $\mathfrak{c}$ we will use the fact that if $\kappa<\mathfrak{c}$ and $\lbrace X_\alpha \mid\alpha<\kappa\rbrace$ is a family of infinite coinfinite subsets of $\omega$, then there is coinfinite $X\in[\omega]^\omega$ such that $X\nsubseteq^* X_\alpha$ for every $\alpha<\kappa.$ Indeed, consider any family $\lbrace A_\beta \mid\beta<\mathfrak{c}\rbrace$ such that $\lbrace\omega\setminus A_\beta \mid\beta<\mathfrak{c}\rbrace$ is some mad family. Suppose that for all $\beta<\mathfrak{c}$ there exists $\alpha<\kappa$ such that $ A_\beta \subseteq^* X_\alpha$. Since $\kappa<\mathfrak{c}$, there exist $\beta_0 <\beta_1 <\mathfrak{c}$ and $\alpha<\kappa$ such that $A_{\beta_0}\cup A_{\beta_1}\subseteq^* X_\alpha$, but this means that $\omega\subseteq^* X_\alpha$, which is a contradiction. 

\begin{theorem}
$add^* (\mathcal{NC})=\omega$ and $cof^* (\mathcal{NC})=\mathfrak{c}$.
\end{theorem}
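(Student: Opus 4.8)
The plan is to prove the two equalities separately, handling $add^*(\mathcal{NC})=\omega$ first since it is the more direct. To show $add^*(\mathcal{NC})\le\omega$ I would exhibit a countable family $\mathcal{F}\subseteq\mathcal{NC}$ with no single element of $\mathcal{NC}$ almost-containing all of them. A natural candidate is the family of columns $\{n\}\times\omega$ for $n<\omega$, or equivalently the ``diagonal'' sets built from an almost disjoint family as in the earlier remark that $Y=\coprod_{n<\omega}Y_n\in\mathcal{NC}\setminus(fin\times fin)$. The key point is that any $I\in\mathcal{NC}$ satisfies Observation~\ref{nowhere}, so $I$ cannot almost-contain all columns; more carefully, if $\{A_n\mid n<\omega\}$ is a partition of $\omega\times\omega$ into $\mathcal{NC}$-sets whose union is $\omega\times\omega$ (which is not in $\mathcal{NC}$), then no $I\in\mathcal{NC}$ can have $A_n\subseteq^* I$ for every $n$, for such an $I$ would have to be $\mathcal{NC}$-positive. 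Since $add^*(\mathcal{I})\ge\omega$ holds trivially for any ideal (a finite union of $\mathcal{NC}$-sets is again in $\mathcal{NC}$, being an ideal), the reverse inequality is automatic, giving $add^*(\mathcal{NC})=\omega$.

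For the cofinality, the inequality $cof^*(\mathcal{NC})\le\mathfrak{c}$ is immediate since $\mathcal{NC}\subseteq\mathcal{P}(\omega\times\omega)$ and $|\mathcal{P}(\omega\times\omega)|=\mathfrak{c}$, so $\mathcal{NC}$ itself is a cofinal family of size at most $\mathfrak{c}$. The substance is the lower bound $cof^*(\mathcal{NC})\ge\mathfrak{c}$. The plan is to assume, toward a contradiction, that $\{B_\xi\mid\xi<\kappa\}\subseteq\mathcal{NC}$ is cofinal with $\kappa<\mathfrak{c}$, and to produce a single $A\in\mathcal{NC}$ not almost-contained in any $B_\xi$. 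I would exploit the combinatorial fact stated just before the theorem: for $\kappa<\mathfrak{c}$ and any family $\{X_\alpha\mid\alpha<\kappa\}$ of infinite coinfinite subsets of $\omega$, there is a coinfinite $X\in[\omega]^\omega$ with $X\nsubseteq^* X_\alpha$ for all $\alpha$. The idea is to encode each potential dominating set $B_\xi$ by its behaviour along a fixed column or by one of its sections $B_\xi(n)$, and to build $A$ of a simple type, say $A=\coprod_{n<\omega}Z_n$ for a carefully chosen almost disjoint or diagonal family, so that membership $A\subseteq^* B_\xi$ would force a containment $Z_n\subseteq^* B_\xi(n)$ that the diagonalization forbids.

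The main obstacle will be bridging the two-dimensional structure of $\mathcal{NC}$ with the one-dimensional diagonalization lemma that is quoted. A set $A\in\mathcal{NC}$ being almost contained in $B_\xi$ (modulo $[\omega\times\omega]^{<\omega}$, i.e. modulo finitely many points, \emph{not} modulo $\mathcal{NC}$) is a strong condition: $A\subseteq^* B_\xi$ means $A\setminus B_\xi$ is finite, so for all but finitely many columns $n$ we need $A(n)\subseteq^* B_\xi(n)$ with the exceptional finite error summing to something finite overall. The delicate part is that the quoted lemma diagonalizes against a family of subsets of a single copy of $\omega$, whereas here I must diagonalize simultaneously against $\kappa$ many $B_\xi$, each contributing countably many sections $B_\xi(n)$. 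I would therefore reduce to a single application of the lemma by fixing one column $n_0$ where infinitely many $B_\xi(n_0)$ are coinfinite, applying the diagonalization on that column to get $Z$ with $Z\nsubseteq^* B_\xi(n_0)$ for all relevant $\xi$, and then defining $A$ to put $Z$ (or an infinite piece of it) into the appropriate sections so that $A\subseteq^* B_\xi$ fails for every $\xi$. Verifying that the resulting $A$ genuinely lies in $\mathcal{NC}$—that along every infinite column set the relevant sections fail to be centered—and that the finitely-many-sections escaping the diagonalization do not spoil the argument, is where the careful bookkeeping lives; this is the step I expect to demand the most attention.
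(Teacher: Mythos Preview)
Your treatment of $add^*(\mathcal{NC})=\omega$ via the columns $\{n\}\times\omega$ matches the paper exactly: if $\{n\}\times\omega\subseteq^* I$ for every $n$ then each section $I(n)$ is cofinite, so $\{I(n)\mid n\in X\}$ is centered for every infinite $X$ and $I\notin\mathcal{NC}$. (Your alternative phrasing with an arbitrary partition $\{A_n\}$ of $\omega\times\omega$ into $\mathcal{NC}$-sets is not justified by the one line you give; stick with the columns.)

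The real gap is in your plan for $cof^*(\mathcal{NC})=\mathfrak{c}$. A ``single application of the lemma by fixing one column $n_0$'' cannot succeed: on column $n_0$ you can only diagonalize against those $\xi$ with $B_\xi(n_0)$ coinfinite, and the remaining $\xi$---potentially $\kappa$ many, not ``finitely many sections''---escape entirely, since any infinite $Z$ satisfies $Z\subseteq^* B_\xi(n_0)$ whenever $B_\xi(n_0)$ is cofinite. The paper instead runs a recursive construction down \emph{all} columns. One builds pairwise disjoint infinite sets $B_0,B_1,\ldots\subseteq\omega$, choosing $B_m$ inside the shrinking reservoir $C_m:=\omega\setminus\bigcup_{i<m}B_i$ by applying the diagonalization lemma inside $C_m$ against $\{A_\alpha(m)\cap C_m:\alpha\in K_m\}$, where $K_m:=\{\alpha:A_\alpha(m)\not\supseteq^* C_m\}$. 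Then $B:=\coprod_{m<\omega} B_m\in\mathcal{NC}$ because the $B_m$ are pairwise disjoint. The key idea your outline does not reach is why every $\alpha<\kappa$ lands in some $K_n$: the sets $C_m$ form a decreasing chain of infinite sets, hence a centered family, so if $A_\alpha(m)\supseteq^* C_m$ held for every $m$ then $\{A_\alpha(m)\mid m<\omega\}$ would itself be centered, contradicting $A_\alpha\in\mathcal{NC}$. It is precisely this interplay between the reservoirs $C_m$ and the definition of $\mathcal{NC}$ that makes the argument go through.
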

\begin{proof}
Take the family $\mathcal{A}:=\lbrace\lbrace n\rbrace\times\omega\mid n<\omega\rbrace$ which is a subset of $\mathcal{NC}$. Clearly if $A\subseteq\omega\times\omega$ and $\lbrace n\rbrace\times\omega\subseteq^* A$, for all $n<\omega$, then $A\in\mathcal{NC}^+$. Therefore $\mathcal{A}$ witnesses that $add^* (\mathcal{NC})=\omega$. 

Now take $\kappa <\mathfrak{c}$ and $\mathcal{C}:=\lbrace A_\alpha \mid\alpha <\kappa\rbrace$ a subfamily of $\mathcal{NC}$. Consider the family $K_0 :=\lbrace\alpha <\kappa\mid A_\alpha (0)\nsupseteq^* \omega\rbrace$ and define $C_0 :=\omega$. Since $\kappa<\mathfrak{c}$, there exists $B_0 \subseteq\omega$ such that $\vert B_0 \vert=\vert\omega\setminus B_0 \vert=\omega$ and that $B_0 \nsubseteq^* A_\alpha (0)$ for all $\alpha\in K_0$. Suppose that for some $0<m<\omega$ we have defined a disjoint family $\lbrace B_i \mid i<m\rbrace\subseteq[\omega]^\omega$ such that $$C_m :=\omega\setminus\bigcup_{i<m}B_i$$ is infinite. Take $K_m :=\lbrace\alpha <\kappa\mid A_\alpha (m)\nsupseteq^* C_m \rbrace$. Since $\kappa <\mathfrak{c}$, there exists $B_m \subseteq C_m $ such that $\vert B_m \vert=\vert C_m \setminus B_m \vert=\omega$ and that $B_m \nsubseteq^* A_\alpha (m)$ for all $\alpha\in K_m$. Thus we can recursively construct a centered family $\lbrace C_m \mid m<\omega\rbrace$ and a disjoint family $\lbrace B_m \mid m<\omega\rbrace$ such that $B_m \subseteq C_m$, for all $m<\omega$, and such that $B_m \nsubseteq^* A_\alpha (m)$, for all $\alpha\in K_m$, for all $m<\omega$.

Define $B:=\coprod_{m<\omega}B_m $, which is clearly an element of $\mathcal{NC}$. Take $\alpha <\kappa$. Since $\lbrace C_m \mid m<\omega\rbrace$ is a centered family and $A_\alpha$ is an element of $\mathcal{NC}$, there exists $n<\omega$ such that $\alpha\in K_n$. Then $B_n \nsubseteq^* A_\alpha (n)$, and hence it follows that $B\nsubseteq^* A_\alpha$. Therefore $B$ witnesses that $\mathcal{C}$ is not a cofinal subset of $\mathcal{NC}$ and we conclude that $cof^* (\mathcal{NC})=\mathfrak{c}$.  
\end{proof}

Since the covering and the uniformity of $\mathcal{NC}$ are not as straightforward as the other two cardinal invariants we will use the following result for helping us to bound them.

\begin{lemma}
\label{cov and non inequalities}
 Let $\mathcal{I}$ and $\mathcal{J}$ be ideals on a countable set. If $\mathcal{I}\leq_K \mathcal{J}$, then $cov^* (\mathcal{J})\leq cov^* (\mathcal{I})$ and $non^* (\mathcal{I})\leq non^* (\mathcal{J})$. 
\end{lemma}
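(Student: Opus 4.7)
The plan is to transport minimum-size witnessing families along a Kat\v{e}tov function $f:\omega\to\omega$ realizing $\mathcal{I}\leq_K\mathcal{J}$, pulling back in the covering direction and pushing forward in the uniformity direction. Both halves rest on one elementary lifting observation: whenever $f[B]$ and $A\cap f[B]$ are both infinite, choosing a distinct preimage in $B$ for each point of $A\cap f[B]$ exhibits $|B\cap f^{-1}[A]|=\omega$.

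For $cov^*(\mathcal{J})\leq cov^*(\mathcal{I})$, I would fix a covering family $\mathcal{F}\subseteq\mathcal{I}$ of size $cov^*(\mathcal{I})$ and form the pullback $\mathcal{F}':=\{f^{-1}[A]:A\in\mathcal{F}\}$, which lies in $\mathcal{J}$ by the defining property of $\leq_K$. Given any $B\in[\omega]^\omega$ on the $\mathcal{J}$-side, applying the covering property of $\mathcal{F}$ to $f[B]$ (when it is infinite) and invoking the lifting observation yields some $A\in\mathcal{F}$ with $|B\cap f^{-1}[A]|=\omega$, so that $\mathcal{F}'$ is a covering family of $\mathcal{J}$ of the required size.

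For $non^*(\mathcal{I})\leq non^*(\mathcal{J})$, I would fix a family $\mathcal{X}\subseteq[\omega]^\omega$ of size $non^*(\mathcal{J})$ not met infinitely by any single element of $\mathcal{J}$, and form the pushforward $\mathcal{X}':=\{f[B]:B\in\mathcal{X},\,|f[B]|=\omega\}$. If some $A\in\mathcal{I}$ met every element of $\mathcal{X}'$ infinitely, the lifting observation would produce $f^{-1}[A]\in\mathcal{J}$ meeting every $B\in\mathcal{X}$ of infinite $f$-image infinitely, contradicting the choice of $\mathcal{X}$.

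The main obstacle I expect is the bookkeeping when $f$ is not finite-to-one, so that $f[B]$ can be finite for some infinite $B$. When $f$ is finite-to-one (as in the applications via Proposition \ref{katetov relations}), the complication does not arise and both arguments finish at once. In general, any $B$ with $f[B]$ finite must meet some fiber $f^{-1}(\{x\})$ infinitely; since $\{x\}\in\mathcal{I}$ forces $f^{-1}(\{x\})\in\mathcal{J}$, the at most countably many infinite fibers of $f$ can be handled as a separate countable augmentation without perturbing the cardinality estimates, which are always at least $\aleph_0$.
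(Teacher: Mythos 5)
Your core construction --- pulling a covering family back and pushing a non-hit family forward along the Kat\v{e}tov map, via the observation that $\vert A\cap f[B]\vert=\omega$ lifts to $\vert f^{-1}[A]\cap B\vert=\omega$ --- is the standard argument, and it proves both inequalities in full when $f$ is finite-to-one, i.e.\ for $\leq_{KB}$, which is the only case the paper ever invokes (all three relations in Proposition \ref{katetov relations} are witnessed by finite-to-one maps). Your patch for general $\leq_K$ is also sound for the covering half: adjoining the countably many infinite fibers $f^{-1}(\lbrace x\rbrace)$ --- each of which lies in $\mathcal{J}$ because $\lbrace x\rbrace\in\mathcal{I}$, granting the usual convention that ideals contain all finite sets --- absorbs every $B$ with finite image, and $cov^* (\mathcal{I})\geq\aleph_0$ keeps the cardinality unchanged.

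The gap is in the uniformity half for non-finite-to-one $f$. There the ``countable augmentation'' would have to be carried out on the $\mathcal{J}$-side: from $A\in\mathcal{I}$ hitting every member of $\mathcal{X}'$ you obtain $f^{-1}[A]$, which hits every $B\in\mathcal{X}$ of infinite image, but to contradict the choice of $\mathcal{X}$ you need a \emph{single} element of $\mathcal{J}$ hitting all of $\mathcal{X}$; the members of $\mathcal{X}$ with finite image are only guaranteed to be hit by individual fibers, and the union of the countably many relevant fibers need not belong to $\mathcal{J}$. This cannot be repaired, because the inequality $non^* (\mathcal{I})\leq non^* (\mathcal{J})$ actually fails for $\leq_K$. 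Take $\mathcal{I}$ to be any $\omega$-hitting ideal on $\omega$ (so $non^* (\mathcal{I})>\aleph_0$), e.g.\ the summable ideal, let $\mathcal{J}$ be the ideal on $\omega\times\omega$ of all sets contained in $(I\times\omega)\cup F$ with $I\in\mathcal{I}$ and $F$ finite, and let $f$ be the first-coordinate projection. Then $f^{-1}[I]=I\times\omega\in\mathcal{J}$ for every $I\in\mathcal{I}$, so $\mathcal{I}\leq_K\mathcal{J}$; yet the countable family of columns $\lbrace\lbrace n\rbrace\times\omega\mid n<\omega\rbrace$ witnesses $non^* (\mathcal{J})\leq\aleph_0 <non^* (\mathcal{I})$, since any $J\in\mathcal{J}$ meets every column over a point outside the corresponding $I$ in a finite set only. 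So the uniformity clause genuinely requires the Kat\v{e}tov--Blass hypothesis (under which your $\mathcal{X}_0$ is empty and your argument closes); this is how the lemma is applied in Theorem \ref{cov(M),b<non(NC)<r} and the consistency result following it, so nothing downstream is affected, but your final paragraph's claim that the general case reduces to a harmless countable bookkeeping step is not correct as stated.
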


\begin{theorem}
\label{covering}
$\min\lbrace\mathfrak{b},\mathfrak{s}\rbrace\leq cov^*(\mathcal{NC})\leq\mathfrak{b}$. 
\end{theorem}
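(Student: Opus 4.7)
The plan is to prove the two inequalities separately. For the upper bound $cov^*(\mathcal{NC})\leq\mathfrak{b}$, I will fix a $\leq^*$-unbounded family $\{f_\alpha : \alpha<\mathfrak{b}\}\subseteq\omega^\omega$ and, by the standard replacement $f_\alpha\mapsto(n\mapsto n+\max_{k\leq n}f_\alpha(k))$, assume each $f_\alpha$ is strictly increasing. The proposed covering family is $\mathcal{F}:=\{A_\alpha : \alpha<\mathfrak{b}\}\cup\{\{n\}\times\omega : n<\omega\}$, where $A_\alpha := \{(n,m) : m\leq f_\alpha(n)\}\in fin\times fin\subseteq\mathcal{NC}$ and each column $\{n\}\times\omega$ lies in $\mathcal{NC}$ (only one of its columns is infinite, so the relevant column families contain $\emptyset$ and are never centered). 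Given $B\in[\omega\times\omega]^\omega$, if some $B(n_0)$ is infinite then $\{n_0\}\times\omega$ covers $B$; otherwise every $B(n)$ is finite, $X:=\{n : B(n)\neq\emptyset\}$ is infinite, and I want some $\alpha$ with $f_\alpha(n)\geq g(n)$ for infinitely many $n\in X$, where $g(n):=\min B(n)$.

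The main obstacle of the upper bound is that a $\leq^*$-unbounded family is not automatically unbounded on every infinite $X$. I plan to exploit strict monotonicity: enumerating $X=\{x_0<x_1<\cdots\}$, setting $k(n):=\min\{k : x_k\geq n\}$, and defining $\tilde g(n):=g(x_{k(n)})\in\omega^\omega$, I pick $\alpha$ with $f_\alpha\nleq^*\tilde g$. For each $n$ with $f_\alpha(n)>\tilde g(n)$, monotonicity gives $f_\alpha(x_{k(n)})\geq f_\alpha(n)>g(x_{k(n)})$, so $(x_{k(n)},g(x_{k(n)}))\in A_\alpha\cap B$; the fibers of $k(\cdot)$ being finite forces infinitely many distinct $k$ to appear.

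For the lower bound $\min\{\mathfrak{b},\mathfrak{s}\}\leq cov^*(\mathcal{NC})$, given $\{A_\alpha : \alpha<\kappa\}\subseteq\mathcal{NC}$ with $\kappa<\min\{\mathfrak{b},\mathfrak{s}\}$, I will first apply splitting: the collection $\{A_\alpha(n) : \alpha<\kappa,\, n<\omega\}$ has size $\kappa<\mathfrak{s}$, so it fails to split some $Y\in[\omega]^\omega$, meaning that for each $\alpha,n$ either $Y\subseteq^* A_\alpha(n)$ or $|Y\cap A_\alpha(n)|<\omega$. Setting $X_\alpha:=\{n : Y\subseteq^* A_\alpha(n)\}$, the hypothesis $A_\alpha\in\mathcal{NC}$ forces $X_\alpha$ to be finite, since otherwise $\{A_\alpha(n) : n\in X_\alpha\}$ would be centered (each member containing $Y$ modulo finite).

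Then I will transport along the increasing bijection $\phi:\omega\to Y$: the sets $\tilde A_\alpha := \{(n,k) : \phi(k)\in A_\alpha(n)\}$ all lie in $fin\times fin$, since only finitely many of their columns (those indexed by $X_\alpha$) are infinite. The bounding functions $f_\alpha(n):=\max\tilde A_\alpha(n)+1$ (with value $0$ for $n\in X_\alpha$ or where $\tilde A_\alpha(n)=\emptyset$) form a family of size $\kappa<\mathfrak{b}$ and are therefore $\leq^*$-dominated by some $h\in\omega^\omega$. I expect the witness to be $B:=\{(n,\phi(h(n))) : n<\omega\}$: this is an infinite graph, and for each $\alpha$, $B\cap A_\alpha$ is contained in the finite set of pairs $(n,\phi(h(n)))$ with $n\in X_\alpha$ or $f_\alpha(n)>h(n)$, contradicting that $\{A_\alpha : \alpha<\kappa\}$ is a covering family.
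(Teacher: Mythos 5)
Your proof is correct, but it takes a more self-contained route than the paper, which disposes of the theorem in two lines by quoting known facts through Kat\v{e}tov reductions: the upper bound comes from $fin\times fin\leq_{KB}\mathcal{NC}$ together with $cov^*(fin\times fin)=\mathfrak{b}$, and the lower bound from $\mathcal{NC}\leq_{KB}\mathcal{G}_c$ together with $\min\{\mathfrak{b},\mathfrak{s}\}\leq cov^*(\mathcal{G}_c)$ (both cited from Meza's thesis), using the monotonicity of $cov^*$ under $\leq_K$. Your upper bound is essentially an unwinding of the first reduction --- you exhibit a covering family of size $\mathfrak{b}$ sitting inside $fin\times fin\subseteq\mathcal{NC}$, and your monotonicity trick with $k(n)$ correctly handles the standard pitfall that an unbounded family need not be unbounded along an arbitrary infinite set of columns. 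Your lower bound is genuinely different: instead of passing through the graph ideal $\mathcal{G}_c$, you argue directly on $\mathcal{NC}$, first using $\kappa<\mathfrak{s}$ to find a single $Y$ that decides every column $A_\alpha(n)$, then using the definition of $\mathcal{NC}$ to conclude that each $X_\alpha=\{n: Y\subseteq^* A_\alpha(n)\}$ is finite (an infinite $X_\alpha$ would give a centered column family), and finally using $\kappa<\mathfrak{b}$ to dominate the resulting $fin\times fin$ traces and produce a graph $B$ almost disjoint from every $A_\alpha$. What the paper's approach buys is brevity and reuse of the Kat\v{e}tov machinery it has already set up in Proposition~4.3; what yours buys is a proof readable without external references, and one that makes visible exactly where the ``nowhere centered'' hypothesis is used. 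All steps check out, including the finiteness of the fibers of $k(\cdot)$ in the upper bound and the verification that $B\cap A_\alpha$ is finite in the lower bound.
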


\begin{proof}

Since $\min\lbrace\mathfrak{b,s}\rbrace\leq cov^* (\mathcal{G}_c)$ and $cov^* (fin\times fin)=\mathfrak{b}$ (see Chapter 1 of \cite{davidmeza}), the theorem follows from Proposition \ref{katetov relations} and Lemma \ref{cov and non inequalities}. 
\end{proof}

Recall that a family $\mathcal{F}\subseteq[\omega]^\omega$ is said to be $\omega$-\textit{hitting} if for all countable family $\lbrace X_n \mid n<\omega\rbrace\subseteq[\omega]^\omega$ there exists $X\in\mathcal{F}$ such that $\vert X\cap X_n \vert=\omega$, for all $n<\omega$. 

\begin{theorem}
\label{cov(NC)<b}
It is consistent that $\omega_1 =\mathfrak{s}=cov^* (\mathcal{NC})<\mathfrak{b}=\omega_2$.
\end{theorem}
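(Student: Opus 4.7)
My plan is to apply the Katětov--Blass inequality $\mathcal{ED}_{fin} \leq_{KB} \mathcal{NC}$ from Proposition \ref{katetov relations} together with Lemma \ref{cov and non inequalities}, which yields the upper bound $cov^*(\mathcal{NC}) \leq cov^*(\mathcal{ED}_{fin})$. The task then reduces to producing a model in which $\mathfrak{s} = \omega_1$, $\mathfrak{b} = \omega_2$, and $cov^*(\mathcal{ED}_{fin}) = \omega_1$; given this, the lower bound $\min\{\mathfrak{b},\mathfrak{s}\} \leq cov^*(\mathcal{NC})$ from Theorem \ref{covering} closes the chain
\[
\omega_1 = \min\{\mathfrak{b},\mathfrak{s}\} \leq cov^*(\mathcal{NC}) \leq cov^*(\mathcal{ED}_{fin}) = \omega_1,
\]
so that $cov^*(\mathcal{NC}) = \omega_1 < \mathfrak{b} = \omega_2$, as desired.

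I would construct the model via a countable support iteration of length $\omega_2$ over a CH ground model $V$. Under CH, one routinely fixes both a splitting family $\mathcal{S} = \{S_\alpha : \alpha < \omega_1\} \subseteq [\omega]^\omega$ and an $\mathcal{ED}_{fin}$-cover $\mathcal{C} = \{C_\alpha : \alpha < \omega_1\} \subseteq \mathcal{ED}_{fin}$ in $V$. The iterand at each stage should be a proper forcing that adds a dominating real (so that $\mathfrak{b} = \omega_2$ in the final extension) while having a Laver-style preservation property controlling new reals by ground-model slaloms. The canonical choice is Laver forcing, whose Laver property is preserved through countable support iterations of proper forcings by Shelah's preservation theorem; standard arguments then give that $\mathcal{S}$ remains splitting in $V[G]$, so $\mathfrak{s} = \omega_1$ there.

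The main obstacle is the preservation of the $\mathcal{ED}_{fin}$-cover through the iteration. For a new infinite $B \subseteq \Delta$, one needs $\alpha < \omega_1$ with $|C_\alpha \cap B| = \omega$. The strategy is to use the Laver property on an appropriate coding of $B$ to extract a ground-model slalom $\sigma$ of controlled width such that $B \subseteq^* \coprod_{n<\omega}\sigma(n)$ modulo a set captured by the $\mathcal{ED}_{fin}$ structure; then the covering property of $\mathcal{C}$ in $V$, applied to this slalom (which itself lies in $\mathcal{ED}_{fin}$ once $\sigma$ has bounded width), supplies the required $C_\alpha$. The delicate step is propagating this preservation through countable-cofinality limit stages of the CS iteration, which must be handled via the standard preservation-theorem machinery for the Laver property, as developed in Bartoszyński--Judah.
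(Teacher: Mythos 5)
Your reduction to $cov^*(\mathcal{ED}_{fin})$ cannot work: the target model you describe does not exist. A covering family for $\mathcal{ED}_{fin}$, after decomposing each bounded-width member into finitely many total functions below the diagonal, yields a family $G\subseteq\prod_{k}(k+1)$ such that every $g$ in that product is infinitely often equal to some member of $G$; by Bartoszy\'nski's characterization the least size of such a family is $non(\mathcal{M})$, so $cov^*(\mathcal{ED}_{fin})\geq non(\mathcal{M})\geq\mathfrak{b}$ in ZFC. Hence $cov^*(\mathcal{ED}_{fin})=\omega_1<\mathfrak{b}=\omega_2$ is outright impossible, and in particular in your Laver model $cov^*(\mathcal{ED}_{fin})=\omega_2$. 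More generally, the Kat\v{e}tov upper bound through $\mathcal{ED}_{fin}$ is never better than the bound $cov^*(\mathcal{NC})\leq cov^*(fin\times fin)=\mathfrak{b}$ already recorded in Theorem \ref{covering}; the whole content of Theorem \ref{cov(NC)<b} is that $cov^*(\mathcal{NC})$ drops strictly below every such bound, so no Kat\v{e}tov reduction can supply the upper bound and a small covering family of genuine $\mathcal{NC}$-sets must be produced by hand. A secondary but also fatal point: the Laver property only captures new reals in ground-model slaloms of \emph{growing} width $|\sigma(n)|\leq n$ (say), and such slaloms are not in $\mathcal{ED}_{fin}$; demanding bounded width is incompatible with adding dominating reals, so the ``delicate step'' you defer to preservation machinery is not merely delicate but false.

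The paper's route is quite different and worth internalizing: it uses the finite support iteration of Hechler forcing of length $\omega_2$ (giving $\mathfrak{b}=\omega_2$ and $\mathfrak{s}=\omega_1$), observes that each stage adds a Cohen real which, coded through a Hechler-style presentation of Cohen forcing on $\omega\times\omega$, produces a set $NC_H\in\mathcal{NC}$ meeting every ground-model infinite subset of $\omega\times\omega$ infinitely; the first $\omega_1$ of these form an $\omega$-hitting subfamily of $\mathcal{NC}$, and the key preservation fact is that finite support Hechler iterations preserve $\omega$-hitting families, so this family of size $\omega_1$ remains a witness for $cov^*(\mathcal{NC})=\omega_1$ in the final model. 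If you want to salvage your write-up, replace the $\mathcal{ED}_{fin}$ reduction and the Laver iteration by this Cohen-real construction inside the Hechler iteration, or by some other direct construction of $\omega_1$ many $\mathcal{NC}$-sets that hit all reals and are preserved as such.
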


\begin{proof}
We begin with $V$ a model of ZFC, and the model that will do is $V[G]$ where $G$ is a generic filter on $\mathbb{H}_{\omega_2}$, i.e. the finite support iteration of length $\omega_2$ of Hechler forcing. It is known that $V[G]\models\mathfrak{b}=\omega_2$ and that finite support iterations of Hechler forcing preserve $\omega$-hitting families (see \cite{baumgartner} or \cite{brendle}). It will be enough to prove that  $\mathbb{H}_{\omega_1}$ adds an $\omega$-hitting subfamily of $\mathcal{NC}$ of size $\omega_1$. 

We know that in each step of this iteration a Cohen real is added. A version of Cohen forcing (due to Hechler \cite{hechler}) is the following one: $$\mathbb{P}:=\lbrace p;\omega\times\omega\rightarrow 2\mid dom(p)=n_p \times m_p ~for~some~n_p ,~m_p <\omega\rbrace$$ where $p\leq q$ iff \begin{itemize} 

\item $p\supseteq q$ and

\item $\vert\lbrace j\in n_q \mid \mathcal{P}(j,i)=1\rbrace\vert\leq 1$ for all $i\in m_p \setminus m_q$. 
1
\end{itemize} Since $\mathbb{P}$ is an atomless countable forcing notion, it is equivalent to Cohen forcing. If $H$ is a $(V,\mathbb{P})$-generic filter for some model $M$ of $ZFC$, it follows that $NC_H :=\lbrace (j,i)\mid\exists p\in H \mathcal{P}(j,i)=1\rbrace$ is an element of $\mathcal{NC}\cap V[H]$ which splits all elements of $[\omega\times\omega]^\omega \cap V$. Therefore, after adding $\omega_1$ many Cohen reals, we get in the generic extension an $\omega$-hitting subfamily of $\mathcal{NC}$ of size $\omega_1$. 
\end{proof}


Recall that Mathias forcing is the set $$\mathbb{M}:=\lbrace(s,X)\mid s\in[\omega]^{<\omega}~X\in[\omega]^\omega \rbrace,$$ with $(t,Y)\leq (s,X)$ if $t\supseteq s$, $Y\subseteq X$ and $t\setminus s\subseteq X$. If $G\subseteq\mathbb{M}$ is a generic filter, the Mathias real added by it is $m_G =\bigcup\lbrace s\mid\exists X\in[\omega]^\omega ~(s,X)\in G\rbrace. $ 

While, as noticed in the last proof, Cohen forcing adds a new element of $\mathcal{NC}$ which hits all reals in the base model, observe that Mathias forcing adds an infinite subset of $\omega\times\omega$ which has finite intersection with all elements of $\mathcal{NC}$ in the base model. Let $ \dot f$ be the name of the enumeration of the Mathias real and take $(s,X)\in\mathbb{M}$ and $A\in\mathcal{NC}$. We know there exist $Y\in[X]^\omega$, $n\geq\vert s\vert$ and $g\in\omega^\omega$ such that $A(m)\cap Y\subseteq g(m)$, for all $m\geq n$. If necessary, extend $s$ for having $\vert s\vert =n$. Now take $Z\in[Y]^\omega$ such that for all $i<\omega$ the $i$-th element of $Z$ is strictly  greater than $g(n+i)$. Then $$(s,Z)\Vdash\forall i<\omega ~\dot f(n+i)\in Y\setminus A(n+i)$$ and therefore $(s,Z)$ forces that the intersection of $A$ and $\dot f$ is finite. 

From the inequalities of Theorem \ref{covering} a question remains open:

\begin{question}
Is is consistent that $\min\lbrace\mathfrak{b},\mathfrak{s}\rbrace<cov^* (\mathcal{NC})$?
\end{question}


We conclude the section talking about the uniformity. A family $\mathcal{R}\subseteq[\omega]^\omega$ is called a \textit{hereditarily reaping} if $\mathcal{R}\cap[X]^\omega$ is reaping in $X$, for every $X\in\mathcal{R}$. It is not hard to see that there exists a hereditarily reaping family of size $\mathfrak{r}$. 

\begin{theorem}
\label{cov(M),b<non(NC)<r}
$\max\lbrace cov(\mathcal{M}),\mathfrak{b}\rbrace\leq non^* (\mathcal{NC})\leq\mathfrak{r}$.\footnote{$cov(\mathcal{M})$ is the smallest size of a family of meager sets of the real line whose union is the whole line. Also, if $\kappa<cov(\mathcal{M})$ and $\mathcal{D}$ is a family of dense subsets of (countable) Cohen forcing, then there exists a $\mathcal{D}$-generic filter. For details see \cite{blass}.} 
\end{theorem}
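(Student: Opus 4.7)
The plan is to split the two-sided inequality into three pieces, relying on Proposition \ref{katetov relations} and Lemma \ref{cov and non inequalities} for the two Kat\v{e}tov endpoints and on a direct Cohen-real construction for the $cov(\mathcal{M})$ lower bound.

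For $\mathfrak{b}\leq non^* (\mathcal{NC})$, I would deduce the inequality from the Kat\v{e}tov-Blass relation $fin\times fin\leq_{KB}\mathcal{NC}$ via Lemma \ref{cov and non inequalities}, obtaining $non^* (fin\times fin)\leq non^* (\mathcal{NC})$, and combine this with the known equality $non^* (fin\times fin)=\mathfrak{b}$ from \cite{davidmeza}. Symmetrically, for $non^* (\mathcal{NC})\leq\mathfrak{r}$, I would apply $\mathcal{NC}\leq_{KB}\mathcal{G}_c$ and Lemma \ref{cov and non inequalities} to conclude $non^* (\mathcal{NC})\leq non^* (\mathcal{G}_c)$, and then invoke the known bound $non^* (\mathcal{G}_c)\leq\mathfrak{r}$.

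For the more delicate $cov(\mathcal{M})\leq non^* (\mathcal{NC})$, I would argue directly, recycling the countable atomless forcing $\mathbb{P}$ introduced in the proof of Theorem \ref{cov(NC)<b}, whose generic $NC_G$ always lies in $\mathcal{NC}$. Let $\mathcal{X}\subseteq[\omega\times\omega]^\omega$ with $|\mathcal{X}|<cov(\mathcal{M})$; for each $B\in\mathcal{X}$ and each $k<\omega$ set
$$D_{B,k}:=\lbrace p\in\mathbb{P}:|\lbrace(j,i)\in B\cap dom(p):p(j,i)=1\rbrace|\geq k\rbrace.$$
Each $D_{B,k}$ should be dense in $\mathbb{P}$: given $q\in\mathbb{P}$, if $B$ contains $k$ points $(j_l ,i_l)$ with $j_l \geq n_q$, extend $q$ to a $p\leq q$ covering them on new columns and set those coordinates to $1$ (no constraint applies there); otherwise $B$ is almost entirely contained in $n_q \times\omega$, so some old column $j_0 <n_q$ has $B(j_0)$ infinite, and one may extend $m_p$ past $k$ chosen elements $i_1 ,\ldots,i_k \in B(j_0)\setminus m_q$, setting $p(j_0 ,i_l)=1$ and everything else in new rows to $0$, placing exactly one $1$ per new row in the old columns, which is the maximum permitted. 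A filter meeting the $|\mathcal{X}|\cdot\omega<cov(\mathcal{M})$ many sets $D_{B,k}$ then produces $NC_G\in\mathcal{NC}$ with $|NC_G\cap B|=\omega$ for every $B\in\mathcal{X}$.

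The main obstacle is precisely this density verification: the constraint ``at most one $1$ per new row in old columns'' in the definition of $\mathbb{P}$ forbids packing several $1$'s of $B$ into the old columns within a single new row, so one must split into the two cases above in order to respect it. Once this is verified, the rest of the argument amounts to routine applications of the Kat\v{e}tov-Blass machinery of Section \ref{add_cov_non_cof}.
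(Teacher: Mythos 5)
Your Cohen-forcing argument for $cov(\mathcal{M})\leq non^* (\mathcal{NC})$ is correct and is exactly the paper's approach (the paper only asserts the existence of the dense sets; your explicit $D_{B,k}$ and the two-case density check, splitting according to whether $B$ has infinitely many points in new columns or is concentrated on an old column, fill that in properly). The other two thirds of your proof do not work as stated. The fatal one is the lower bound $\mathfrak{b}\leq non^* (\mathcal{NC})$: the equality $non^* (fin\times fin)=\mathfrak{b}$ that you invoke is false under Definition \ref{cichon}. The countable family $\lbrace\lbrace n\rbrace\times\omega\mid n<\omega\rbrace$ already witnesses $non^* (fin\times fin)=\omega$, since every $A\in fin\times fin$ has all but finitely many columns finite and hence has finite intersection with some $\lbrace n\rbrace\times\omega$. (You have most likely transposed the covering and the uniformity: it is $cov^* (fin\times fin)=\mathfrak{b}$ that \cite{davidmeza} gives, and that is what Theorem \ref{covering} uses, with the Kat\v{e}tov inequality running in the opposite direction.) So your Kat\v{e}tov route only yields the trivial $\omega\leq non^* (\mathcal{NC})$. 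The paper instead proves $\mathfrak{b}\leq non^* (\mathcal{NC})$ directly: given fewer than $\mathfrak{b}$ infinite sets, each one either concentrates on a single column or contains a partial function; the function-like ones are handled by a dominating real (giving a set below a function, which lies in $\mathcal{NC}$), and the column-concentrated ones by recursively choosing pairwise disjoint sets $C_n$ that split all the relevant traces (using $\mathfrak{b}\leq\mathfrak{r}$), so that $\coprod_{n<\omega}C_n \in\mathcal{NC}$ meets them all infinitely. Some such direct argument is unavoidable here.

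The upper bound $non^* (\mathcal{NC})\leq\mathfrak{r}$ via $non^* (\mathcal{G}_c )\leq\mathfrak{r}$ is at best incomplete: you cite $non^* (\mathcal{G}_c )\leq\mathfrak{r}$ as a known bound, but it is neither proved in the paper nor a standard off-the-shelf fact one can simply quote; bounds on Ramsey-type ideals of this kind typically involve partition-reaping variants of $\mathfrak{r}$ whose identification with $\mathfrak{r}$ is delicate. The direction of Lemma \ref{cov and non inequalities} is right ($\mathcal{NC}\leq_{K}\mathcal{G}_c$ does give $non^* (\mathcal{NC})\leq non^* (\mathcal{G}_c )$), and a proof of $non^* (\mathcal{G}_c )\leq\mathfrak{r}$ can indeed be extracted, but only by running essentially the same construction the paper performs directly on $\mathcal{NC}$: take a hereditarily reaping family $\mathcal{R}$ of size $\mathfrak{r}$, use the witnesses $\lbrace n\rbrace\times X$ for $n<\omega$, $X\in\mathcal{R}$, and recursively build a decreasing sequence $X_n \in\mathcal{R}$ with $X_n \subseteq^* A(n)$ to show that any $A$ meeting all of them infinitely has centered columns and so is $\mathcal{NC}$-positive. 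You should either supply that argument (for $\mathcal{NC}$ directly, which is simpler) or a precise reference for the claimed bound on $\mathcal{G}_c$.
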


\begin{proof}
First we will prove that $cov(\mathcal{M})\leq non^* (\mathcal{NC})$. Take $\mathcal{F}\subseteq[\omega\times\omega]^\omega $ of size $\kappa<cov(\mathcal{M})$. Consider the version of Cohen forcing $\mathbb{P}$ defined in the proof of Theorem \ref{cov(NC)<b}. Clearly there exists $\mathcal{D}:=\lbrace\mathcal{D}_\alpha \mid\alpha<\kappa\rbrace$, a family of dense subsets of $\mathbb{P}$, such that if $H$ is a $\mathcal{D}$-generic filter, then $NC_H$ infinitely intersects any element of $\mathcal{F}$. Since $\kappa<cov(\mathcal{M})$, there exists such a filter. 

Now suppose that $\kappa<\mathfrak{b}$ and that $\mathcal{F}:=\lbrace A_\alpha \mid\alpha<\kappa\rbrace$ is a family of infinite subsets of $\omega\times\omega$. For all $A\in[\omega\times\omega]^\omega$ there exists either $n<\omega$ such that $\vert(\lbrace n\rbrace\times\omega)\cap A\vert=\omega$ or $f\in\omega^\omega$ such that $\vert A\cap f\vert=\omega$. Therefore w.l.o.g. there exists a partition $K_0 \cup K_1$ of $\kappa$, such that \begin{itemize} \item for all $\alpha\in K_0$ there exists $n_\alpha <\omega$ such that $A_\alpha\subseteq\lbrace n_\alpha \rbrace\times\omega$ and
\item for all $\alpha\in K_1$ the set $A_\alpha$ is a partial function from $\omega$ to $\omega$. 
\end{itemize}  Since $\kappa<\mathfrak{b}$, there exists $f\in\omega^\omega$ such that $A_\alpha \leq^* f$, for all $\alpha\in K_1$. Therefore $B_1 :=\lbrace (n,m)\mid m\leq f(n)\rbrace$ is an element of $\mathcal{NC}$ such that $B_1 \cap A_\alpha$ is infinite, for all $\alpha\in K_1$. Recall that $\mathfrak{b}\leq\mathfrak{r}$. The set $\lbrace A_\alpha(n_\alpha )\mid\alpha\in K_0\rbrace$ is split by some real $C_0 \in[\omega]^\omega$. Suppose that for some $n<\omega$ we have defined a disjoint family $C_0 , ..., C_n \in[\omega]^\omega$ such that $D_n :=\bigcup_{i\leq n}C_i $ splits $A_\alpha (n_\alpha )$, for all $\alpha\in K_0$.  The family $$\lbrace A_\alpha (n_\alpha )\setminus D_n \mid\alpha\in K_0 \rbrace$$ is split by some $C_{n+1}\in[\omega\setminus D_n]^\omega$. It follows that both $\bigcup_{i\leq n+1} C_i$ and $C_{n+1}$ split $A_\alpha (n_\alpha )$, for all $\alpha\in K_0$. Thus we can recursively construct a disjoint family $\lbrace C_n \mid n<\omega\rbrace\subseteq[\omega]^\omega$ such that $C_n$ splits $A_\alpha (n_\alpha )$, for all $n<\omega$ and all $\alpha\in K_0$. Take $$B_0 :=\coprod_{n<\omega}C_n ,$$ which is an element of $\mathcal{NC}$, and $\alpha\in K_0$.  Since $C_{n_\alpha}$ splits $A_{\alpha}(n_\alpha )$, it follows that $B_0 \cap A_\alpha$ is infinite. Therefore $B:=B_0 \cup B_1$ is an element of $\mathcal{NC}$ such that $B\cap A_\alpha$ is infinite, for all $\alpha<\kappa$. We conclude that $\mathfrak{b}\leq non^* (\mathcal{NC})$. 

Now we prove that $non^* (\mathcal{NC})\leq\mathfrak{r}$. Let $\mathcal{R}$ be a hereditarily reaping family of size $\mathfrak{r}$. We claim that $$\lbrace\lbrace n\rbrace\times X\mid X\in\mathcal{R},~n<\omega\rbrace$$ witnesses that $non^* (\mathcal{NC})\leq\mathfrak{r}$. Suppose that $A\in[\omega\times\omega]^\omega$ is such that $A\cap(\lbrace n\rbrace\times X)$ is infinite, for all $n<\omega$ and all $X\in\mathcal{R}$. Take $X \in\mathcal{R}$. Since $A\cap(\lbrace 0\rbrace\times X)$ is infinite there exists $X_0 \in[X]^\omega \cap\mathcal{R}$ such that $X_0 \subseteq^* A(0)$ or $X_0 \cap A(0) $ is finite. Since this last case cannot happen, we conclude that $X_0 \subseteq^* A(0)$. Suppose that for some $n<\omega$ we have defined a decreasing family $X_0 ,..., X_n$ of elements of $\mathcal{R}$ such that $X_{i}\subseteq^* A(i) $, for all $i\leq n$. Since $A\cap(\lbrace n+1\rbrace\times X_n )$ is infinite, there exists $X_{n+1} \in[X_n ]^\omega \cap\mathcal{R}$ such that $X_{n+1} \subseteq^* A(n+1)$ (for the case when $X_{n+1}\cap A(n+1)$ is finite cannot happen). So we can recursively construct a decreasing subfamily of $\mathcal{R}$ which witnesses that $\lbrace A(n)\mid n<\omega\rbrace$ is a centered family and that $A\notin\mathcal{NC}$. We conclude that $non^* (\mathcal{NC})\leq\mathfrak{r}$.

\end{proof}





In the realm of consistency we have the following result. 

\begin{theorem}
It is consistent that $\max\lbrace cov(\mathcal{M}),\mathfrak{b}\rbrace<non^* (\mathcal{NC}).$
\end{theorem}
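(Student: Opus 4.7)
The plan is to construct a model of ZFC satisfying $cov(\mathcal{M}) = \mathfrak{b} = \omega_1 < \omega_2 = non^*(\mathcal{NC})$, starting from a ground model $V$ of CH. I would perform a countable support iteration $\langle \mathbb{P}_\alpha, \dot{\mathbb{Q}}_\alpha : \alpha < \omega_2 \rangle$ of proper forcings, where each iterand $\dot{\mathbb{Q}}_\alpha$ has two properties: it enjoys the Laver property (so it adds neither Cohen reals nor dominating reals), and it adds a new real $\dot{A}_\alpha \in \mathcal{NC}$ meeting every $B \in [\omega\times\omega]^\omega \cap V^{\mathbb{P}_\alpha}$ in an infinite set.

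For the iterand itself I would use a Mathias--Prikry-style forcing with respect to a filter $\dot{\mathcal{F}}_\alpha$ defined in $V^{\mathbb{P}_\alpha}$. Using CH in the intermediate model, I enumerate $[\omega\times\omega]^\omega \cap V^{\mathbb{P}_\alpha}$ as $\{B_\xi : \xi<\omega_1\}$ and build $\dot{\mathcal{F}}_\alpha$ by diagonalization: recursively pick, for each $\xi$, a pairwise disjoint family $\lbrace Y^\xi_n : n<\omega\rbrace$ such that $Y^\xi_n \cap B_\xi$ is infinite for infinitely many $n$, and include the corresponding set $\coprod_n Y^\xi_n$, which lies in $\mathcal{NC}$ (by pairwise disjointness of columns). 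The generic pseudointersection $\dot{A}_\alpha$ inherits the pairwise-disjoint-column structure, so $\dot{A}_\alpha \in \mathcal{NC}$, and by genericity it meets each $B_\xi$ infinitely.

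By Shelah's preservation theorem, the Laver property is preserved under countable support iteration of proper forcings; hence $V^{\mathbb{P}_{\omega_2}}$ has the Laver property over $V$. This yields both $cov(\mathcal{M}) = \omega_1$ (since no Cohen real is added over $V$) and $\mathfrak{b} = \omega_1$ (since the Laver property implies $\omega^\omega$-bounding). By a standard bookkeeping, every family $\mathcal{X} \subseteq [\omega\times\omega]^\omega$ of size $\omega_1$ in the final model appears in some intermediate $V^{\mathbb{P}_\alpha}$, and is therefore hit by the corresponding $A_\alpha \in \mathcal{NC}$. This gives $non^*(\mathcal{NC}) \geq \omega_2$, and combined with the previous we obtain strict inequality as required.

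The main obstacle is ensuring that $\dot{\mathbb{Q}}_\alpha$ has the Laver property while still adding the $\mathcal{NC}$-hitting real; generic Mathias--Prikry forcings need not satisfy it. The filter $\dot{\mathcal{F}}_\alpha$ must therefore be shaped to be $P^+$-like (or a canonical $\sigma$-filter whose elements have some uniform column-by-column boundedness), in such a way that any new real added by the forcing is captured by a ground-model slalom. Verifying this delicate balance---between the filter being rich enough to hit every $B_\xi$ and being combinatorially tame enough for the Laver property---is the technical heart of the argument and parallels the standard constructions used to obtain consistency results separating $non^*(\mathcal{I})$ from $\mathfrak{b}$ and $cov(\mathcal{M})$ for other definable ideals.
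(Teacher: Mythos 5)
Your overall strategy---a countable support iteration whose iterands add $\mathcal{NC}$-sets hitting all old infinite subsets of $\omega\times\omega$, while preserving smallness of $cov(\mathcal{M})$ and $\mathfrak{b}$---is a legitimate blueprint, and the target constellation is indeed consistent. But the specific iterand you propose cannot do its job, and one of your preservation claims is false.

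The fatal problem is that a Mathias--Prikry generic over a filter $\mathcal{F}$ is almost contained in every element of $\mathcal{F}$. Each generator $\coprod_n Y^\xi_n$ you place in $\dot{\mathcal{F}}_\alpha$ is a co-infinite subset of $\omega\times\omega$ belonging to $V^{\mathbb{P}_\alpha}$, so its complement contains an infinite set $B$ of that model, and the generic $\dot A_\alpha$ meets that $B$ only finitely. Hence $\dot A_\alpha$ cannot meet \emph{every} $B\in[\omega\times\omega]^\omega\cap V^{\mathbb{P}_\alpha}$ infinitely, which is exactly what your bookkeeping requires; the witness must be a ``spread out'' generic (Cohen-like, or an infinitely-often-equal/slalom-type real), not a pseudointersection of any filter from the intermediate model. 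Separately, the Laver property does not imply $\omega^\omega$-bounding and does not exclude dominating reals---Laver forcing itself has the Laver property and adds a dominating real---so your justification of $\mathfrak{b}=\omega_1$ fails as written; you would need the Sacks property or an independent preservation argument for unboundedness. Since you also explicitly defer the ``technical heart'' (existence of an iterand with both features), the proposal remains a plan rather than a proof. For comparison, the paper's argument is a short reduction: by Proposition \ref{katetov relations} we have $\mathcal{ED}_{fin}\leq_{KB}\mathcal{NC}$, hence $non^*(\mathcal{ED}_{fin})\leq non^*(\mathcal{NC})$ by Lemma \ref{cov and non inequalities}; citing a known model with $cof(\mathcal{M})=\omega_1<non^*(\mathcal{ED}_{fin})$ and using $cov(\mathcal{M}),\mathfrak{b}\leq cof(\mathcal{M})$ finishes the proof. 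That model also shows a corrected version of your construction is not hopeless, but the version you wrote does not reach it.
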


\begin{proof}
In Theorem 1.6.12 of \cite{davidmeza} we have a model of $cof(\mathcal{M})=\omega_1$\footnote{$cof(\mathcal{M})$ is the smallest size of a family $\mathcal{F}$ of meager subsets of the real line such that for all meager set $X$ there exists $Y\in\mathcal{F}$ such that $X\subseteq Y$.} and $non^* (\mathcal{ED}_{fin})>\omega_1$. From Proposition \ref{katetov relations} it follows that in this model also $non^* (\mathcal{NC})>\omega_1$ holds. Since $cov(\mathcal{M}),\mathfrak{b}\leq cof(\mathcal{M})$, the inequality holds in this model. 
\end{proof}

Concerning the uniformity of the Nowhere Centered ideal a question remains open: 

\begin{question}
Is it consistent that $non^* (\mathcal{NC})<\mathfrak{r}$?
\end{question}

\end{document}